\begin{document}
\title{Resolutions of letterplace ideals of posets}

\author{Alessio D'Al{\`i}}
\address{Dipartimento di Matematica\\
         Universit{\`a} degli Studi di Genova\\
         Via Dodecaneso 35\\
         16146 Genova\\
         Italy}
\email{dali@dima.unige.it}

\author{Gunnar Fl{\o}ystad}
\address{Universitetet i Bergen\\ 
         Matematisk institutt \\
        Postboks 7803\\
        5020 Bergen \\
        Norway} 
\email{gunnar@mi.uib.no}

\author{Amin Nematbakhsh}
\address{School of Mathematics\\
         Institute for Research in Fundamental Sciences (IPM)\\
         P.O. Box: 19395-5746\\
         Tehran\\
         Iran}
\email{nematbakhsh@ipm.ir}


\subjclass[2010]{Primary: 13D02, 05E40, 06A11}
\date{\today}

\begin{abstract}
We investigate resolutions of letterplace ideals of posets.
We develop topological
results to compute their multigraded Betti numbers, and to give structural
results on these Betti numbers. If the poset is a union of no
more than $c$ chains, we show that the Betti numbers may be
computed from simplicial complexes of no more than $c$ vertices.
We also give a recursive procedure to compute the Betti diagrams when
the Hasse diagram of $P$ has tree structure.
\end{abstract}

\maketitle


\theoremstyle{plain}
\newtheorem{theorem}{Theorem}[section]
\newtheorem{corollary}[theorem]{Corollary}
\newtheorem*{main}{Main Theorem}
\newtheorem{lemma}[theorem]{Lemma}
\newtheorem{proposition}[theorem]{Proposition}

\theoremstyle{definition}
\newtheorem{definition}[theorem]{Definition}
\newtheorem{fact}{Fact}

\theoremstyle{remark}
\newtheorem{notation}[theorem]{Notation}
\newtheorem{remark}[theorem]{Remark}
\newtheorem{example}[theorem]{Example}
\newtheorem{claim}{Claim}


\newcommand{\psp}[1]{{{\bf P}^{#1}}}
\newcommand{\psr}[1]{{\bf P}(#1)}
\newcommand{\op}{{\mathcal O}}
\newcommand{\opw}{\op_{\psr{W}}}
\newcommand{\go}{\op}

\newcommand{\ini}[1]{\text{in}(#1)}
\newcommand{\gin}[1]{\text{gin}(#1)}
\newcommand{\kr}{{\Bbbk}}
\newcommand{\pd}{\partial}
\newcommand{\vardel}{\partial}
\renewcommand{\tt}{{\bf t}}


\newcommand{\coh}{{{\text{{\rm coh}}}}}


\newcommand{\modv}[1]{{#1}\text{-{mod}}}
\newcommand{\modstab}[1]{{#1}-\underline{\text{mod}}}

\newcommand{\sut}{{}^{\tau}}
\newcommand{\sumit}{{}^{-\tau}}
\newcommand{\til}{\thicksim}

\newcommand{\totp}{\text{Tot}^{\prod}}
\newcommand{\dsum}{\bigoplus}
\newcommand{\dprod}{\prod}
\newcommand{\lsum}{\oplus}
\newcommand{\lprod}{\Pi}

\newcommand{\La}{{\Lambda}}

\newcommand{\sirstj}{\circledast}

\newcommand{\she}{\EuScript{S}\text{h}}
\newcommand{\cm}{\EuScript{CM}}
\newcommand{\cmd}{\EuScript{CM}^\dagger}
\newcommand{\cmri}{\EuScript{CM}^\circ}
\newcommand{\cler}{\EuScript{CL}}
\newcommand{\clerd}{\EuScript{CL}^\dagger}
\newcommand{\clerri}{\EuScript{CL}^\circ}
\newcommand{\gor}{\EuScript{G}}
\newcommand{\gF}{\mathcal{F}}
\newcommand{\gG}{\mathcal{G}}
\newcommand{\gM}{\mathcal{M}}
\newcommand{\gE}{\mathcal{E}}
\newcommand{\gD}{\mathcal{D}}
\newcommand{\gI}{\mathcal{I}}
\newcommand{\gP}{\mathcal{P}}
\newcommand{\gK}{\mathcal{K}}
\newcommand{\gL}{\mathcal{L}}
\newcommand{\gS}{\mathcal{S}}
\newcommand{\gC}{\mathcal{C}}
\newcommand{\gO}{\mathcal{O}}
\newcommand{\gJ}{\mathcal{J}}
\newcommand{\gU}{\mathcal{U}}
\newcommand{\mm}{\mathfrak{m}}
\newcommand{\cP}{\mathcal P}
\newcommand{\cS}{\mathcal S}
\newcommand{\St}{\mathcal B}
\newcommand{\ovgL}{\overline{\gL}}
\newcommand{\ovS}{\overline{S}}

\newcommand{\dlim} {\varinjlim}
\newcommand{\ilim} {\varprojlim}

\newcommand{\CM}{\text{CM}}
\newcommand{\Mon}{\text{Mon}}


\newcommand{\Kom}{\text{Kom}}


\newcommand{\EH}{{\mathbf H}}
\newcommand{\res}{\text{res}}
\newcommand{\Hom}{\text{Hom}}
\newcommand{\inhom}{{\underline{\text{Hom}}}}
\newcommand{\Ext}{\text{Ext}}
\newcommand{\Tor}{\text{Tor}}
\newcommand{\ghom}{\mathcal{H}om}
\newcommand{\gext}{\mathcal{E}xt}
\newcommand{\id}{\text{{id}}}
\newcommand{\im}{\text{im}\,}
\newcommand{\codim} {\text{codim}\,}
\newcommand{\resol}{\text{resol}\,}
\newcommand{\rank}{\text{rank}\,}
\newcommand{\lpd}{\text{lpd}\,}
\newcommand{\coker}{\text{coker}\,}
\newcommand{\supp}{\text{supp}\,}
\newcommand{\Ad}{A_\cdot}
\newcommand{\Bd}{B_\cdot}
\newcommand{\Fd}{F_\cdot}
\newcommand{\Gd}{G_\cdot}


\newcommand{\sus}{\subseteq}
\newcommand{\sups}{\supseteq}
\newcommand{\pil}{\rightarrow}
\newcommand{\vpil}{\leftarrow}
\newcommand{\rpil}{\leftarrow}
\newcommand{\lpil}{\longrightarrow}
\newcommand{\inpil}{\hookrightarrow}
\newcommand{\pils}{\twoheadrightarrow}
\newcommand{\projpil}{\dashrightarrow}
\newcommand{\dotpil}{\dashrightarrow}
\newcommand{\adj}[2]{\overset{#1}{\underset{#2}{\rightleftarrows}}}
\newcommand{\mto}[1]{\stackrel{#1}\longrightarrow}
\newcommand{\vmto}[1]{\stackrel{#1}\longleftarrow}
\newcommand{\mtoelm}[1]{\stackrel{#1}\mapsto}

\newcommand{\eqv}{\Leftrightarrow}
\newcommand{\impl}{\Rightarrow}

\newcommand{\iso}{\cong}
\newcommand{\te}{\otimes}
\newcommand{\into}[1]{\hookrightarrow{#1}}
\newcommand{\ekv}{\Leftrightarrow}
\newcommand{\equi}{\simeq}
\newcommand{\isopil}{\overset{\cong}{\lpil}}
\newcommand{\equipil}{\overset{\equi}{\lpil}}
\newcommand{\ispil}{\isopil}
\newcommand{\vvi}{\langle}
\newcommand{\hvi}{\rangle}
\newcommand{\susneq}{\subsetneq}
\newcommand{\sgn}{\text{sign}}
\newcommand{\bihom}[2]{\overset{#1}{\underset{#2}{\rightleftarrows}}}


\newcommand{\xd}{\check{x}}
\newcommand{\ortog}{\bot}
\newcommand{\tL}{\tilde{L}}
\newcommand{\tM}{\tilde{M}}
\newcommand{\tH}{\tilde{H}}
\newcommand{\tC}{\tilde{C}}
\newcommand{\tvH}{\widetilde{H}}
\newcommand{\tvh}{\widetilde{h}}
\newcommand{\tV}{\tilde{V}}
\newcommand{\tS}{\tilde{S}}
\newcommand{\tT}{\tilde{T}}
\newcommand{\tR}{\tilde{R}}
\newcommand{\tf}{\tilde{f}}
\newcommand{\ts}{\tilde{s}}
\newcommand{\tp}{\tilde{p}}
\newcommand{\tr}{\tilde{r}}
\newcommand{\tfst}{\tilde{f}_*}
\newcommand{\empt}{\emptyset}
\newcommand{\bfa}{{\bf a}}
\newcommand{\bfb}{{\bf b}}
\newcommand{\bfd}{{\bf d}}
\newcommand{\bfl}{{\bf \ell}}
\newcommand{\la}{\lambda}
\newcommand{\bfen}{{\mathbf 1}}
\newcommand{\ep}{\epsilon}
\newcommand{\en}{r}
\newcommand{\tu}{s}

\newcommand{\ome}{\omega_E}

\newcommand{\bevis}{{\bf Proof. }}
\newcommand{\demofin}{\qed \vskip 3.5mm}
\newcommand{\nyp}[1]{\noindent {\bf (#1)}}
\newcommand{\demo}{{\it Proof. }}
\newcommand{\demodone}{\demofin}
\newcommand{\parg}{{\vskip 2mm \addtocounter{theorem}{1}  
                   \noindent {\bf \thetheorem .} \hskip 1.5mm }}

\newcommand{\lcm}{{\text{lcm}}}


\newcommand{\dl}{\Delta}
\newcommand{\cdel}{{C\Delta}}
\newcommand{\cdelp}{{C\Delta^{\prime}}}
\newcommand{\dlst}{\Delta^*}
\newcommand{\Sdl}{{\mathcal S}_{\dl}}
\newcommand{\lk}{\text{lk}}
\newcommand{\lkd}{\lk_\Delta}
\newcommand{\lkp}[2]{\lk_{#1} {#2}}
\newcommand{\del}{\Delta}
\newcommand{\delr}{\Delta_{-R}}
\newcommand{\dd}{{\dim \del}}
\newcommand{\Del}{\Delta}
\newcommand{\PA}{\cS}
\newcommand{\Stbs}{\St \backslash \St_0}

\renewcommand{\aa}{{\bf a}}
\newcommand{\bb}{{\bf b}}
\newcommand{\cc}{{\bf c}}
\newcommand{\xx}{{\bf x}}
\newcommand{\yy}{{\bf y}}
\newcommand{\zz}{{\bf z}}
\newcommand{\mv}{{\xx^{\aa_v}}}
\newcommand{\mF}{{\xx^{\aa_F}}}

\newcommand{\Symm}{\text{Sym}}
\newcommand{\pnm}{{\bf P}^{n-1}}
\newcommand{\opnm}{{\go_{\pnm}}}
\newcommand{\ompnm}{\omega_{\pnm}}

\newcommand{\pn}{{\bf P}^n}
\newcommand{\hele}{{\mathbb Z}}
\newcommand{\nat}{{\mathbb N}}
\newcommand{\rasj}{{\mathbb Q}}

\newcommand{\dt}{\bullet}
\newcommand{\st}{\hskip 0.5mm {}^{\rule{0.4pt}{1.5mm}}}              
\newcommand{\disk}{\scriptscriptstyle{\bullet}}

\newcommand{\cF}{F_\dt}
\newcommand{\pol}{f}

\newcommand{\Rn}{{\mathbb R}^n}
\newcommand{\An}{{\mathbb A}^n}
\newcommand{\frg}{\mathfrak{g}}
\newcommand{\PW}{{\mathbb P}(W)}

\newcommand{\pos}{{\mathcal Pos}}
\newcommand{\g}{{\gamma}}

\newcommand{\Vaa}{V_0}
\newcommand{\Bp}{B^\prime}
\newcommand{\Bpp}{B^{\prime \prime}}
\newcommand{\bbp}{\mathbf{b}^\prime}
\newcommand{\bbpp}{\mathbf{b}^{\prime \prime}}
\newcommand{\bp}{{b}^\prime}
\newcommand{\bpp}{{b}^{\prime \prime}}
\newcommand{\pb}{\overline{p}}
\newcommand{\Pa}{P \backslash \{a \}}
\newcommand{\Min}{\text{Min}}
\newcommand{\Max}{\text{Max}}
\newcommand{\inc}{\text{inc}}
\newcommand{\tih}{\tilde{h}}

\newcommand{\comment}[1]{{\color{blue} \sf ($\clubsuit$ #1 $\clubsuit$)}}

\def\CC{{\mathbb C}}
\def\GG{{\mathbb G}}
\def\ZZ{{\mathbb Z}}
\def\NN{{\mathbb N}}
\def\RR{{\mathbb R}}
\def\OO{{\mathbb O}}
\def\QQ{{\mathbb Q}}
\def\VV{{\mathbb V}}
\def\PP{{\mathbb P}}
\def\EE{{\mathbb E}}
\def\FF{{\mathbb F}}
\def\AA{{\mathbb A}}

\section*{Introduction}
Letterplace and co-letterplace ideals of a partially ordered set $P$
were introduced and studied in \cite{EHM} and \cite{FGH}.
In the latter paper it was shown that many monomial ideals 
studied in the literature derive from letterplace or co-letterplace
ideals as quotients of these ideals by a regular sequence of variable
differences. These ideals therefore allow a powerful unifying treatment
of many classes of ideals. In this article we: 
\begin{itemize}
\item Develop combinatorial topological results on the homology 
of simplicial  complexes, in particular those associated with bipartite
edge ideals, Section \ref{sec:Top},
\item Use this to compute and give structural results on the multigraded
Betti numbers of resolutions of letterplace ideals, Sections \ref{sec:Betti}
and \ref{sec:Trees}.
\end{itemize}

Given a poset $P$, the $n$'th letterplace ideal $L(n,P)$ is the
monomial ideal generated by monomials
\[ x_{1,p_1}x_{2,p_2} \cdots x_{n,p_n} \]
where $p_1 \leq p_2 \leq \cdots \leq p_n$.
The $n$'th co-letterplace ideal $L(P,n)$ is the monomial ideal generated
by monomials
\[ \Pi_{p \in P} x_{p,i_p}, \]
where $1 \leq i_p \leq n$ and $p < q$ implies $i_p \leq i_q$.

\medskip 
The variables are here $x_{i,p}$ where $(i,p) \in [n] \times P$.
Given a multidegree $R \sus [n] \times P$ we let $\{i \} \times R_i$ be 
the intersection
$R \cap (\{i\} \times P)$, giving subsets $R_1, \ldots, R_n$
of $P$. Each pair $R_i$ and $R_{i+1}$ naturally defines 
a bipartite graph whose edges are $(p_i, p_{i+1}) \in R_i \times R_{i+1}$
where $p_i \leq p_{i+1}$.

\medskip The generators of $L(2,P)$, which give us the graded Betti
numbers in homological degree zero, correspond to pairs of elements
$p_1 \leq p_2$ in the poset $P$. 
The higher Betti numbers of $L(2,P)$ turn out to be 
topological invariants associated with pairs of subsets 
$R_1,R_2$ of the poset $P$ where $R_1 \leq R_2$ for a certain ordering $\leq$. 
More generally the Betti numbers of 
$L(n,P)$ are topological invariants associated with sequences of subsets 
$R_1 \leq R_2 \leq  \cdots \leq R_n$ of the poset $P$.
We show that the Betti numbers of $L(n,P)$ may be
computed very nicely from those of $L(2,P)$. 
We describe in detail the extremal aspects of the
Betti table of $L(n,P)$, like the multigraded Betti numbers
in the first and last linear strands, and in the highest homological 
degree. We also show
that, if $P$ is a union of no more than $c$ chains, then all the
multigraded Betti numbers of $L(n,P)$ may be computed as the homologies of 
simplicial
complexes with $\leq c$ vertices, which can greatly reduce
the task of computing Betti numbers.
When the Hasse diagram of $P$ has
a rooted tree structure we describe completely all multigraded Betti
numbers, and give a simple inductive procedure for 
computing the Betti diagram.

\medskip
To obtain the above we first develop general topological 
results simplifying the computations
of the homology of various simplicial  complexes defined  by i) edge ideals
of bipartite graphs, and ii) more general ideals whose monomials
are given by paths between successive hubs of vertices, in our case
the $R_i$'s.

\medskip
We mention that in the paper \cite{DFN-COLP} we compute resolutions
of co-letterplace ideals. This can be given by a completely explicit
and simple form. On the other hand resolutions of letterplace ideals,
as we investigate here, are much more subtle and rely on the intricate
topological behaviour of $P$.

\medskip
The organization  of the paper is as follows. 
Section 1 defines and gives basic properties of letterplace ideals of posets.
Section 2 develops topological results on the homology of simplicial complexes.
In Section 3 we give the computational and 
structural results on the multigraded
Betti numbers of letterplace ideals, and in Section 4 we
give the recursive procedure for computing Betti diagrams when the
Hasse diagram of $P$ has a tree structure.

\section{Letterplace ideals of posets}
\label{Sec:Pre}
We recall the definitions of the above monomial ideals associated with a poset $P$
and a natural number $n$, and give basic properties
of these ideals which we will use. These ideals were introduced in 
\cite{EHM} and \cite{FGH}.

\medskip 
Let $\kr$ be a field.
If $R$ is a set denote by $\kr[x_R]$ the polynomial ring 
$\kr[x_i]_{i \in R}$, and if $S \sus R$ denote by $m_S$ the squarefree
monomial $\Pi_{i \in S} x_i$.

If $P$ and $Q$ are finite posets, we denote by $\Hom(P,Q)$ the set
of isotone maps $\phi\colon P \pil Q$, i.e. maps such that $p \leq p^\prime$ implies
$\phi(p) \leq \phi(p^\prime)$.  For such a $\phi$ its graph is
\[ \Gamma \phi = \{ (p,\phi(p)) \, | \, p \in P \} \sus P \times Q. \]
Let $L(P,Q)$ be the ideal in $\kr[x_{P \times Q}]$ generated by
the monomials $m_{\Gamma \phi}$ where $\phi \in \Hom(P,Q)$. 

Let $[n] = \{1 < 2 < \cdots < n \}$ be the totally ordered set on
$n$ elements. The ideal $L([n],P)$ in $\kr[x_{[n] \times P}]$ is the
{\it $n$'th letterplace ideal} of $P$ and the ideal $L(P,[n])$ in
$\kr[x_{P \times [n]}]$ is the {\it $n$'th
co-letterplace ideal} of $P$. For short, we write these ideals as $L(n,P)$ and
$L(P,n)$ respectively.

\begin{fact} 
Let $I$ be a homogeneous Cohen-Macaulay ideal of codimension $c$ generated 
in degree $n$ in a polynomial ring. Let $e(I)$ be the multiplicity
of $I$ (i.e. 
the degree of the projective scheme defined by $I$). Then
\begin{equation} \label{eq:LpCoLpMult}
\binom{n+c-1}{c} \leq e(I) \leq n^c. 
\end{equation}
This follows easily by taking an artinian reduction of the ideal $I$.
\end{fact}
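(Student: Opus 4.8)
The plan is to reduce both inequalities, via an artinian reduction, to elementary facts about an ideal generated in a single degree $n$ inside a polynomial ring in exactly $c$ variables. Write $I \sus R = \kr[x_1, \dots, x_N]$. Multiplicity, codimension, Cohen--Macaulayness, and the set of degrees of a minimal generating system are all unchanged under extension of the ground field, so we may assume $\kr$ infinite. Then $R/I$ has dimension $N-c$ and admits a homogeneous system of parameters $\ell_1, \dots, \ell_{N-c}$ consisting of linearly independent general linear forms, which we complete to a basis of $R_1$; since $R/I$ is Cohen--Macaulay these form a regular sequence on $R/I$ and generate a reduction of its maximal graded ideal. Put $S := R/(\ell_1, \dots, \ell_{N-c})$, a polynomial ring in $c$ variables with graded maximal ideal $\mathfrak{m}$, and let $J \sus S$ be the image of $I$. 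Then $A := S/J$ is artinian with $\codim J = c$, and $e(I) = \mathrm{length}(A) = \dim_{\kr} A$. Finally, the minimal generators of $I$ all have degree $n$, so their images generate $J$ in degree $n$; in particular $J \sus \mathfrak{m}^{n}$.

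For the lower bound, the inclusion $J \sus \mathfrak{m}^{n}$ yields a graded surjection $A = S/J \twoheadrightarrow S/\mathfrak{m}^{n}$, whence
\[
 e(I) \;=\; \dim_{\kr} A \;\geq\; \dim_{\kr} S/\mathfrak{m}^{n} \;=\; \sum_{k=0}^{n-1}\binom{k+c-1}{c-1} \;=\; \binom{n+c-1}{c}.
\]
For the upper bound I would exhibit a regular sequence $f_{1}, \dots, f_{c}$ lying in the degree-$n$ component $J_{n}$ and argue that $S/(f_{1}, \dots, f_{c})$ surjects onto $A$. One builds such a sequence inductively: given $f_{1}, \dots, f_{i-1} \in J_{n}$ regular with $i-1 < c$, the ideal $(f_{1}, \dots, f_{i-1})$ has codimension $i-1 < c = \codim J$, so $J$ --- hence, being generated in degree $n$, the finite-dimensional space $J_{n}$ --- is contained in none of the finitely many minimal primes of $(f_{1}, \dots, f_{i-1})$; vector-space prime avoidance over the infinite field $\kr$ produces $f_{i} \in J_{n}$ avoiding all of them, and $f_{1}, \dots, f_{i}$ is again a regular sequence since $S$ is Cohen--Macaulay and the codimension of the ideal has risen by one. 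With $f_{1}, \dots, f_{c} \in J_{n}$ a regular sequence, $S/(f_{1}, \dots, f_{c})$ is a graded complete intersection of $c$ forms of degree $n$ in $c$ variables, with Hilbert series $(1 + t + \cdots + t^{n-1})^{c}$; evaluating at $t = 1$ gives $\dim_{\kr} S/(f_{1}, \dots, f_{c}) = n^{c}$. Since $(f_{1}, \dots, f_{c}) \sus J$, we conclude $e(I) = \dim_{\kr} A \leq n^{c}$.

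The one delicate point is placing the regular sequence inside the single graded component $J_{n}$ rather than merely inside $J$; this is exactly where the hypothesis that $I$ is generated in one degree, together with the passage to an infinite ground field for the prime-avoidance step, are used. The remaining ingredients --- that the multiplicity of a Cohen--Macaulay graded algebra equals the $\kr$-length of its artinian reduction, and the Hilbert series of a graded complete intersection --- are standard.
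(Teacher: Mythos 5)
Your argument is correct and is exactly the route the paper has in mind: the paper only offers the one-line sketch ``take an artinian reduction,'' and your write-up fills in that sketch in the standard way (length of the artinian reduction equals the multiplicity, the containment $J\sus\mathfrak{m}^n$ for the lower bound, and a degree-$n$ regular sequence inside $J$ giving a complete intersection of colength $n^c$ for the upper bound). No gaps; the passage to an infinite field and the use of single-degree generation in the prime-avoidance step are precisely the points that needed care.
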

 The following properties hold for letterplace ideals:

\medskip
\noindent 1. $L(n,P)$ is a Cohen-Macaulay ideal of codimension equal to 
the cardinality $|P|$ by \cite[Corollary 2.5]{EHM}, see also 
\cite[Corollary 2.4]{FGH}. By \cite[Cor.3.3]{EHM} 
its regularity is $c(n-1) + 1$ where $c$ is the 
maximal cardinality of an antichain in $P$. 

\medskip
\noindent 2. The multiplicity of $L(n,P)$ is the cardinality $|\Hom(P,[n])|$.
This is a consequence of 4. below, since the facets of a simplicial
complex defined by a squarefree monomial ideal correspond one to one to the generators of the Alexander dual ideal.

\medskip
\noindent 3. The upper bound in (\ref{eq:LpCoLpMult}) 
is attained for the letterplace ideal 
$L(n,\underline{c})$ where
$\underline{c}$ is the antichain on $c$ elements, an ideal which is 
a complete intersection.
The lower bound is attained for the letterplace ideal $L(n,[c])$. 
(This letterplace ideal is an initial  ideal of the ideal generated by the
maximal minors of an $n \times (n+c-1)$ matrix
of generic linear forms, see \cite[Section 3]{FGH}.)
Thus as $P$ varies over posets of cardinality $c$, 
the letterplace ideals $L(n,P)$ may be seen as giving Cohen-Macaulay ideals
of codimension $c$ generated in degree $n$ interpolating between these
extreme cases.

\medskip
\noindent 4.
The ideals $L(n,P)$ and $L(P,n)$ are Alexander dual ideals
by \cite[Theorem 1.1]{EHM}, see also \cite[Prop. 1.2]{FGH}.

\medskip
We mention that in \cite{Katt} homological properties
of the ideals $L(P,Q)$ in general are studied, like regularity, projective
dimension, and length of first linear strand.

\section{Topological results}
\label{sec:Top}
We develop topological results for simplicial complexes 
which we will need in the next section when describing the
graded Betti numbers of letterplace ideals.

   The typical situation is derived from the simplicial complex
associated with the edge ideal of a bipartite graph. We may then compute
the homology of the simplicial complex from the homology of an associated
simplicial complex on either of the two vertex sets.

\subsection{Homotopy equivalent simplicial complexes}

The following basic situation will be useful for us.
Let $X$ be a simplicial complex with vertex set $V$.
Let $a_1$ and $a_2$ be vertices in $V$ and suppose that when
$\{ a_1 \} \cup G$ and $\{ a_2 \} \cup G$ are in $X$, then
$\{ a_1, a_2 \} \cup G$ is in $X$. Let $\Vaa = V \backslash \{a_1, a_2 \}$,
and let $Y$ be the simplicial complex on
$W = \Vaa \cup \{a \}$ such that
\begin{itemize} 
\item The restrictions 
$X_{|\Vaa} = Y_{|\Vaa}$.
\item For $G \sus V_0$, then $G \cup \{a \}$
is a face of $Y$ iff either $G \cup \{ a_1 \}$ or $G \cup \{ a_2 \}$ is
a face of $X$. 
\end{itemize}


\scalebox{1} 
{
\begin{pspicture}(-1,-1.6129688)(7.502812,1.6129688)
\psline[linewidth=0.04,fillstyle=vlines,hatchwidth=0.04,hatchangle=0.0](1.5409375,-0.32546875)(2.5409374,1.1545312)(3.3409376,-0.34546876)(1.5809375,-0.32546875)(1.6009375,-0.34546876)
\psline[linewidth=0.04](1.6009375,-0.36546874)(2.0009375,-1.1054688)(2.9209375,-1.1054688)(3.3809376,-0.38546875)(3.4009376,-0.40546876)
\psdots[dotsize=0.12](1.6209375,-0.36546874)
\psdots[dotsize=0.16](1.6009375,-0.36546874)
\psdots[dotsize=0.16](3.3809376,-0.38546875)
\psdots[dotsize=0.16](2.9409375,-1.1254687)
\psdots[dotsize=0.16](2.0009375,-1.1054688)
\psdots[dotsize=0.16](2.5409374,1.1345313)
\psline[linewidth=0.04](7.4209375,1.1145313)(7.4209375,-0.28546876)(6.9609375,-1.0854688)(7.9409375,-1.0854688)(7.4409375,-0.30546874)(7.4409375,-0.32546875)
\psdots[dotsize=0.16](7.4209375,1.0745312)
\psdots[dotsize=0.16](7.4409375,-0.32546875)
\psdots[dotsize=0.16](6.9609375,-1.1254687)
\psdots[dotsize=0.16](7.9609375,-1.1254687)
\usefont{T1}{ptm}{m}{n}
\rput(1.1223438,-0.12046875){$a_1$}
\usefont{T1}{ptm}{m}{n}
\rput(3.8823438,-0.10046875){$a_2$}
\usefont{T1}{ptm}{m}{n}
\rput(2.6323438,1.4195312){$b$}
\usefont{T1}{ptm}{m}{n}
\rput(1.7823437,-1.3604687){$c$}
\usefont{T1}{ptm}{m}{n}
\rput(3.1923437,-1.3604687){$d$}
\usefont{T1}{ptm}{m}{n}
\rput(6.782344,-1.4404688){$c$}
\usefont{T1}{ptm}{m}{n}
\rput(8.192344,-1.4404688){$d$}
\usefont{T1}{ptm}{m}{n}
\rput(7.3123436,1.3395313){$b$}
\usefont{T1}{ptm}{m}{n}
\rput(7.822344,-0.18046875){$a$}
\usefont{T1}{ptm}{m}{n}
\rput(0.3665625,0.5595313){$X$ :}
\usefont{T1}{ptm}{m}{n}
\rput(6.3465624,0.49953124){$Y$ :}
\end{pspicture} 
}

\begin{proposition} \label{TopProHtp}
$X$ and $Y$ are homotopy equivalent.
\end{proposition}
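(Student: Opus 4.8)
The natural approach is to construct an explicit homotopy equivalence by collapsing, or rather by a deformation retraction after realizing both complexes inside a common space. The cleanest route: I would show that $X$ deformation retracts onto $Y$ by exhibiting $Y$ (up to the obvious relabeling of $a$) as obtained from $X$ through an elementary operation that preserves homotopy type. Concretely, the hypothesis — whenever $\{a_1\}\cup G$ and $\{a_2\}\cup G$ lie in $X$, so does $\{a_1,a_2\}\cup G$ — is exactly the condition that lets one "fold" the two vertices $a_1,a_2$ together.

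**Key steps.**

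First I would invoke the folding/identification lemma for simplicial complexes: if $a_1,a_2$ are vertices of a simplicial complex $X$ such that $\mathrm{lk}_X(a_1)\cap\mathrm{lk}_X(a_2) \supseteq$ (the relevant sub-link condition) — more precisely, such that for every face $G$ with $G\cup\{a_1\},\,G\cup\{a_2\}\in X$ one also has $G\cup\{a_1,a_2\}\in X$ — then the simplicial map $f\colon X\to X'$ that sends $a_1,a_2\mapsto a$ (and fixes everything else), where $X'$ is the complex whose faces are the images of faces of $X$, induces a homotopy equivalence $|X|\simeq|X'|$. I would then check that $X'$ is precisely the complex $Y$ described in the statement: a set $G\subseteq V_0$ is a face of both (that's the restriction condition $X_{|V_0}=Y_{|V_0}$), and $G\cup\{a\}$ is a face of $X'$ iff it is the image of a face of $X$ containing $a_1$ or $a_2$, i.e. iff $G\cup\{a_1\}\in X$ or $G\cup\{a_2\}\in X$ — which is exactly the defining property of $Y$. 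So $X'=Y$.

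**Proving the folding lemma.** The substantive content is the folding lemma itself, and I would prove it via a standard mapping-cylinder/NDR-pair argument or, more combinatorially, by the nerve-type or carrier argument: the map $f$ has a section-like "contiguity" property. Specifically, one can factor $f$ through the complex $X''$ obtained by adding to $X$ the cone vertex identifications, or — simplest — argue that $|f|\colon |X|\to|Y|$ is a homotopy equivalence because its point-inverses are contractible and it is a quotient map of CW-complexes satisfying the hypotheses of a gluing/Vietoris-type theorem. Even more elementary: extend $X$ to $\hat X = X\cup\{\{a_1,a_2\}\cup G : G\cup\{a_1\},G\cup\{a_2\}\in X\}$; by hypothesis $\hat X = X$, so in fact $X$ already contains every such edge $\{a_1,a_2\}$-face. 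Then the subcomplex of $X$ consisting of faces not containing $a_1$, together with $a_2$, is a deformation retract: push $a_1$ to $a_2$ along the edge $\{a_1,a_2\}$, which is legitimate face-by-face precisely because of the hypothesis. Carrying this out linearly on geometric realizations gives the deformation retraction $|X|\to|Y|$.

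**Main obstacle.** The routine part is matching $X'$ with $Y$; the real work is the folding lemma, and the delicate point there is verifying that the vertex identification $a_1,a_2\mapsto a$ is a homotopy equivalence rather than merely a homology isomorphism — one must be careful that collapsing the edge $\{a_1,a_2\}$ in every face simultaneously is well-defined and continuous, i.e. that the hypothesis is used at every face $G$ and not just maximal ones. I expect the paper handles this either by an explicit piecewise-linear deformation retraction along the segments $[a_1,a_2]$ inside each simplex, or by citing a standard result (e.g. a contiguity/acyclic-carrier argument, or the fact that an edge contraction in a complex where the edge lies in every common face of its endpoints' links is a homotopy equivalence). I would present the explicit deformation retraction: define $r_t\colon |X|\to|X|$ linearly on barycentric coordinates by sliding the $a_1$-coordinate into the $a_2$-coordinate, note $r_0=\mathrm{id}$, $r_1$ has image $|Y|$ (after relabeling $a_2$ as $a$), and $r_1|_{|Y|}=\mathrm{id}$, and check well-definedness on each closed simplex using the hypothesis on $G$. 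That completes the proof.
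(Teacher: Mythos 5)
Your reduction of the statement to a ``folding lemma'' and your identification of the image complex $X'$ under the vertex identification $a_1,a_2\mapsto a$ with $Y$ are both correct; the gap is in the argument you actually carry out for the folding lemma. The linear homotopy $r_t$ that slides the $a_1$-coordinate into the $a_2$-coordinate is well defined only if every face $F$ of $X$ containing $a_1$ remains a face after replacing $a_1$ by $a_2$; that is the hypothesis of Lemma \ref{TopLemaap} (containment of neighbourhoods), and it is strictly stronger than the hypothesis of Proposition \ref{TopProHtp}. Under the weaker hypothesis $r_1$ need not even map $|X|$ into $|X|$, and more fundamentally $Y$ is in general not isomorphic to any subcomplex of $X$, so no deformation retraction of $|X|$ onto a copy of $|Y|$ can exist: $|Y|$ is a quotient of $|X|$, not a subspace. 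The paper's own figure refutes your claim that the subcomplex of faces not containing $a_1$ is a deformation retract: there $X$ and $Y$ are both homotopy equivalent to a circle, while the deletion of $a_1$ (which is what your $r_1$ would retract onto, with $a_2$ renamed $a$) is a tree, hence contractible. Concretely, the deletion misses exactly the faces $G\cup\{a\}$ of $Y$ with $G\cup\{a_1\}\in X$ but $G\cup\{a_2\}\notin X$, such as $\{a,c\}$ in the figure.

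The viable route is the one you mention only in passing: the simplicial quotient map $f\colon X\to Y$ is a homotopy equivalence because for every face $G$ of $Y$ the subcomplex of $X$ induced on $f^{-1}(G)$ is contractible; this is the Lov\'asz lemma the paper invokes (a bare appeal to ``quotient maps with contractible point-inverses are homotopy equivalences'' is not an elementary standard fact and cannot be cited as such). This is exactly the paper's proof, and the place where the hypothesis enters is the verification you omit: for $G=G'\cup\{a\}$ with, say, $G'\cup\{a_1\}\in X$ and $G'\cup\{a_2\}\notin X$, one shows the induced complex on $G'\cup\{a_1,a_2\}$ is a cone with apex $a_1$, which requires applying the hypothesis to every subset $F'\subseteq G'$ (if $F'\cup\{a_2\}$ is a face, then so is $F'\cup\{a_1,a_2\}$), not just to $G'$. (The paper also gives a second, quicker argument: the quotients of $X$ and of $Y$ by suitable contractible subcomplexes, namely the subcomplex generated by faces containing $\{a_1,a_2\}$ and its image, coincide.) As written, your proposal rests on a step that fails, so it does not establish the proposition.
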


\begin{proof} 
A short argument using the category of CW-complexes is the following:
let $A_2$ be the subcomplex of $X$ generated by the faces containing
$\{ a_1, a_2 \}$, and let $A_1$ be the subcomplex of $Y$
generated by faces $G$ such that $(G \backslash \{a \}) \cup \{a_1, a_2 \}$
is a face of $X$. The quotient CW-complexes
$X/A_2$ and $Y/A_1$ can then be seen to be the same. By
Propositions 0.16 and 0.17 in \cite{Hat}, $X$ is homotopic to 
$X/A_2$, and $Y$ is homotopic to $Y/A_1$, and therefore $X$ and $Y$
are homotopic.

\medskip
A detailed argument using simplicial complexes is as follows:
the map $ f: V \pil V_0 \cup \{a\}$ sending $a_1$ and $a_2$
to $a$ induces a 
simplicial map $\tilde{f}: X \pil Y$, sending a face $F$ of $X$ to a face $f(F)$
of $Y$. By Lemma 2.6 of \cite{Lov} we need to check that for
every face $G$ of $Y$ the subcomplex of $X$ induced on $f^{-1}(G)$
is contractible. This will imply that $X$ and $Y$ are homotopy equivalent.

If $G$  does not contain $a$, then $f^{-1}(G) = G$ 
is also a face of $X$ (hence a simplex, hence contractible).
If $G = G^\prime \cup \{a\}$, 
then at least one of $G^\prime \cup \{a_1\}$  and $G^\prime \cup \{a_2\}$ 
is in $X$. 
The inverse image of $G$ by $f$ is $G^\prime \cup \{a_1, a_2\}$.
If both $G^\prime \cup \{a_1\}$ and $G^\prime \cup \{a_2\}$ lie in $X$, 
then also $G^\prime \cup \{a_1, a_2\}$ is in $X$ and we are done.

If $G^\prime \cup \{a_1\}$ is in $X$ and $G^\prime \cup \{a_2\}$ is not, 
then we claim that the subcomplex $C$ of $X$ induced on 
$G^\prime \cup \{a_1, a_2\}$ is a cone on $a_1$ and hence is contractible, 
yielding the result. Let $F^\prime \sus G^\prime$. It is a face of $C$ not
containing $a_1$. Since $G^\prime \cup \{ a_1 \}$ is in $C$,
$F^\prime \cup \{a_1\}$ is in $C$. If also $F^\prime \cup \{ a_2 \}$ is in 
$C$, then $F^\prime \cup \{ a_1, a_2 \}$ is in $X$ and so in $C$.
The upshot is that any facet of $C$ will contain $a_1$, and so 
$C$ is a cone.
\end{proof}

\subsection{Simplicial complexes from bipartite graphs}
\label{Subsec:Top:Bip}

We consider a bipartite graph with vertex set $A \cup B$ and
edges between $A$ and $B$. Let $I_X$ be the edge ideal of this
graph and $X$ the simplicial complex with Stanley-Reisner ideal $I_X$.

\begin{lemma}
If some vertex in $A$ or $B$ is not incident to an edge, then 
$X$ is contractible.
\end{lemma}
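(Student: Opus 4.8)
The plan is to show directly that a vertex incident to no edge is a \emph{cone point} of $X$, so that $X$ is a cone and hence contractible.

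First I would unwind the Stanley--Reisner correspondence: since $I_X$ is the edge ideal of the bipartite graph on $A \cup B$, its minimal generators are the squarefree monomials $x_p x_q$ with $\{p,q\}$ an edge, so the faces of $X$ are exactly the subsets of $A \cup B$ containing no edge, i.e.\ the independent sets of the graph. By the symmetry between $A$ and $B$ it suffices to treat a vertex $v \in A$ that is incident to no edge.

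Next, since the singleton $\{v\}$ contains no edge, $v$ is a vertex of $X$. I claim $v$ is a cone point: if $F$ is any face of $X$, then $F \cup \{v\}$ still contains no edge, because $F$ contains none and no edge of the graph has $v$ as an endpoint; hence $F \cup \{v\} \in X$. Therefore $X = v * X_{|(A \cup B) \setminus \{v\}}$ is a (nonempty) cone with apex $v$, and a cone is contractible.

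There is really no obstacle here; the hypothesis that the vertex is not incident to an edge is used exactly once, to guarantee that adjoining $v$ never produces a forbidden edge, and the remainder is the standard fact that a cone is contractible. This lemma should be read as the degenerate base case preceding the more substantial reductions (such as Proposition \ref{TopProHtp}): in all later arguments one may therefore assume that every vertex of the relevant bipartite graph is incident to some edge.
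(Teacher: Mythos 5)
Your proof is correct and is exactly the paper's argument: the vertex incident to no edge is a cone point, so $X$ is a cone over it and hence contractible. You have merely spelled out the details that the paper leaves as ``clear.''
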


\begin{proof}
This is clear since in this case $X$ is a cone over this vertex.
\end{proof}


\begin{lemma} \label{TopLemaap}
Let $a$ and $a^\prime$ in $A$ be such that the
set of neighbours of $a^\prime$ contains the set of neighbours of $a$. 
Let $Y$ be the induced simplicial complex on $(A\backslash \{ a^\prime\})
\cup B$. Then $X$ and $Y$ are homotopy equivalent.
\end{lemma}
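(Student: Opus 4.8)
The plan is to deduce Lemma~\ref{TopLemaap} directly from Proposition~\ref{TopProHtp} by checking that the bipartite-graph setup furnishes exactly the combinatorial hypothesis required there. Concretely, I would take the two vertices in the role of $a_1, a_2$ of Proposition~\ref{TopProHtp} to be $a$ and $a'$, both lying in $A$. The first thing to verify is the hypothesis of that proposition: whenever $\{a\} \cup G$ and $\{a'\} \cup G$ are faces of $X$, then $\{a, a'\} \cup G$ is a face of $X$. Since $X$ is the independence complex of a bipartite graph on $A \cup B$ (i.e. the Stanley--Reisner complex of the edge ideal), a subset is a face precisely when it contains no edge, and the only possible edges are between $A$ and $B$; in particular $\{a,a'\}$ is never an edge, so no obstruction comes from the pair $\{a,a'\}$ itself. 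Thus $\{a,a'\} \cup G$ fails to be a face only if it contains an $A$--$B$ edge with endpoint $a$ or $a'$ (the other edges are already excluded because $\{a\}\cup G$ and $\{a'\}\cup G$ are faces). An edge $\{a, b\}$ with $b \in G \cap B$ would contradict $\{a\}\cup G \in X$, and likewise an edge $\{a', b\}$ contradicts $\{a'\}\cup G \in X$. Hence $\{a,a'\}\cup G$ is a face, and the hypothesis holds.

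Next I would identify the complex $Y$ produced by Proposition~\ref{TopProHtp} with the complex $Y$ in the lemma. In the proposition's construction, $V_0 = V \setminus \{a, a'\} = (A \setminus \{a,a'\}) \cup B$, the new vertex set is $W = V_0 \cup \{a_*\}$ for a merged vertex $a_*$, the restriction to $V_0$ agrees with $X_{|V_0}$, and for $G \subseteq V_0$ the set $G \cup \{a_*\}$ is a face iff $G \cup \{a\}$ or $G \cup \{a'\}$ is a face of $X$. The claim is that this $Y$ is isomorphic to the induced subcomplex of $X$ on $(A \setminus \{a'\}) \cup B$, under the bijection that renames $a_*$ as $a$. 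Here is where the neighbourhood containment $N(a) \subseteq N(a')$ is used: for $G \subseteq V_0$, the set $G \cup \{a\}$ is a face of the induced subcomplex on $(A\setminus\{a'\})\cup B$ iff $G \in X_{|V_0}$ and no $b \in G \cap B$ is a neighbour of $a$; I must show this is equivalent to ``$G \cup \{a\}$ or $G \cup \{a'\}$ is a face of $X$.'' The forward direction is immediate since $G \cup \{a\}$ being a face of $X$ is one of the two disjuncts. For the reverse direction, if $G \cup \{a'\} \in X$ then no $b \in G\cap B$ neighbours $a'$, hence (by $N(a)\subseteq N(a')$) none neighbours $a$ either, so $G \cup \{a\} \in X$ as well; if $G\cup\{a\}\in X$ we are trivially done. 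So the two complexes coincide.

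Having matched both $X$'s hypotheses and the two descriptions of $Y$, Proposition~\ref{TopProHtp} gives that $X$ and $Y$ are homotopy equivalent, which is the assertion. I would write this up as: set $a_1 = a$, $a_2 = a'$, verify the merging hypothesis as above, observe that the merged complex of Proposition~\ref{TopProHtp} equals the induced complex on $(A\setminus\{a'\})\cup B$ using $N(a)\subseteq N(a')$, and conclude. The only mildly delicate point — and the one I would make sure to state cleanly rather than the ``main obstacle,'' since it is really just a careful unwinding of definitions — is the equivalence in the previous paragraph, where the neighbourhood containment is exactly what rescues the reverse implication; everything else is a direct translation between the Stanley--Reisner/independence-complex language and the abstract combinatorial setup of Proposition~\ref{TopProHtp}.
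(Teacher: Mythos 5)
Your proposal is correct and takes essentially the same route as the paper: both apply Proposition~\ref{TopProHtp} with the pair $a_1=a$, $a_2=a'$, the paper merely condensing your two verifications into the single observation that any face of $X$ containing $a'$ remains a face after adjoining $a$ (which yields both the merging hypothesis and the identification of the merged complex with the induced complex on $(A\setminus\{a'\})\cup B$). Your more detailed unwinding is accurate, so there is nothing to fix.
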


\begin{proof}
Note that if a face $F$ of $X$ contains $a^{\prime}$ then $F \cup \{ a \}$
is also a face of $X$. Then $Y$ is obtained from $X$ as in 
Proposition \ref{TopProHtp}. Thus $Y$ and $X$ are homotopy equivalent.
\end{proof}

In the following, subsets of a set denoted by a capital letter will
often be denoted by bold lower case letters.
Let $Y$ be the simplicial complex on $B$ consisting of 
the subsets $\bb$ of $B$ such that $\bb \cup \{ a \}$ is a face of 
$X$, for some $a \in A$. That is, there is no edge between $a$ and 
any element of $\bb$.

  The following is very close to \cite[Thm. 4.7]{Dal}. We also thank
Morten Brun for valuable ideas concerning its proof.

\begin{proposition} \label{TopProSusp}
$X$ is homotopy equivalent to the suspension of $Y$. 
\end{proposition}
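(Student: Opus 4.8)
I want to exhibit $X$ as glued together from two contractible pieces whose intersection is homotopy equivalent to $Y$, and then invoke the Mayer--Vietoris/homotopy-pushout description of the suspension. Concretely, set up the bipartite graph with parts $A$ and $B$, and recall that a subset $F = \aa \cup \bb$ (with $\aa \subseteq A$, $\bb \subseteq B$) is a face of $X$ precisely when there is no edge between any vertex of $\aa$ and any vertex of $\bb$. The first step is a reduction: by Lemma~\ref{TopLemaap} I may repeatedly delete vertices of $A$ whose neighbour set contains that of another vertex of $A$, without changing the homotopy type of $X$; similarly one may assume every vertex is incident to some edge (else $X$ is contractible by the first lemma, and the suspension of $Y$ is then also contractible since $Y = \emptyset$ forces $\Sigma Y \simeq S^0$ — so actually I must be slightly careful and treat the degenerate cases separately). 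Having cleaned up, the real content is the gluing argument, which does not need the reduction but is cleanest to think about after it.

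\textbf{The gluing.} Pick a distinguished vertex $a_0 \in A$ (assuming $A \neq \emptyset$; if $A = \emptyset$ then $X$ is a full simplex on $B$, contractible, and $Y$ is also the full simplex on $B$, so $\Sigma Y$ is contractible and we are done). Let
\[
X_1 = \{ F \in X : a_0 \notin F \}, \qquad X_2 = \{ F \in X : F \cup \{a_0\} \in X \}.
\]
Then $X_2$ is the closed star of $a_0$, hence a cone with apex $a_0$ and contractible. The subcomplex $X_1$ is the induced complex on $(A \setminus \{a_0\}) \cup B$; I claim $X_1$ is contractible as well. For this I would use the same mechanism as Proposition~\ref{TopProHtp}: inside $X_1$, collapse all of $A \setminus \{a_0\}$ together. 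The point is that if $\aa \cup \bb \in X_1$ with $\aa \subseteq A \setminus \{a_0\}$ nonempty, then since each $a \in \aa$ has no edge to $\bb$, the single-vertex face $\{a\} \cup \bb$ lies in $X_1$ too; contracting $A \setminus \{a_0\}$ to a point $a_\ast$ produces (by Lemma~2.6 of \cite{Lov}, checking the fibres as in the proof of Proposition~\ref{TopProHtp}) a homotopy equivalence of $X_1$ with a complex on $\{a_\ast\} \cup B$ that is a cone on $a_\ast$, hence contractible. Finally $X_1 \cap X_2$ consists of the faces $\bb \cup \aa$ with $a_0 \notin \aa$ and with $\aa \cup \{a_0\} \cup \bb$ still a face, i.e. no edge from $\aa \cup \{a_0\}$ to $\bb$; after contracting $A \setminus \{a_0\}$ as above this intersection is carried homotopy-equivalently onto exactly $Y$ (the faces $\bb \subseteq B$ with $\bb \cup \{a_0\}$ a face of $X$, together with whatever is needed from the $a_\ast$ vertex, which again cones off harmlessly). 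Since $X = X_1 \cup X_2$ with $X_1, X_2$ contractible, the homotopy pushout gives $X \simeq \Sigma(X_1 \cap X_2) \simeq \Sigma Y$.

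\textbf{Main obstacle.} The delicate point is not the abstract pushout formula but making the three subcomplexes $X_1$, $X_2$, $X_1\cap X_2$ and the contracting maps genuinely compatible, so that a single simplicial (or CW) model realizes $X$ as the union of two cones glued along $Y$ simultaneously. The cleanest route, matching the style already used in Proposition~\ref{TopProHtp}, is to first apply the $A$-vertex reduction of Lemma~\ref{TopLemaap} down to a complex where the neighbour sets of the remaining $A$-vertices form an antichain, so that ``contract $A\setminus\{a_0\}$'' becomes a single honest simplicial quotient rather than an iterated one; then verify the fibre-contractibility hypothesis of \cite{Lov} separately on $X$, on $X_1$, and on $X_1\cap X_2$, and check these three collapses agree on overlaps. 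I would also double-check the boundary cases ($A$ or $B$ empty, or a vertex isolated) at the very start, since there $\Sigma Y$ may be $S^0$ or contractible and the formula still has to come out right. Secondarily, one must confirm the identification of $X_1\cap X_2$ with $Y$: the condition ``$\bb\cup\{a_0\}$ is a face of $X$'' is literally the defining condition for $\bb$ to lie in $Y$ via the witness $a = a_0$, so the inclusion $X_1\cap X_2 \hookrightarrow$ (cone on $Y$) is immediate, and it remains only to see the extra $a_\ast$-vertex contributes nothing — exactly the cone argument from Proposition~\ref{TopProHtp}.
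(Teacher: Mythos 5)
Your decomposition does not work, and the gap is not just a technical one about compatibility of collapses. You take $X_2$ to be the closed star of a single distinguished vertex $a_0$ and $X_1$ its deletion, so $X_1\cap X_2$ is the link of $a_0$; to conclude $X\simeq\Sigma(X_1\cap X_2)$ you need both pieces contractible and you need the link to be (homotopy equivalent to) $Y$. Both claims are false in general. Concretely, take $A=\{a_0,a_1,a_2\}$, $B=\{b_1,b_2\}$ with edges $a_0b_1,\,a_0b_2,\,a_1b_1,\,a_2b_2$. Here $Y=\{\emptyset,\{b_1\},\{b_2\}\}\simeq S^0$, so the proposition asserts $X\simeq S^1$ (which is correct: $X$ is the $2$-simplex on $A$ together with the path $a_1,b_2,b_1,a_2$). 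But the deletion of $a_0$ is the independence complex of the two disjoint edges $a_1b_1$ and $a_2b_2$, i.e.\ $S^0\ast S^0\simeq S^1$, not contractible; and the link of $a_0$ is the full simplex on $\{a_1,a_2\}$, which is contractible and certainly not $Y$. Your argument would conclude $X\simeq\Sigma(\mathrm{lk}(a_0))\simeq\mathrm{pt}$, the wrong answer. The source of the error in the identification step is that $Y$ is defined by the existence of \emph{some} witness $a\in A$ with no edge to $\bb$, not the fixed witness $a_0$; your parenthetical restatement of $Y$ as ``the faces $\bb\subseteq B$ with $\bb\cup\{a_0\}$ a face of $X$'' is just the simplex on the non-neighbours of $a_0$. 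The intermediate claim that collapsing $A\setminus\{a_0\}$ to a point $a_\ast$ turns the deletion into a cone on $a_\ast$ also fails: the full set $B$ is always a face of the deletion (there are no edges inside $B$), but $\{a_\ast\}\cup B$ is a face of the collapsed complex only if some vertex of $A\setminus\{a_0\}$ is isolated, as the example shows.

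The paper's proof uses a genuinely different, ``global'' decomposition, not one based at a single vertex: $X_0=\{\aa\cup\bb\in X\mid \bb\in Y\}$ and $X_1=$ the full simplex on $B$. Then $X_0\cup X_1=X$ and $X_0\cap X_1=Y$ on the nose, $X_1$ is contractible because it is a simplex, and the contractibility of $X_0$ is the real content, supplied by Lemma 4.2 of \cite{Dal} applied to the covering of $Y$ by the subcomplexes $\Gamma_j=\{\bb\subseteq B\mid \bb\cup\{a_j\}\in X\}$ (a union-of-cones argument over all vertices of $A$ simultaneously); the suspension then comes from Lemma 10.4 of \cite{Bj}. If you want to salvage your approach you must replace the star/deletion of one vertex $a_0$ by a decomposition that sees all of $A$ at once in this way.
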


\begin{proof}
By Lemma 10.4 in \cite{Bj} one has that, if $X$ can be written as the 
union of two contractible
subcomplexes $X_0$ and $X_1$, then $X$ is homotopy equivalent to the suspension
of $X_0 \cap X_1$.
In our case, let \[X_0 = \{\aa \cup \bb \in X \mid \bb \in Y\}\] and 
\[X_1 = \{\aa \cup \bb \in X \mid \aa = \emptyset\}\] 
Note that $X_1$ identifies as the simplex on $B$.
We see that $X_0 \cup X_1 = X$ and $X_0 \cap X_1 = Y$. 
Moreover $X_1$, being a simplex, is clearly contractible. It is left to 
show that $X_0$ is contractible as well.
This comes from Lemma 4.2 in \cite{Dal} by taking $\Gamma = Y$ and 
$\Gamma_j = \{\bb \in B \mid \bb \cup \{ a_j \} \in X\}$.
\end{proof}

\subsection{The join} \label{Subsec:join}
Recall that if $X$ is a simplicial complex on a vertex set $V$, and
$Y$ is a simplicial complex on a vertex set $W$, then the
join $X * Y$ is the simplicial complex on $V \cup W$ whose faces
are all $F \cup G$ where $F$ is a face of $X$ and $G$ is a face of $Y$.
If $X$ is homotopy equivalent to a simplicial complex $X^\prime$
on $V^\prime$, then $X * Y$ and $X^\prime * Y$ are homotopy equivalent.

We define the polynomial of reduced cohomology as
\[ \tH(X,t) = \sum_{i \geq -1} t^i \dim_\kr \tH^i(X,\kr). \]

\begin{proposition} \label{pro:TopJoin}
Let $X$ and $Y$ be simplicial complexes on disjoint vertex sets.
Then the polynomial
\[ t \tH(X*Y,t) = t \tH(X,t) \cdot t \tH(Y,t). \]
Consequently if $X_1, \ldots, X_n$ are simplicial complexes on disjoint vertex sets, then
\[ t \tH(X_1 * \cdots * X_n) = \prod_{i = 1}^n t \tH(X_i,t). \]
\end{proposition}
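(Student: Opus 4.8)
The plan is to reduce everything to the single-join case $X * Y$ and then prove the multiplicativity of the polynomial $t\tH(-,t)$ by the Künneth formula for reduced cohomology of joins. The key input is the standard fact that the reduced homology (equivalently reduced cohomology, over a field $\kr$) of a join satisfies
\[ \tH^k(X * Y, \kr) \iso \bigoplus_{i+j = k-1} \tH^i(X,\kr) \te_\kr \tH^j(Y,\kr), \]
which I would cite from Björner's chapter or from Milnor's classical result (the join $X*Y$ is homotopy equivalent to the suspension of the smash product, whence the degree shift by $1$). Since we work over a field there are no Tor terms, so this is a clean tensor decomposition.

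First I would record the single-join statement. Writing $\tH(X,t) = \sum_{i \geq -1} t^i \dim_\kr \tH^i(X,\kr)$ and similarly for $Y$, the Künneth isomorphism gives
\[ \dim_\kr \tH^k(X*Y,\kr) = \sum_{i+j = k-1} \dim_\kr \tH^i(X,\kr) \cdot \dim_\kr \tH^j(Y,\kr). \]
Multiplying by $t^{k}$ and summing over $k \geq -1$ (note $\tH^{-1}$ of a nonempty complex is $0$, and the empty complex has $\tH^{-1} = \kr$, so the conventions are consistent), the right-hand side is exactly the coefficient extraction of the product $\big(\sum_i t^{i+1}\dim \tH^i(X,\kr)\big)\cdot\big(\sum_j t^{j+1}\dim \tH^j(Y,\kr)\big)$, i.e. $t\tH(X,t)\cdot t\tH(Y,t)$. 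This establishes $t\tH(X*Y,t) = t\tH(X,t)\cdot t\tH(Y,t)$.

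For the general statement with $X_1, \ldots, X_n$, I would argue by induction on $n$, using associativity of the join: $X_1 * \cdots * X_n = (X_1 * \cdots * X_{n-1}) * X_n$, which holds on the nose since the vertex sets are disjoint. Applying the single-join identity and the inductive hypothesis $t\tH(X_1 * \cdots * X_{n-1}, t) = \prod_{i=1}^{n-1} t\tH(X_i,t)$ yields $t\tH(X_1 * \cdots * X_n, t) = \prod_{i=1}^{n} t\tH(X_i,t)$, completing the induction. The base case $n=1$ is trivial.

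The only real obstacle is bookkeeping the $-1$ grading and the conventions for the empty and the void complex: one must make sure the "$t$"-shift genuinely converts the degree-shift-by-one in the Künneth formula into ordinary polynomial multiplication, and that the edge cases (one of the $X_i$ empty, hence acyclic with $\tH^{-1} = \kr$, so $t\tH(X_i,t) = 1$) behave as a multiplicative identity — which they do. Beyond that the proof is a formal manipulation once the Künneth formula for joins is quoted.
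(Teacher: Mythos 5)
Your proposal is correct and follows essentially the same route as the paper: the paper also quotes the K\"unneth-type formula for the reduced homology of a join (via the shifted tensor product of augmented chain complexes and \cite[Thm.~3B.5]{Hat}), and the degree-shift-by-one is absorbed exactly as you do by the extra factor of $t$, with the $n$-fold case following by associativity of the join. Your attention to the $\tH^{-1}$ conventions and the field hypothesis (no Tor terms) is sound and consistent with the paper's argument.
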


\begin{proof} Let $\tC(X;\kr)$ be the augmented chain complex of 
the simplicial complex $X$, see \cite[Sec.1.3]{StMi}.
(Its homology $H_q(\tC(X;\kr))$ is the reduced homology $\tH_q(X;\kr)$.)
Then the augmented chain complex of the join $X * Y$ is the tensor
product of the augmented chain complexes of $X$ and $Y$, but with
a shift in homological degree:
\[ \tC(X*Y;\kr)[-1] \iso \tC(X;\kr) \te_\kr \tC(Y;\kr). \]

By \cite[Thm. 3B.5]{Hat} the reduced homology of the join
\[ \tH_{q+1}(X*Y; \kr) \iso \bigoplus_{i} 
\tH_i(X, \kr) \otimes \tH_{q-i}(Y, \kr). \]
This proves the statement.



\end{proof}

\subsection{Simplicial complexes homotopic to joins}

We consider $X_{AB}$ a simplicial complex on vertex set $A \cup B$
with Stanley-Reisner ideal $I_{AB}$, 
and $X_{BC}$ a simplicial complex on vertex set $B \cup C$
with Stanley-Reisner ideal $I_{BC}$. 
We suppose no generator of $I_{AB}$ is divisible by a quadratic
monomial generator $x_{b_1}x_{b_2}$ where $b_1,b_2$ is in $B$
and similarly for $I_{BC}$.

Let $X$ be the simplicial complex on $A \cup B \cup C$ 
whose Stanley-Reisner ideal $I$ is generated by all $x_{\aa} \in I_{AB}$,
$x_{\cc} \in I_{BC}$ (where $\aa \sus A$ and $\cc \sus C$),
 and $x_{\aa} x_{b_i} x_{\cc}$ where 
$x_{\aa}x_{b_i} \in I_{AB}$ and $x_{b_i}x_{\cc} \in I_{BC}$. 
Then $X$ consists
of all $\aa \cup \bb \cup \cc$ such that for each $b_i \in \bb$
then either $\aa \cup \{b_i \}$ is in $X_{AB}$ or $\{b_i \} \cup \cc$ 
is in $X_{BC}$. 

Let $\Bp$ and $\Bpp$ be copies of $B$. We thus get a simplicial
complex $X_{A\Bp}$ on $A \cup \Bp$, a copy of $X_{AB}$, and so on.

\begin{theorem}
The join $X_{A\Bp} * X_{\Bpp C}$ is homotopy equivalent to $X$.
\end{theorem}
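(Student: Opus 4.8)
The plan is to use a double-mapping-cylinder / discrete Morse–type collapse argument, matched against the suspension description from Proposition \ref{TopProSusp}. First I would set up the comparison between $X$ and the join $X_{A\Bp} * X_{\Bpp C}$ by exhibiting both as homotopy equivalent to a common space built from the simplicial complex $Y_B$ on the vertex set $B$ whose faces $\bb$ are those with $\bb$ extendable over $A$ in $X_{AB}$ (equivalently, with $\bb \cup \{a\}$ a face of $X_{AB}$ for some $a \in A$), and analogously $Y'_B$ on the $C$-side. By Proposition \ref{TopProSusp} applied to each of $X_{AB}$ and $X_{BC}$ separately, $X_{AB} \simeq \Sigma Y_{AB}$ and $X_{BC} \simeq \Sigma Y_{BC}$, where $Y_{AB}$ is the complex on $B$ described in Subsection \ref{Subsec:Top:Bip} and $Y_{BC}$ likewise on $B$. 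Combined with Proposition \ref{pro:TopJoin} (the join multiplies the cohomology polynomials, and a suspension multiplies by $t$), we get $t\tH(X_{A\Bp} * X_{\Bpp C}, t) = t\tH(X_{AB},t) \cdot t\tH(X_{BC},t) = t^2 \cdot t\tH(Y_{AB},t)\cdot t\tH(Y_{BC},t) = t \cdot t\tH(Y_{AB} * Y_{BC}, t)$, so the join of the two copies is homotopy equivalent (at least cohomologically, and I would upgrade to homotopy) to $\Sigma(Y_{AB} * Y_{BC})$. So it suffices to show $X \simeq \Sigma(Y_{AB} * Y_{BC})$, where the join $Y_{AB} * Y_{BC}$ is taken on the two disjoint copies $\Bp, \Bpp$ of $B$.

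Next I would prove $X \simeq \Sigma Z$ directly, where $Z$ is the appropriate complex playing the role of $Y$ in Proposition \ref{TopProSusp}, and then identify $Z \simeq Y_{AB} * Y_{BC}$. For the first part, I mimic the proof of Proposition \ref{TopProSusp}: write $X = X_0 \cup X_1$ where $X_1 = \{\aa \cup \bb \cup \cc \in X \mid \bb = \emptyset\}$ and $X_0$ is the ``extendable over $B$'' part. The hypothesis that no generator of $I_{AB}$ is divisible by a quadratic $x_{b_1}x_{b_2}$ with $b_1, b_2 \in B$ — and similarly for $I_{BC}$ — is exactly what forces $X_1$ to be (homotopy equivalent to) the simplex structure making the Björner Lemma 10.4 argument from \cite{Bj} go through, and an analogue of Lemma 4.2 in \cite{Dal} should give contractibility of $X_0$. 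The intersection $X_0 \cap X_1$ is then the complex $Z$ of all $\aa \cup \cc$ (with $\bb = \emptyset$) that survive in $X$, which by the defining condition on $X$ means $\aa \in X_{AB}$ with $\aa$ extendable by some single $b_i$ over the $B$-direction on one side, and $\cc \in X_{BC}$ with a matching extension. Unwinding the defining monomial condition, a face $\aa \cup \cc$ lies in $Z$ iff $\aa$ is a face of $Y_{AB}$ (viewed on $A$, meaning $\aa \cup \{b_i\}$ is a face of $X_{AB}$ for some $b_i \in B$) and $\cc$ is a face of $Y_{BC}$ — but with the subtlety that the same $b_i$ must work on both sides; I would need to check that the homotopy type does not see this coupling, perhaps by a further Lemma-\ref{TopProHtp}-style collapse on the $B$-vertices, showing $Z$ is homotopy equivalent to the honest join $Y_{AB}(A) * Y_{BC}(C)$, which in turn (by the last sentence of Subsection \ref{Subsec:join} on invariance of joins under homotopy equivalence) equals the join of the complexes on $\Bp$ and $\Bpp$ up to homotopy.

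The cleanest route might instead be inductive on $|A|$ (and symmetrically $|C|$): use Lemma \ref{TopLemaap} repeatedly to reduce to the case where the neighbourhoods of the $A$-vertices (in $X_{AB}$) form an antichain, handle the base case $|A| = 1$ or $|A| = 0$ by hand — when $A = \emptyset$, $X$ degenerates to a complex on $B \cup C$ and the statement reduces to Proposition \ref{TopProSusp}-type bookkeeping — and then peel off one $A$-vertex using a Mayer–Vietoris / pushout decomposition of $X$ along the star and deletion of that vertex, matching it term by term against the corresponding decomposition of $X_{A\Bp} * X_{\Bpp C}$. I expect the main obstacle to be precisely the bookkeeping in the intersection step: verifying that the ``path through a single $b_i$'' condition defining $X$ decouples up to homotopy into independent extendability on the $A$-side and the $C$-side, i.e. that the coupling variable $b_i \in B$ can be contracted away. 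This is where the quadratic-generator hypotheses on $I_{AB}$ and $I_{BC}$ must be used essentially, and where I would spend the most care — likely by an explicit sequence of Proposition \ref{TopProHtp} collapses identifying pairs of $B$-vertices, or by a nerve-type argument covering $X$ by the subcomplexes indexed by $b_i \in B$ and showing all nonempty intersections are contractible.
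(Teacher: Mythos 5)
Your main worked-out route does not prove the theorem as stated, because it silently assumes that $X_{AB}$ and $X_{BC}$ are complexes of bipartite edge ideals. In this theorem the only hypothesis is that no minimal generator of $I_{AB}$ (resp.\ $I_{BC}$) contains two $B$-variables: generators of the form $x_{\aa}$ with $\aa \sus A$ arbitrary, or $x_{\aa}x_{b}$ with $|\aa|$ arbitrary, are allowed. Proposition \ref{TopProSusp} therefore does not apply to $X_{AB}$ or $X_{BC}$, there is in general no suspension description $X_{AB}\simeq \Sigma Y_{AB}$, and your intermediate target ``$X\simeq\Sigma(Y_{AB}*Y_{BC})$'' is neither available nor correct (even in the genuinely bipartite case your bookkeeping is off by one: $\Sigma Y_{AB}*\Sigma Y_{BC}\simeq\Sigma^2(Y_{AB}*Y_{BC})$, and equality of the polynomials $\tH(\,\cdot\,,t)$ only gives cohomological, not homotopical, information). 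This generality is not a pedantic point: in the inductive proof of Corollary \ref{Cor:Top:Htpjoin} the theorem is applied with $X_{AB}=X_{\leq i}$, whose Stanley--Reisner generators have degree $i+1$. For the same reason your decomposition $X=X_0\cup X_1$ with $X_1=\{\aa\cup\bb\cup\cc\in X\mid \bb=\emptyset\}$ breaks down: that $X_1$ is the join $X_{AB}|_A * X_{BC}|_C$, which is a simplex (hence contractible) only when there are no pure $A$- or $C$-generators, so the Bj\"orner-type argument from Proposition \ref{TopProSusp} does not transfer.

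The step you yourself flag as the main obstacle --- showing that the condition ``for each $b_i\in\bb$, either $\aa\cup\{b_i\}\in X_{AB}$ or $\{b_i\}\cup\cc\in X_{BC}$'' decouples up to homotopy into independent conditions on two disjoint copies of $B$ --- is exactly the content of the theorem, and your proposal never carries it out: all three sketched routes end with ``I would need to check'' at precisely this point. Your closing suggestion (an explicit sequence of Proposition \ref{TopProHtp} collapses identifying $B$-vertices pairwise) is in fact the paper's proof: one interpolates between the join and $X$ by complexes $X_i$ on $A\cup B_{\leq i}\cup \Bp_{>i}\cup \Bpp_{>i}\cup C$, defined by requiring $\aa\cup\bbp_{>i}\in X_{A\Bp}$, $\bbpp_{>i}\cup\cc\in X_{\Bpp C}$ and $\aa\cup\bb_{\leq i}\cup\cc\in X$, and at each step verifies the gluing hypothesis of Proposition \ref{TopProHtp} for the pair $\bp_{i+1},\bpp_{i+1}$ together with the identification of the resulting complex with $X_{i+1}$ (a short case analysis using the ``at most one $B$-variable per generator'' hypothesis). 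Since you neither define these intermediate complexes nor perform any of these verifications, and the portions you do flesh out rely on inapplicable suspension structure, the proposal has a genuine gap rather than being an alternative proof.
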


\begin{proof}
Let $\Bp = \{ b_1^\prime, \ldots, b_p^\prime \}$ and $\Bpp = \{ b_1^{\prime \prime}, 
\ldots, b_p^{\prime \prime} \}$. We will apply Proposition \ref{TopProHtp}
by making $b_i^\prime$ and $b_i^{\prime \prime}$ successively equal
for $i = 1, \ldots, p$, and after identification we rename it as 
$b_i$.

Let $X_i$ be the simplicial complex on the vertex set 
\[ A \cup B_{\leq i} \cup \Bp_{> i} \cup \Bpp_{> i} \cup C \]
consisting of all subsets 
\begin{equation} \label{eq:Topabb}
\aa \cup \bb_{\leq i} \cup \bbp_{> i}
\cup \bbpp_{> i} \cup \cc
\end{equation} such that: 
\begin{enumerate} \label{TopEnumInX}
\item[i.] $\aa \cup \bbp_{> i}$ is in $X_{A\Bp}$.
\item[ii.] $\bbpp_{> i} \cup \cc$ is in $X_{\Bpp C}$.
\item[iii.] $\aa \cup \bb_{\leq i} \cup \cc$ is in $X$.
\end{enumerate}

Note that $X_0$ is the join $X_{A\Bp} * X_{\Bpp C}$ and $X_p = X$. 
We will show that $X_i$ and $X_{i+1}$ are homotopy equivalent by applying Proposition \ref{TopProHtp}.
Let 
\[ V_0 = (A \cup B_{\leq i} \cup \Bp_{> i} \cup \Bpp_{> i} \cup C) 
\backslash \{ \bp_{i+1}, \bpp_{i+1} \}. \]
Clearly $X_i$ and $X_{i+1}$ have the same restriction to $V_0$. Then let
$R$ of the form \eqref{eq:Topabb} be a subset of $V_0$. We must show that:
\begin{enumerate}
\item If $R \cup \{ \bp_{i+1} \}$ and $R \cup \{ \bpp_{i+1} \}$ are in
$X_i$, then $R \cup \{ \bp_{i+1}, \bpp_{i+1} \}$ is in $X_i$.
\item If  $R \cup \{ \bp_{i+1} \}$ or $R \cup \{ \bpp_{i+1} \}$ is in
$X_i$, then $R \cup \{ b_{i+1} \}$ is in $X_{i+1}$.
\item If  $R \cup \{ b_{i+1} \}$ is in $X_{i+1}$, then  
$R \cup \{ \bp_{i+1} \}$ or $R \cup \{ \bpp_{i+1} \}$ is in $X_i$.
\end{enumerate}

Part (1) is clear from the criteria above, i., ii. and iii.
For part (2), if $R \cup \{b_{i+1} \}$ is not in $X_{i+1}$, then
either i. $\aa \cup \bbp_{> i+1}$ is not in $X_{A\Bp}$
and so none of $R \cup \{\bp_{i+1} \}$ or $R \cup \{ \bpp_{i+1} \}$
is in $X_i$, or ii. $\bbpp_{> i+1} \cup \cc$ is not in $X_{\Bpp C}$
and so none of $R \cup \{\bp_{i+1} \}$ or $R \cup \{ \bpp_{i+1} \}$
is in $X_i$,
or iii. $\aa \cup \bb_{\leq i} \cup \{ b_{i+1} \} \cup \cc$ is not in 
$X$. Then, say, $\aa \cup \{ b_{i+1} \}$ is not in $X_{AB}$, and so
$R \cup \{ \bp_{i+1} \}$ is not in $X_i$. Similarly we get
$R \cup \{ \bpp_{i+1} \}$ is not in $X_i$. But this contradicts our
assumption.

Part (3): If $R \cup \{b_{i+1} \}$ is in $X_{i+1}$, then
$\aa \cup \bb_{\leq i+1} \cup \cc$ is in $X$. Either $\aa \cup \{b_{i+1} \}$
is in $X_{AB}$ or $\{ b_{i+1} \} \cup \cc$ is in $X_{BC}$. 
In the first case we see that 
\begin{enumerate}
\item $\aa \cup \bbp_{> i+1} \cup \{b^\prime_{i+1}\}$ is in $X_{A \Bp}$.
This is because $\aa \cup \bbp_{> i+1} \in X_{A \Bp}$ and
$\aa \cup \{ b_{i+1} \}$ is in $X_{A B}$.
\item $\bbpp_{> i+1} \cup \cc = \bbpp_{> i} \cup \cc$ is in $X_{\Bpp C}$.
\item $\aa \cup \bb_{\leq i} \cup \cc$ is in $X$.
\end{enumerate}

Thus $R \cup \{ \bp_{i+1} \}$ is in
$X_i$. 

  In the other case, when $\{ b_{i+1} \} \cup \cc$ is in $X_{BC}$,
we find that $R \cup \{ \bpp_{i+1} \}$ is in $X_i$.
\end{proof}

Now consider a graph whose set of vertices is a disjoint
union $A_0 \cup \cdots \cup A_n$, and such that each edge in this graph is
between $A_{i-1}$ and $A_i$ for some $i$. Let $X_i$ be the simplicial
complex on $A_{i-1} \cup A_i$ with Stanley-Reisner ideal the
edge ideal given by the edges between $A_{i-1}$ and $A_i$.
Let $X$ be the simplicial complex whose Stanley-Reisner ideal is
generated by products $x_{a_0} x_{a_1} \cdots x_{a_n}$
where $a_i \in A_i$ and $\{a_{i-1}, a_i \}$ are edges in the graph.

\begin{corollary} \label{Cor:Top:Htpjoin}
The simplicial complex $X$ is homotopy equivalent to the 
join $X_1 * X_2 * \cdots * X_n$. 
\end{corollary}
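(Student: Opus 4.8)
The plan is to prove the statement by induction on $n$, using the theorem of the previous subsection as the inductive engine. For $n=1$ there is nothing to do: the Stanley--Reisner ideal of $X$ is generated by the edge monomials $x_{a_0}x_{a_1}$, so $X=X_1$. Throughout, whenever we form a join of complexes whose vertex sets overlap (as $X_i$ and $X_{i+1}$ do in $A_i$) we tacitly replace each factor by an isomorphic copy on a fresh vertex set, exactly as in the statement of the preceding Theorem; by the remark in Section \ref{Subsec:join} this does not affect homotopy type.

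For the inductive step, assume $n\geq 2$. I would apply the preceding Theorem with
\[ A=A_0,\qquad B=A_1,\qquad C=A_2\cup\cdots\cup A_n, \]
taking $X_{AB}:=X_1$ (the edge-ideal complex on $A_0\cup A_1$) and $X_{BC}:=X'$, where $X'$ is the complex on $A_1\cup\cdots\cup A_n$ whose Stanley--Reisner ideal $I'$ is generated by the monomials $x_{a_1}x_{a_2}\cdots x_{a_n}$ with $\{a_{j-1},a_j\}$ an edge for every $j$ — that is, $X'$ is precisely the ``$X$'' of the statement for the induced subgraph on $A_1,\dots,A_n$. First I would check the hypotheses of the Theorem: every minimal generator of $I_{AB}=I_1$ is an edge monomial $x_{a_0}x_{a_1}$, and every minimal generator of $I_{BC}=I'$ is a path monomial $x_{a_1}\cdots x_{a_n}$; each of these meets the layer $B=A_1$ in exactly one vertex, so none is divisible by a product of two distinct $B$-variables, as required.

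Next I would identify the complex $X$ produced by the Theorem from this data with the complex $X$ of the Corollary. Since no generator of $I_{AB}$ is supported on $A$ alone and no generator of $I_{BC}$ is supported on $C$ alone, only the ``mixed'' generators $x_{\aa}x_{b_i}x_{\cc}$ remain in the generating set of $I$ described in the Theorem; here $x_{\aa}x_{b_i}\in I_{AB}$ forces $x_{\aa}x_{b_i}=x_{a_0}x_{a_1}$ (so $\aa=\{a_0\}$, $b_i=a_1$) and $x_{b_i}x_{\cc}\in I_{BC}$ forces $x_{b_i}x_{\cc}=x_{a_1}x_{a_2}\cdots x_{a_n}$ (so $\cc=\{a_2,\dots,a_n\}$), and the product is the path monomial $x_{a_0}x_{a_1}\cdots x_{a_n}$; conversely every such path monomial arises this way. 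Hence the Theorem's $X$ and the Corollary's $X$ have the same Stanley--Reisner ideal, so they coincide. The Theorem then gives $X\simeq X_{A\Bp}*X_{\Bpp C}\cong X_1*X'$, and the inductive hypothesis applied to the subgraph on $A_1,\dots,A_n$ (which has one fewer part) yields $X'\simeq X_2*\cdots*X_n$; since joining with the fixed complex $X_1$ preserves homotopy equivalence (Section \ref{Subsec:join}), we conclude $X\simeq X_1*X_2*\cdots*X_n$, completing the induction.

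The only step that is not bookkeeping is the identification of the two complexes called $X$: one must verify that, for this choice of $I_{AB}$ and $I_{BC}$, the generating set supplied by the Theorem collapses to exactly the path monomials $x_{a_0}\cdots x_{a_n}$. This rests on the single observation — which is also precisely what makes the Theorem applicable — that a path monomial and an edge monomial each meet the contracted middle layer $A_1$ in a single vertex, so the ``$A$-only'' and ``$C$-only'' generator types of the Theorem are vacuous here.
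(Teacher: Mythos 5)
Your proof is correct and follows essentially the same route as the paper: an induction on the number of layers whose engine is the preceding theorem, the only (immaterial) difference being that you peel off $X_1$ from the left and apply the theorem with the path complex on $A_1\cup\cdots\cup A_n$ as $X_{BC}$, whereas the paper accumulates the path complex $X_{\leq i}$ on the left and appends the edge complex $X_{i+1}$ as $X_{BC}$. Your explicit verification that the theorem's hypotheses hold and that its combined complex has the same Stanley--Reisner ideal as the path complex is a welcome bit of care that the paper leaves implicit.
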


\begin{proof} 
Let $X_{\leq i}$ be the simplicial complex whose monomials
correspond to the edge paths from $A_0$ to $A_i$. 
Then $X_{\leq 1} = X_1$ and $X_{\leq n} = X$. We argue by induction. 
By the previous theorem $X_{\leq i} * X_{i+1}$ is homotopy 
equivalent to $X_{\leq i+1}$. By induction $X_{\leq i}$
is homotopy equivalent to $X_1 * X_2 * \cdots * X_i$ and
so $X_{\leq i+1}$ is homotopy equivalent to $X_1 * \cdots * X_{i+1}$. 
\end{proof}

\section{Resolutions of letterplace ideals}
\label{sec:Betti}
 In Section \ref{Sec:Pre} we introduced the $n$'th letterplace ideal
$L(n,P)$ of a poset $P$. In this section we investigate the 
resolution  of $L(n,P)$. By all likelihood it is not possible to give 
this in explicit form
since a resolution of $L(n,P)$ in general seems to be as complicated as a 
resolution  of a Stanley-Reisner ring. 
For instance the resolution of 
$L(n,P)$ may depend on the characteristic, see Remark \ref{chardep} below. 

  Nevertheless we shall show how to calculate the multigraded Betti 
numbers of $L(n,P)$ in a way that substantially reduces the computation
compared to Hochster's formula. If $P$ is a union of no more than
$c$ chains, then every multigraded Betti number can be computed from
a simplicial complex on no more than $c$ vertices. For the class
of posets whose Hasse diagram is a rooted tree, we show there
is a simple inductive procedure for computing the graded Betti numbers.

\subsection{Betti numbers of simplicial complexes}
We first recall Hochster's formula for the  multigraded Betti numbers
of a simplicial complex. So let $X$ be a simplicial complex on the 
vertex set $V$. Let $I_X$ be the Stanley-Reisner ideal of $X$ in 
$\kr[x_v; v \in V]$. If $R \sus V$ we let $X_{|R}$ be the restricted 
simplicial complex consisting of all $F \in X$ with $F \sus R$. 

Let 
\[ I_X \vpil F_0 \vpil F_1 \vpil \cdots  \]
be the minimal free resolution of $I_X$. Since $I_X$ is a squarefree 
ideal, we can write each 
\[ F_i = \bigoplus_{R \sus V} S(-R)^{\beta_{i,R}}. \]
The number $\beta_{i,R} = \beta_{i,R}(I_X)$  is the multigraded Betti
number of $I_X$ of degree $R$ and homological degree $i$. 

\begin{theorem}[Hochster] The Betti number $\beta_{i,R}(I_X)$ is
the dimension as a $\kr$-vector space of the reduced cohomology group
\[ \tH^{|R|-i-2}(X_{|R}, \kr).\]
\end{theorem}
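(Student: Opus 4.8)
The plan is to reduce Hochster's formula to the standard computation of $\Tor$ via the Koszul complex and Taylor-type arguments, but the cleanest route is to compare the minimal free resolution of $I_X$ with the simplicial cochain complexes of the restricted complexes $X_{|R}$. First I would fix a multidegree $R \sus V$ and observe that, since $I_X$ is squarefree, the multigraded piece of $\Tor_i^S(I_X, \kr)$ in degree $R$ computes $\beta_{i,R}(I_X)$; indeed, tensoring the minimal free resolution $F_\bullet$ with $\kr$ kills all differentials (minimality), so $\beta_{i,R}(I_X) = \dim_\kr \Tor_i^S(I_X,\kr)_R$. Then I would pass to the quotient ring by using the short exact sequence $0 \to I_X \to S \to S/I_X \to 0$, which gives $\Tor_i^S(I_X,\kr)_R \iso \Tor_{i+1}^S(S/I_X,\kr)_R$ for $i \geq 0$ (the relevant connecting maps are isomorphisms away from homological degree $0$, and in degree $R \neq \emptyset$ one checks the boundary terms vanish). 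So it suffices to identify $\Tor_{j}^S(S/I_X,\kr)_R$ with $\tH^{|R|-j-1}(X_{|R},\kr)$ for $j = i+1$.

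Next I would compute $\Tor_\bullet^S(S/I_X,\kr)$ using the Koszul complex $K_\bullet = K_\bullet(x_v; v \in V)$ resolving $\kr$ over $S$. Tensoring $K_\bullet$ with $S/I_X$ yields a complex whose $R$-graded strand, for a squarefree multidegree $R$, is spanned by Koszul basis elements $e_W$ with $W \sus R$ such that the monomial $x_{R \setminus W}$ is nonzero in $S/I_X$, i.e.\ such that $R \setminus W \in X$, equivalently $R \setminus W \in X_{|R}$. Re-indexing by $\sigma = R \setminus W \in X_{|R}$ (so $|W| = |R| - |\sigma|$), this strand becomes, up to a sign reconciliation of the Koszul differential with the simplicial coboundary, precisely the augmented simplicial cochain complex $\tC^\bullet(X_{|R};\kr)$ with a degree shift: homological degree $j$ in the Koszul complex corresponds to cochain degree $|\sigma| - 1 = |R| - j - 1$. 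Hence $\Tor_j^S(S/I_X,\kr)_R \iso \tH^{|R|-j-1}(X_{|R},\kr)$, and substituting $j = i+1$ gives $\beta_{i,R}(I_X) = \dim_\kr \tH^{|R|-i-2}(X_{|R},\kr)$, as claimed. For $R$ not squarefree all these groups vanish, consistent with $I_X$ being squarefree.

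The main obstacle I expect is the bookkeeping in the second step: getting the identification of the Koszul-with-coefficients differential with the simplicial coboundary exactly right, including the sign conventions (choosing a total order on $V$ to define the cochain signs and matching it to the wedge-product signs in $K_\bullet$), and carefully handling the augmentation in degree $-1$ (the empty face, corresponding to $W = R$, i.e.\ the top Koszul term $e_R$ with coefficient $1 \in S/I_X$). One also has to be slightly careful about the degree-$0$ and top-degree edge cases in the $\Tor$ shift between $I_X$ and $S/I_X$. None of this is deep, but it is the part where a proof can go wrong, so I would state the sign conventions explicitly before running the comparison. An alternative, if one wants to avoid signs altogether, is to cite the standard treatment in \cite{StMi} (e.g.\ the chapter on the Koszul complex and Hochster's formula) and simply indicate that the $R$-strand of $S/I_X \te K_\bullet$ is the (shifted, reduced) cochain complex of $X_{|R}$; that would make the proof essentially a one-paragraph reduction.
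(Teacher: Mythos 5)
The paper does not prove this statement: it recalls Hochster's formula as classical background (it is used via Hochster's original result, and the paper's reference \cite{StMi} contains a full treatment), so there is no internal proof to compare yours against. Your proposal is a correct, self-contained version of the standard argument. The three steps are sound: minimality of the resolution gives $\beta_{i,R}(I_X)=\dim_\kr\Tor_i^S(I_X,\kr)_R$; the long exact sequence for $0\to I_X\to S\to S/I_X\to 0$ gives the shift $\Tor_i^S(I_X,\kr)_R\iso\Tor_{i+1}^S(S/I_X,\kr)_R$, where your caveat about the multidegree $R\neq\emptyset$ is exactly what is needed at homological degree $0$; and the $R$-graded strand of $K_\bullet\otimes_S S/I_X$ has basis $e_W\otimes x_{R\setminus W}$ with $W\sus R$ and $R\setminus W\in X_{|R}$, which after the re-indexing $\sigma=R\setminus W$ is the augmented cochain complex of $X_{|R}$ placed so that Koszul degree $j$ sits in cochain degree $|R|-j-1$ (the term $W=R$ giving the empty face handles the augmentation). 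The sign issues you flag are genuine but routine: fix a total order on $V$ and check the incidence signs agree, or, since $\kr$ is a field and only dimensions are claimed, identify the strand with the chain complex of $X_{|R}$ instead and dualize at the end, which avoids the sign bookkeeping altogether. The only other point worth making explicit is that squarefreeness of $I_X$ is what confines the Betti degrees to squarefree multidegrees $R\sus V$, which is exactly the decomposition $F_i=\bigoplus_{R\sus V}S(-R)^{\beta_{i,R}}$ the paper already assumes; with that said, your reduction is complete and agrees with the standard treatment.
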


We denote by $\Delta(n,P)$ the simplicial complex corresponding
to the Stanley-Reisner ideal $L(n,P)$. Its set of vertices is 
$[n] \times P$. It follows from the theorem above that for $R \sus
[n] \times P$ the Betti number $\beta_{i,R}(L(n,P))$ is the dimension of
\[ \tH^{|R| -i -2}(\Delta(n,P)_{|R}, \kr). \]

The following is noteworthy although we do not use it here.
\begin{fact} The simplicial complex $\Delta(n,P)$ is a ball
of codimension $|P|$ in the simplex on the vertex set $P \times [n]$,
\cite[Thm. 5.1]{DFN-COLP} (with one exception: when $P$ is an antichain). 
The boundary of $\Delta(n,P)$ is thus a simplicial sphere, 
see \cite[Sec. 5]{DFN-COLP} for more on this.
\end{fact}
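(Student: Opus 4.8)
The plan is to realise $\Delta := \Delta(n,P)$ as a shellable pseudomanifold and then decide, from its free ridges, whether it is a ball or a sphere. First I would pin down the facets. Since $L(n,P)$ and $L(P,n)$ are Alexander dual, the facets of $\Delta$ are the complements of the minimal generators $m_{\Gamma\psi}$ of $L(P,n)$: after the obvious identification $P \times [n] \cong [n] \times P$, for $\psi \in \Hom(P,[n])$ set $F_\psi := ([n] \times P) \setminus \{(\psi(p),p) : p \in P\}$, so that $\psi \mapsto F_\psi$ is a bijection from $\Hom(P,[n])$ onto the facets of $\Delta$. Each $F_\psi$ has $(n-1)|P|$ vertices, so $\Delta$ is pure of dimension $(n-1)|P|-1$, i.e.\ of codimension $|P|$ in the simplex on $[n]\times P$; this already yields the asserted dimension statement.

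Next I would establish two facts. \emph{(a) $\Delta$ is a pseudomanifold.} A ridge is of the form $\rho = F_\psi \setminus \{(i_0,p_0)\}$ with $i_0 \neq \psi(p_0)$, and a facet $F_{\psi'}$ contains $\rho$ exactly when the graph $\{(\psi'(p),p) : p \in P\}$ is contained in $\{(\psi(p),p) : p \in P\} \cup \{(i_0,p_0)\}$; comparing these sets column by column forces $\psi' = \psi$ off $p_0$ and $\psi'(p_0) \in \{\psi(p_0), i_0\}$, so $\rho$ lies in at most two facets, namely $F_\psi$ and, when it happens to be isotone, $F_{\psi[p_0 \mapsto i_0]}$. \emph{(b) $\Delta$ is shellable.} Fix a linear extension $p_1 < \cdots < p_m$ of $P$ and order $\Hom(P,[n])$ lexicographically by the words $(\psi(p_1),\dots,\psi(p_m))$. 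Given $\psi' < \psi$, let $p_0$ be the first place where the two words differ, so $\psi'(p_0) < \psi(p_0)$, and put $\psi'' := \psi[p_0 \mapsto \psi'(p_0)]$. Then $\psi''$ is isotone (for $q < p_0$ in $P$ one has $\psi(q) = \psi'(q) \leq \psi'(p_0)$, and for $q > p_0$ one has $\psi'(p_0) < \psi(p_0) \leq \psi(q)$), $\psi'' < \psi$ lexicographically, $F_\psi \setminus F_{\psi''}$ is the single vertex $(\psi'(p_0),p_0)$, and $F_\psi \cap F_{\psi'} \subseteq F_{\psi''}$ (the only vertex of $\{(\psi''(p),p) : p \in P\}$ lying in $F_\psi$ is $(\psi'(p_0),p_0)$, which is not in $F_{\psi'}$). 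This is precisely the shelling condition for $F_\psi$ relative to the earlier facets.

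Finally I would invoke the standard dichotomy: a shellable pseudomanifold is a PL ball if it has a free ridge (a ridge contained in a unique facet) and a PL sphere otherwise, and the boundary of a shellable ball is itself a shellable sphere (see e.g.\ \cite{Bj}). If $P$ is an antichain every map $P \to [n]$ is isotone, so by (a) every ridge lies in exactly two facets and $\Delta$ is a sphere. If $P$ is not an antichain, choose $p < q$ with $p \neq q$: for the constant map $\psi \equiv n$ one has $F_\psi = [n-1] \times P$, and for any $i_0 < n$ the map $\psi[q \mapsto i_0]$ fails to be isotone (it would require $n = \psi(p) \leq i_0$), so $F_\psi \setminus \{(i_0,q)\}$ is a free ridge. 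Hence $\Delta$ is a ball of codimension $|P|$ and $\partial\Delta$ is a sphere of dimension $(n-1)|P|-2$.

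The routine but fiddly part is verifying the shelling condition for \emph{every} pair of facets, and handling the degenerate cases $n = 1$ and $|P| \leq 1$. The more serious point is that \cite[Thm.~5.1 and Sec.~5]{DFN-COLP} proves this through the explicit resolution of the co-letterplace ideal $L(P,n)$, identifying $\partial\Delta(n,P)$ with a generalised Bier sphere built from the Alexander-dual pair $(\Delta(n,P), \Delta(P,n))$; extracting the ball/sphere statement and the fine structure of the boundary from that description is where the real work lies, and a shelling argument as above would have to be reconciled with it.
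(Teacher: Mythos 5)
Your argument is sound, and it takes a genuinely different route from the paper: the paper gives no proof at all for this Fact, outsourcing it to \cite[Thm.~5.1]{DFN-COLP}, where the ball/sphere statement is extracted from the explicit resolution of the co-letterplace ideal $L(P,n)$ and the generalized Bier-sphere description of the boundary. You instead argue directly: Alexander duality (properties 2 and 4 of Section \ref{Sec:Pre}) identifies the facets of $\Delta(n,P)$ with the complements of the graphs of isotone maps $\psi \in \Hom(P,[n])$, giving purity and codimension $|P|$; your ridge analysis correctly shows each ridge lies in at most two facets, the second existing exactly when the modified map $\psi[p_0 \mapsto i_0]$ is still isotone; your lex order on $\Hom(P,[n])$ verifies the standard (Bj\"orner--Wachs) shelling criterion, and is in essence the linear-quotients order on the generators of $L(P,n)$ transported through duality; and the Danaraj--Klee theorem (a shellable pseudomanifold is a ball or a sphere according to whether it has a free ridge) then yields the dichotomy, with your explicit free ridge $F_\psi \setminus \{(i_0,q)\}$ for $\psi \equiv n$ and $p<q$ showing the boundary is nonempty precisely when $P$ is not an antichain. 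What your approach buys is a short, self-contained proof of exactly the Fact as stated; what the cited approach buys is more, namely the concrete identification of the boundary sphere as a generalized Bier sphere. Two small corrections: drop the parenthetical claim that the boundary of a shellable ball is a \emph{shellable} sphere --- it is not needed (the boundary of a PL ball is a PL sphere, which is all the Fact asserts) and shellability of such boundaries is a delicate issue you should not lean on; and the degenerate case $n=1$ is a genuine exception of a different kind, since $\Delta(1,P) = \{\emptyset\}$ for every $P$, so your ball/antichain dichotomy (and the free-ridge construction, which needs some $i_0 < n$) should be asserted only for $n \geq 2$, as you already flag.
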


\subsection{Multigraded Betti numbers of $L(2,P)$}

For $R \sus [2] \times P$ write $R = \{ 1 \} \times R_1 \cup
\{ 2 \} \times R_2$. Since both $R_1$ and $R_2$ are subsets of $P$, they
each get an induced poset structure.
Let $\min(R_2)$ be the minimal elements of $R_2$ and $\max(R_1)$ 
the maximal elements of $R_1$. We then get a bipartite graph on 
$\max(R_1) \cup \min(R_2)$ with edges $\{p_1,p_2\}$ (where $p_1 \in \max(R_1)$
and $p_2 \in \min(R_2)$) if $ p_1 \leq p_2$ in the poset $P$.
    
Let $X(R)$ be the simplicial complex on $\max(R_1) \cup \min(R_2)$
whose Stanley-Reisner ideal is the edge ideal of this bipartite graph.
Let $Y_2(R)$ be the simplicial complex on $\min(R_2)$ 
consisting of all $B \sus \min(R_2)$ such that $\{a \} \cup B$
is in $X(R)$ for some $a$ in $\max(R_1)$, or alternatively there is some
$a$ in $\max(R_1)$ with no $b$ in $B$ dominating $a$.
This is the simplicial complex constructed as in
Subsection \ref{Subsec:Top:Bip}.
Further let $Y_1(R)$ be the simplicial complex on $\max(R_1)$
consisting of all
subsets $A \sus \max(R_1)$ such that $A \cup \{b\}$ is in $X(R)$ for
some $b \in \min(R_2)$, i.e. no $a \leq b$ for $a \in A$. 


\begin{theorem}
The multigraded Betti number $\beta_{i,R}(L(2,P))$ is the dimension of 
\[ \tH^{|R|-i-2}(X(R),\kr) \iso \tH^{|R|-i-3}(Y_1(R),\kr) \iso
\tH^{|R|-i-3}(Y_2(R), \kr). \]
\end{theorem}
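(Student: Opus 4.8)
The plan is to combine Hochster's formula with the homotopy reductions of Section~\ref{sec:Top}. First I would apply Hochster's formula to $\Delta(2,P)$, obtaining $\beta_{i,R}(L(2,P)) = \dim_\kr \tH^{|R|-i-2}(\Delta(2,P)_{|R},\kr)$, and then identify the restriction: writing $R = \{1\}\times R_1 \cup \{2\}\times R_2$, the minimal non-faces of $\Delta(2,P)_{|R}$ are exactly the generators $x_{1,p_1}x_{2,p_2}$ of $L(2,P)$ with $p_1 \in R_1$, $p_2 \in R_2$, $p_1 \le p_2$. Hence $\Delta(2,P)_{|R}$ is the simplicial complex whose Stanley--Reisner ideal is the edge ideal of the bipartite graph on $R_1 \cup R_2$ with an edge $\{p_1,p_2\}$ whenever $p_1 \leq p_2$ in $P$.

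Second, I would cut this bipartite complex down to $X(R)$ by iterated vertex deletions. If $p \in R_1$ is not maximal, pick $p' \in R_1$ with $p < p'$; then every neighbour of $p'$ in $R_2$ is also a neighbour of $p$, so Lemma~\ref{TopLemaap} (applied with $a = p'$ and $a' = p$, the vertex with the larger neighbour set) shows that deleting $p$ preserves the homotopy type. Deleting the non-maximal elements of $R_1$ one at a time --- each intermediate complex being still the edge ideal complex of an induced bipartite subgraph, so that the lemma keeps applying --- reduces $R_1$ to $\max(R_1)$; the symmetric argument with the two sides of the bipartite graph interchanged (using that a non-minimal $q \in R_2$ dominates some $q' \in R_2$, so that one deletes $q$) reduces $R_2$ to $\min(R_2)$. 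The resulting complex is exactly $X(R)$, whence $\tH^{|R|-i-2}(\Delta(2,P)_{|R},\kr) \iso \tH^{|R|-i-2}(X(R),\kr)$; if at some stage a vertex becomes incident to no edge the complex is a cone, which is consistent.

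Third, I would invoke Proposition~\ref{TopProSusp} twice, once for each side of the bipartite graph underlying $X(R)$. With $A = \max(R_1)$ and $B = \min(R_2)$, the complex produced there is precisely $Y_2(R)$, so $X(R)$ is homotopy equivalent to the suspension of $Y_2(R)$; with $A = \min(R_2)$ and $B = \max(R_1)$ instead (the graph is symmetric once the partial order is forgotten), the complex produced is $Y_1(R)$, so $X(R)$ is also homotopy equivalent to the suspension of $Y_1(R)$. Since suspension shifts reduced cohomology by one degree --- for instance by Proposition~\ref{pro:TopJoin} with one join factor equal to $S^0$ --- this gives $\tH^{|R|-i-2}(X(R),\kr) \iso \tH^{|R|-i-3}(Y_1(R),\kr) \iso \tH^{|R|-i-3}(Y_2(R),\kr)$, and composing with the identifications above yields the asserted chain of isomorphisms.

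The only delicate point --- and the step where a direction error is most likely --- is the second one: one must check that each successive deletion keeps the complex an edge ideal complex of a bipartite graph so that Lemma~\ref{TopLemaap} applies, and that the neighbourhood comparison points the way the lemma requires, namely that one always deletes the vertex with the \emph{larger} neighbour set, which on the $R_1$ side is the \emph{smaller} poset element and on the $R_2$ side the \emph{larger} one. Everything else is a direct appeal to results already proved.
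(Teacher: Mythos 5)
Your proposal is correct and follows essentially the same route as the paper: Hochster's formula, then Lemma \ref{TopLemaap} applied repeatedly to pare $R_1\cup R_2$ down to $\max(R_1)\cup\min(R_2)$, then Proposition \ref{TopProSusp} on each side of the bipartite graph to realize $X(R)$ as a suspension of $Y_1(R)$ and of $Y_2(R)$. You merely spell out more explicitly than the paper the iterated deletions and the direction in which the neighbourhood-containment hypothesis of Lemma \ref{TopLemaap} is applied, and that bookkeeping is right.
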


\begin{proof}
By Hochster's formula $\beta_{i,R}(L(2,P))$ is
\[ \dim_{\kr} \tH^{|R| -i-2}(\Delta(2,P)_{|R},\kr).\]
The Stanley-Reisner ring of $\Delta(2,P)_{|R}$ is the edge ideal
of the bipartite graph on $R_1 \cup R_2$ whose edges are $\{r_1,r_2\}$
where $r_1 \leq r_2$. By Lemma \ref{TopLemaap}, $\Del(2,P)_{|R}$ is 
homotopy equivalent to $X(R)$, and by Proposition \ref{TopProSusp}
$X(R)$ is homotopy equivalent to the suspension of $Y_1(R)$, and also
of $Y_2(R)$ from which the above follows.
\end{proof}

The following shows that although $P$ may be large, there is a 
uniform bound on the size of the simplicial complexes one uses to compute
the multigraded Betti numbers.

\begin{corollary}
If the poset $P$ is a union of no more than $c$ chains, the multigraded
Betti numbers of $L(2,P)$ 
can be computed as the homology of simplicial complexes
with no more than $c$ vertices.
\end{corollary}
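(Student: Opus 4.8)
The plan is to read the bound straight off the second isomorphism in the preceding theorem, namely $\beta_{i,R}(L(2,P)) = \dim_\kr \tH^{|R|-i-3}(Y_1(R),\kr)$, and then to control the number of vertices of $Y_1(R)$. By construction $Y_1(R)$ is a simplicial complex whose vertex set is $\max(R_1)$, the set of maximal elements of the induced subposet $R_1 \sus P$. So the whole corollary reduces to the purely combinatorial claim that $|\max(R_1)| \leq c$ for every $R = \{1\} \times R_1 \cup \{2\} \times R_2 \sus [2]\times P$.

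That claim is a pigeonhole argument. Write $P = C_1 \cup \cdots \cup C_c$ as a union of $c$ chains. The set $\max(R_1)$ is an antichain of $P$: two distinct maximal elements of $R_1$ are incomparable in the induced order, hence incomparable in $P$. Now if an antichain $A$ of $P$ met some chain $C_j$ in two distinct elements $p, q$, then $p$ and $q$ would be comparable in $C_j$, contradicting that $A$ is an antichain; so $A$ meets each $C_j$ in at most one element and $|A| \leq c$. Taking $A = \max(R_1)$ gives $|\max(R_1)| \leq c$, so $Y_1(R)$ has at most $c$ vertices, and $\beta_{i,R}(L(2,P))$ is the dimension of one of its reduced cohomology groups.

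Since $R$ was arbitrary, this establishes the statement. I do not expect any real obstacle: the topological work — passing from $\Delta(2,P)_{|R}$ to the bipartite complex $X(R)$ via Lemma \ref{TopLemaap} and then to the suspension of $Y_1(R)$ via Proposition \ref{TopProSusp} — has already been carried out in the proof of the theorem, and all that remains is to record the antichain bound. Degenerate cases (for instance $R_1 = \emptyset$, so that $Y_1(R)$ has no vertices, or $R$ lying entirely in one of the two levels) only shrink $Y_1(R)$ and cause no trouble; symmetrically one could instead use $Y_2(R)$ on $\min(R_2)$, which is likewise an antichain of $P$ of size $\leq c$.
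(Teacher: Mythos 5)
Your argument is correct and coincides with the paper's own proof: both read the Betti number off the cohomology of $Y_1(R)$ (or $Y_2(R)$) and observe that its vertex set $\max(R_1)$ (resp.\ $\min(R_2)$) is an antichain, hence of size at most $c$ when $P$ is a union of $c$ chains. Your explicit pigeonhole justification of the antichain bound is just a spelled-out version of the step the paper states in one line.
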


\begin{proof} If $P$ is the union of no more than $c$ chains,
then every antichain in $P$ has cardinality
less than or equal to $c$. 
The simplicial complexes $Y_1(R)$ and $Y_2(R)$ are then each complexes
on no more than $c$ vertices, since $\max(R_1)$ and $\min(R_2)$ are
antichains.
\end{proof}

Given a simplicial complex $Z$, denote by $\Sigma^k{Z}$ its $k$'th iterated suspension. We now introduce another reduction that helps us compute $X(R)$ by considering a smaller simplicial complex.

\begin{proposition} \label{iterated_suspension}
The complex $X(R)$ equals $\Sigma^{|\max(R_1) \cap \min(R_2)|}C$, where $C$ is the subcomplex of $X(R)$ induced by $(\max(R_1) \cup \min(R_2)) \setminus (\max(R_1) \cap \min(R_2))$. As a consequence, 
\[\tH^i(X(R), \kr) \cong \tH^{i-|\max(R_1) \cap \min(R_2)|}(C, \kr)\] for all $i$.
\end{proposition}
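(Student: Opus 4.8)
The plan is to isolate the elements of $\max(R_1) \cap \min(R_2)$ one at a time and show each one contributes a suspension. Let $p \in \max(R_1) \cap \min(R_2)$. The key observation is that $p$ occurs both as a vertex of type ``$\max(R_1)$'' and as a vertex of type ``$\min(R_2)$'' in the bipartite graph defining $X(R)$, but as a single vertex of $X(R)$. I claim $p$ is \emph{not} joined by an edge to any other vertex of $X(R)$: an edge of the bipartite graph goes between $\max(R_1)$ and $\min(R_2)$ and records a relation $q_1 \leq q_2$ with $q_1 \in \max(R_1)$, $q_2 \in \min(R_2)$. If $p$ played the role of $q_1$ and were joined to some $q_2 \in \min(R_2)$ with $p \leq q_2$, then since $p \in \min(R_2)$ and $p \leq q_2$ with $q_2$ minimal in $R_2$, we get $p = q_2$; similarly if $p$ plays the role of $q_2$. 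So in the bipartite graph $p$ has no incident edge, hence $\{p\}$ is not in the Stanley--Reisner ideal, i.e. $\{p\}$ together with every face of $X(R)$ not containing $p$ forms a face. But more is true: since $p$ is an isolated vertex of the bipartite graph, $\{p\}$ can be adjoined to \emph{every} face of $X(R)$. Actually the precise statement I want is that $X(R)$ restricted to $(\max(R_1)\cup\min(R_2))\setminus\{p\}$, call it $X'$, satisfies $X(R) = X' * \{\emptyset, \{p\}\}$, i.e. $X(R)$ is the cone on $X'$ with apex $p$ --- no, that is not a suspension. Let me re-examine: I expect that $p$ contributes not a cone but a suspension, so I must track \emph{two} copies.

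Here is the corrected approach. Go back to the homotopy model: by the theorem of the previous subsection, $\beta_{i,R}$ is computed from $\Delta(2,P)_{|R}$, the simplicial complex of the edge ideal of the bipartite graph on $R_1 \cup R_2$ (with both copies of $P$'s elements distinct). In \emph{that} graph, an element $p \in \max(R_1)\cap\min(R_2)$ appears as two distinct vertices, $p^{(1)} \in R_1$ and $p^{(2)} \in R_2$, and the edge $\{p^{(1)}, p^{(2)}\}$ is present (since $p \leq p$). The point is that $p^{(1)}$ has, among vertices of $R_2$, only the neighbour $p^{(2)}$ and possibly others $q_2 \in R_2$ with $p \leq q_2$; but after applying Lemma~\ref{TopLemaap} to pass down to $\max(R_1)$ and $\min(R_2)$, the only neighbour of $p^{(1)}$ that survives in $\min(R_2)$ is $p^{(2)} = p$, and symmetrically. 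So in $X(R)$ the vertex $p$ (coming from $\max(R_1)$) is connected only to the vertex $p$ (coming from $\min(R_2)$) --- but these have been identified to a single vertex of $X(R)$, which is therefore isolated. An isolated vertex $p$ in a simplicial complex means $X(R) = \tilde\Sigma$ of the subcomplex $X(R)\setminus\{p\}$? No: an isolated vertex gives $X(R) = (X(R)\setminus p) \sqcup \{p\}$, a disjoint union with a point, which \emph{is} the (unreduced) suspension is wrong too. The honest statement: adding an isolated vertex to $Z$ gives $Z \sqcup \mathrm{pt}$, and $\tilde H^i(Z \sqcup \mathrm{pt}) \cong \tilde H^i(Z) \oplus \tilde H^i(\mathrm{pt})$, not a shift. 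So the isolated-vertex picture must be wrong, and instead each $p$ must be adjoinable to every face, making $p$ a \emph{cone point} --- but then $X(R)$ would be contractible whenever $\max(R_1)\cap\min(R_2)\neq\emptyset$, contradicting the suspension claim.

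Resolving this is the main obstacle, and the resolution is that one must \emph{not} pass to $\max(R_1)\cup\min(R_2)$ first: instead work with $\Delta(2,P)_{|R}$ directly, where $p^{(1)}$ and $p^{(2)}$ are genuinely distinct vertices, and the edge $\{p^{(1)},p^{(2)}\}$ is present. The correct mechanism is: the subcomplex of the edge-ideal complex on the remaining vertices, joined with the boundary of the edge $\{p^{(1)},p^{(2)}\}$ (i.e. the two-point space $\{p^{(1)}\}\sqcup\{p^{(2)}\}$), recovers $\Delta(2,P)_{|R}$ up to homotopy, because a face may contain $p^{(1)}$ \emph{or} $p^{(2)}$ but never both (they are the two ends of an edge of the graph, hence a nonface). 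Joining with $S^0 = \{p^{(1)}\}\sqcup\{p^{(2)}\}$ is exactly the suspension $\Sigma$, by Proposition~\ref{pro:TopJoin}. So the plan is: (1) show that for each $p \in \max(R_1)\cap\min(R_2)$, the edge-ideal complex is the join of $S^0_p = \{p^{(1)}\}\sqcup\{p^{(2)}\}$ with the edge-ideal complex on the remaining $R_1 \cup R_2$ vertices --- this uses that $\{p^{(1)},p^{(2)}\}$ is a minimal nonface and that no edge outside this pair involves $p^{(1)}$ or $p^{(2)}$ after the Lemma~\ref{TopLemaap} reductions have been applied to the \emph{other} vertices but keeping $p^{(1)}, p^{(2)}$; (2) iterate over all $p \in \max(R_1)\cap\min(R_2)$, peeling off one $S^0$ factor, hence one suspension, each time; (3) after removing all such $p$, the remaining complex is exactly $C = X(R)_{|(\max(R_1)\cup\min(R_2))\setminus(\max(R_1)\cap\min(R_2))}$, possibly after one more application of Lemma~\ref{TopLemaap} which does not change homotopy type; (4) conclude $X(R) \simeq \Sigma^{|\max(R_1)\cap\min(R_2)|} C$ via Proposition~\ref{pro:TopJoin} (since $\Sigma^k(-) = S^0 * \cdots * S^0 * (-)$), and read off the cohomology shift. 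The delicate point to verify carefully in step (1) is that an element $q \in \max(R_1)$ with $q \neq p$ cannot be joined to $p^{(2)}$ in $\min(R_2)$: this needs $q \not\leq p$, which holds because $p \in \min(R_2) \subseteq R_2$, $q \in R_1$, and if $q \leq p$ then... actually $q \leq p$ is perfectly possible in the poset, so one must instead argue that such an edge $\{q, p\}$, if present, is swallowed by the Lemma~\ref{TopLemaap} reduction comparing $p$'s neighbourhood to $q$'s --- this is the technical heart and where care is required.
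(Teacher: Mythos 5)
You converge on the right mechanism---each $b \in \max(R_1)\cap\min(R_2)$ contributes an $S^0$ join factor, hence one suspension---but the step you yourself flag as ``the technical heart'' is left unproved, and your assessment of it is incorrect. In $X(R)$ every such $b$ appears as two distinct vertices, $(1,b)$ on the $\max(R_1)$ side and $(2,b)$ on the $\min(R_2)$ side of the bipartition, and what you need is that the only edge of the bipartite graph meeting either of these vertices is $\{(1,b),(2,b)\}$. You claim that $q\le b$ for some $q\in\max(R_1)$ with $q\ne b$ ``is perfectly possible'' and would have to be swallowed by a Lemma~\ref{TopLemaap} reduction. It is not possible: you only used $b\in\min(R_2)\subseteq R_2$, but $b$ also lies in $\max(R_1)$, so $b\in R_1$ and $q\le b$ with $q\ne b$ would contradict the maximality of $q$ in $R_1$; symmetrically, $b\le r$ with $r\in\min(R_2)$, $r\ne b$, would contradict the minimality of $r$ in $R_2$, since $b\in R_2$. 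This two-line observation is precisely the paper's proof: it yields $I_{X(R)} = I_C + (x_{1,b}x_{2,b} \mid b\in\max(R_1)\cap\min(R_2))$, so $X(R)$ is literally the join of $C$ with the two-point complexes $\{(1,b)\}\sqcup\{(2,b)\}$, i.e.\ the iterated suspension, and the cohomology shift is the standard suspension isomorphism (or Proposition~\ref{pro:TopJoin}).

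Beyond that missing argument, the repair you propose would not prove the statement as formulated. Lemma~\ref{TopLemaap} deletes a vertex whose neighbourhood contains another vertex's neighbourhood; it cannot remove a single unwanted edge, and it only produces a homotopy equivalence onto a complex on a smaller vertex set, whereas the proposition asserts an equality $X(R)=\Sigma^{|\max(R_1)\cap\min(R_2)|}C$ with $C$ the induced subcomplex on the prescribed vertex set. Your detour through $\Delta(2,P)_{|R}$, and the long first half about isolated vertices versus cone points, stems from reading $X(R)$ as having a single vertex for each element of $\max(R_1)\cap\min(R_2)$; the bipartite construction keeps the two sides disjoint, as the variables $x_{1,b}$ and $x_{2,b}$ in the Stanley--Reisner description make explicit. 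Once that reading is fixed and the antichain observation above is made, no homotopy-theoretic reduction is needed at all.
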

\begin{proof}
Given any $b \in \max(R_1) \cap \min(R_2)$, one has that $b$ is comparable only to itself inside $\max(R_1) \cup \min(R_2)$. For this reason, considering the Stanley-Reisner ideals of $X(R)$ and $C$, one has that \[I_{X(R)} = I_C + (x_{1, b}x_{2, b} \mid b \in \max(R_1) \cap \min(R_2)).\]
The rest of the claim is a general fact about suspensions: for any simplicial complex $Z$ one has that $\tH^{i+k}(\Sigma^k{Z}, \kr) \cong \tH^i(Z, \kr)$ for all $i$.
\end{proof}

We record the following corollary for use in the proof of Proposition \ref{pro:MultiHighhom}.

\begin{corollary} \label{lem:MultiIntersect}
If $\max(R_1) \neq \min(R_2)$
then $\tH^p(Y_i(R), \kr) = 0$ for $p \leq |\max(R_1)  \cap \min(R_2)| - 2$.
If $\max(R_1) = \min(R_2)$ then 
\[ \tH^p(Y_i(R),\kr) = \begin{cases} \kr & p = |\max(R_1)| - 2 \\
                               0 & p \neq |\max(R_1)| - 2
                   \end{cases}
\]
\end{corollary}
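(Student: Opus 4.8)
The plan is to reduce everything to the suspension-theoretic description already at hand. Recall from the previous theorem that $\tH^p(Y_i(R),\kr) \cong \tH^{p+1}(X(R),\kr)$, so the statements about $Y_i(R)$ are equivalent to statements about $X(R)$ shifted by one. By Proposition \ref{iterated_suspension} we have $X(R) = \Sigma^k C$ with $k = |\max(R_1)\cap\min(R_2)|$, hence $\tH^{p+1}(X(R),\kr)\cong \tH^{p+1-k}(C,\kr)$. So the first assertion reduces to: if $\max(R_1)\neq\min(R_2)$ then $\tH^{p+1-k}(C,\kr)=0$ for $p \leq k-2$, i.e.\ $\tH^j(C,\kr)=0$ for $j \leq -1$. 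This says exactly that $C$ is nonempty and connected in the sense of having vanishing reduced cohomology in degrees $-1$ and (if one wants it below) $0$; in fact only $j=-1$ matters when $p\le k-2$ gives $j\le -1$. The key point will be that when $\max(R_1)\neq\min(R_2)$ the vertex set of $C$, namely $(\max(R_1)\cup\min(R_2))\setminus(\max(R_1)\cap\min(R_2))$, is nonempty, and moreover $C$ is contractible: pick a vertex $a \in \max(R_1)$ not in $\min(R_2)$ (or symmetrically a vertex of $\min(R_2)$ not in $\max(R_1)$) and argue $C$ is a cone on it, or more simply invoke the lemma that the simplicial complex of an edge ideal of a bipartite graph with an isolated vertex is contractible — one should check that such an isolated vertex exists, which is where the inequality $\max(R_1)\neq\min(R_2)$ gets used.

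First I would handle the easy direction: if one of the two antichains strictly contains the other or they are merely unequal, I would show the bipartite graph underlying $C$ has a vertex incident to no edge. Concretely, if $a\in\max(R_1)\setminus\min(R_2)$, then within $\max(R_1)\cup\min(R_2)$ the only elements comparable to $a$ are... — here one has to be a little careful, because $a$ could be $\le$ some element of $\min(R_2)$, so $a$ need not be isolated. The cleaner route is: the statement we need, $\tH^j(C,\kr)=0$ for $j\le -1$, is just the assertion that $C$ has at least one vertex, i.e.\ $C\neq\{\emptyset\}$. That is immediate from $\max(R_1)\neq\min(R_2)$, since then the symmetric difference is nonempty. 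So the first assertion is actually trivial once Proposition \ref{iterated_suspension} is in place — the only content is bookkeeping the index shift $p \mapsto p+1 \mapsto p+1-k$ and noting $p\le k-2 \iff p+1-k\le -1$, together with the fact that $\tH^{-1}$ of a nonempty complex vanishes.

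For the second assertion, when $\max(R_1)=\min(R_2)$, we have $k = |\max(R_1)|$ and $C$ is the subcomplex of $X(R)$ induced on the empty vertex set, i.e.\ $C$ is the void-or-empty complex. Here I would note that every $b\in\max(R_1)=\min(R_2)$ contributes a quadratic generator $x_{1,b}x_{2,b}$ to $I_{X(R)}$ (since $b\le b$), so $X(R)$ is the $k$-fold suspension of the empty complex $\{\emptyset\}$, namely the boundary of the $(k-1)$-simplex on those $k$ "doubled" vertices — equivalently an $S^{k-2}$. Thus $\tH^p(X(R),\kr)=\kr$ for $p=k-2$ and $0$ otherwise, and shifting by one gives $\tH^p(Y_i(R),\kr)=\kr$ exactly for $p = k-2 = |\max(R_1)|-2$, as claimed. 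The only mild subtlety is the degenerate cases $k=0$ and $k=1$: when $\max(R_1)=\min(R_2)=\emptyset$ one should check $R_1=R_2=\emptyset$ forces $R=\emptyset$, a case typically excluded or handled separately, and when $k=1$ the "sphere" $S^{-1}$ is the empty complex with $\tH^{-1}=\kr$, consistent with the formula. I expect the main obstacle — really the only thing requiring care — is making the index shifts and the edge-case conventions ($\tH^{-1}$, the empty complex, $\Sigma^0$) line up correctly rather than any genuine topological difficulty; the substantive geometric input is entirely contained in Proposition \ref{iterated_suspension} and the suspension identity it already records.
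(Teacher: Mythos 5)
Your overall strategy is exactly the intended one: the corollary is meant to follow from Proposition \ref{iterated_suspension} together with the suspension relation $\tH^{p}(Y_i(R),\kr)\iso\tH^{p+1}(X(R),\kr)$, and your treatment of the first case is correct (for $p\le k-2$ with $k=|\max(R_1)\cap\min(R_2)|$ one lands in $\tH^{j}(C,\kr)$ with $j\le -1$, which vanishes because $C$ has at least one vertex, namely any element of the nonempty symmetric difference, every singleton being a face since the Stanley--Reisner ideal is quadratic).

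The second case, however, is not proved as written: your intermediate claims are inconsistent with each other and with your own shift. When $\max(R_1)=\min(R_2)$ has cardinality $k$, each $b$ in this set contributes the two vertices $(1,b)$ and $(2,b)$ joined only by the forbidden edge $x_{1,b}x_{2,b}$, so $X(R)$ is the join of $k$ copies of $S^0$ on $2k$ vertices, i.e.\ the boundary of the $k$-dimensional cross-polytope, a sphere $S^{k-1}$ --- not ``the boundary of the $(k-1)$-simplex on $k$ vertices, equivalently $S^{k-2}$'' as you assert (for $k=1$ your identification would make $X(R)$ the complex $\{\emptyset\}$, whereas it is two isolated points). From your claim $\tH^{p}(X(R),\kr)=\kr$ only at $p=k-2$, the shift $\tH^{p}(Y_i(R),\kr)\iso\tH^{p+1}(X(R),\kr)$ would give the nonvanishing degree $p=k-3$, contradicting the statement; you reach the correct $p=k-2$ only because two off-by-one slips cancel. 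The clean route, which also avoids any sphere identification, is to apply Proposition \ref{iterated_suspension} directly with $C=\{\emptyset\}$: then $\tH^{i}(X(R),\kr)\iso\tH^{i-k}(\{\emptyset\},\kr)$, which is $\kr$ exactly when $i=k-1$, and the suspension shift then gives $\tH^{p}(Y_i(R),\kr)=\kr$ exactly when $p=k-2=|\max(R_1)|-2$, as claimed.
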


\begin{remark} \label{chardep}
Betti numbers of $L(2, P)$ are characteristic-dependent in general.
In fact, one can ``simulate the behaviour of any simplicial complex'' in this context.
The construction, essentially found in \cite{Dal}, goes as follows: assume $\Delta$ is a simplicial complex on $[n]$ with facets $F_1, \ldots, F_m$ 
and take the bipartite graph $G$ with bipartition 
given by $A \cup B = \{a_1, \ldots, a_n\} \cup \{b_1, \ldots, b_m\}$ and such that $\{a_i, b_j\}$ is an edge of $G$ 
precisely when $i$ does not belong to $F_j$ in $\Delta$. Consider then the poset $P$ whose nontrivial 
covering relations are precisely those of the form $a_i < b_j$ where $\{a_i, b_j\}$ is an edge of $G$.
Now, if $R = \{1\} \times A \cup \{2\} \times B$, one checks that $\Delta(2, P)_{|R}$ is the simplicial complex 
whose Stanley-Reisner ideal is the edge ideal of $G$. By Proposition \ref{TopProSusp} and by construction, 
this complex turns out to be homotopy equivalent to the suspension 
of the original complex $\Delta$. Since Hochster's formula 
holds, to get an example where Betti numbers of $L(2, P)$ are indeed characteristic-dependent 
it now suffices to consider as our $\Delta$ the usual triangulation of the real projective plane.
\end{remark}

We now define a transitive order on nonempty subsets of the poset.
\begin{definition} Let $A$ and $B$ be nonempty subsets of $P$.
We write $A \leq B$ if:
\begin{itemize}
\item For every maximal element $a$ in $A$ there is a minimal element 
$b \in B$ such that $a \leq b$. 
\item For every minimal element $b$ in $B$ there is a maximal element $
a \in A$ such that $a \leq b$. 
\end{itemize} 
Note that this is not a partial order since normally not $A \leq A$
(so it is not reflexive). In fact, $A \leq A$ iff $A$ is an antichain.
\end{definition}

The generators of the ideal $L(2,P)$ correspond to pairs
$p_1 \leq p_2$ in the poset $P$. These generators give
the multigraded Betti numbers of $L(2,P)$ in homological degree zero.
The following provides an analogue for the higher multigraded Betti numbers,
whose values will depend on the topology
of the poset relative to the multidegrees.

\begin{corollary} \label{cor:BettiL2P}
If not $R_1 \leq R_2$ then  $\Del(2,P)_{|R}$ is
contractible, and so all the reduced homology of $\Del(2,P)_{|R}$ vanishes.
Hence $\beta_{i,R}$ can be nonzero only if $R_1 \leq R_2$.
\end{corollary}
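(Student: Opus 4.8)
The plan is to show that $\Delta(2,P)_{|R}$ is a cone whenever $R_1 \leq R_2$ fails, since a cone is contractible and hence has vanishing reduced homology. By the theorem computing $\beta_{i,R}(L(2,P))$ via $\tH^{|R|-i-2}(\Delta(2,P)_{|R},\kr)$, the vanishing of all reduced homology forces $\beta_{i,R} = 0$ for all $i$, which is the desired conclusion.

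First I would unwind the negation of $R_1 \leq R_2$. By the definition of the order on nonempty subsets, failure means that at least one of the two bullet conditions fails. So either there is a maximal element $a \in \max(R_1)$ with no $b \in \min(R_2)$ satisfying $a \leq b$, or there is a minimal element $b \in \min(R_2)$ with no $a \in \max(R_1)$ satisfying $a \leq b$ (one should also handle the degenerate possibility that $R_1$ or $R_2$ is empty, in which case $\Delta(2,P)_{|R}$ is visibly a simplex, hence contractible). By symmetry it suffices to treat the first case. I would then recall from the subsection on multigraded Betti numbers of $L(2,P)$ that $\Delta(2,P)_{|R}$ is the simplicial complex whose Stanley--Reisner ideal is the edge ideal of the bipartite graph on $R_1 \cup R_2$ with edges $\{r_1, r_2\}$ for $r_1 \leq r_2$.

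The key observation is then this: if $a \in \max(R_1)$ is not below any element of $\min(R_2)$, then $a$ is not below any element of $R_2$ at all (anything in $R_2$ lies above some element of $\min(R_2)$, so if $a$ were below it, $a$ would be below a minimal element by transitivity — actually more carefully, $a \le r_2$ with $r_2 \ge b$ for some $b \in \min R_2$ does not directly give $a \le b$; instead one argues that if $a \le r_2$ for some $r_2 \in R_2$, pick $b \in \min(R_2)$ with $b \le r_2$; this need not force $a \le b$). Let me instead use the cleaner route: I want a vertex of $\Delta(2,P)_{|R}$ incident to no edge, i.e. a vertex whose corresponding variable does not divide any quadratic generator of the edge ideal; such a vertex is a cone point. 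The vertex $(1,a)$ with $a \in \max(R_1)$ works precisely when $a \not\le r_2$ for every $r_2 \in R_2$, not merely for every $r_2 \in \min(R_2)$. So in the detailed argument I would first reduce, via Lemma \ref{TopLemaap}, to the homotopy equivalent complex $X(R)$ on $\max(R_1) \cup \min(R_2)$ — there the vertex $a \in \max(R_1)$ is incident to no edge by hypothesis, hence $X(R)$ is a cone on $a$ and is contractible (this is exactly the first lemma of Subsection \ref{Subsec:Top:Bip}). Then $\Delta(2,P)_{|R}$, being homotopy equivalent to a contractible complex, is contractible.

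The main obstacle I anticipate is purely bookkeeping: being careful that the reduction to $\max(R_1) \cup \min(R_2)$ genuinely preserves the relevant hypothesis and that the degenerate cases (empty $R_1$, empty $R_2$, or $a$ being simultaneously in $\max(R_1) \cap \min(R_2)$) are not silently excluded. There is no deep topology here — once $X(R)$ is seen to be a cone, Proposition \ref{TopProHtp} / Lemma \ref{TopLemaap} plus Hochster's formula finish it immediately. I would close by remarking that since $\tH^j(\Delta(2,P)_{|R},\kr) = 0$ for all $j$, Hochster's formula gives $\beta_{i,R}(L(2,P)) = 0$ for all $i$, so a nonzero multigraded Betti number in degree $R$ requires $R_1 \leq R_2$.
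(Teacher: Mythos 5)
Your proof is correct and follows essentially the same route as the paper: both pass (via Lemma \ref{TopLemaap}, as in the preceding theorem) to the complex on $\max(R_1)\cup\min(R_2)$ and deduce contractibility, hence vanishing of all reduced homology and of all $\beta_{i,R}$ by Hochster's formula, from the failure of domination. The only cosmetic difference is the final observation: you note the undominated vertex is isolated in $X(R)$, so $X(R)$ is a cone, while the paper notes that $Y_1(R)$ or $Y_2(R)$ is the full simplex and uses the suspension description from the theorem --- either way the conclusion is immediate.
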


\begin{proof} If some $p_1 \in \max(R_1)$ is not dominated by any $p_2 \in 
\min(R_2)$,
the simplicial complex $Y_2(R)$ 
is the full simplex on $\min(R_2)$, 
since no $q \in \min(R_2)$ is such that $p_1 \leq q$.
Analogously if some $p_2 \in \min(R_2)$ does not dominate any $p_1 \in 
\max(R_1)$, then $Y_1(R)$ is the full simplex on $\max(R_1)$.
\end{proof}

\subsection{Multigraded Betti numbers of $L(n,P)$}
For $R \sus [n] \times P$ write 
$R = \cup_{i=1}^n \{i \} \times R_i$.
Each $R_i$ gets an induced poset structure from $P$. We get a graph
on $R$ whose edges are between $\{i\} \times R_i$ and $\{i+1\} \times R_{i+1}$
for $i = 1, \ldots, n-1$. The edges are  pairs $\{ (i,p_i), (i+1,p_{i+1}) \}$
with $p_i \leq p_{i+1}$.
The Stanley-Reisner ideal of $\Delta(n,P)_{|R}$ is generated by 
products $x_{1,p_1}x_{2,p_2} \ldots x_{n,p_n}$ where pairs of successive indices
are edges of the graph.  Let $\Delta_i(n,P)_{|R}$ be the simplicial complex
whose Stanley-Reisner ideal is the edge ideal of the bipartite graph
on $\{i\} \times R_i \cup \{ i+1 \} \times R_{i+1}$. 

Let $X_i(R)$ be the simplicial  complex which is the restriction of 
$\Delta_i(n,P)_{|R}$ to $\max(R_i) \cup \min(R_{i+1})$.

\begin{proposition} Let $R \sus [n] \times P$. The restriction
$\Delta(n,P)_{|R}$ is homotopy equivalent to the join
\[ X_1(R) * X_2(R) * \cdots * X_{n-1}(R). \]
\end{proposition}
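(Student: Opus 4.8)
The plan is to reduce this statement to Corollary~\ref{Cor:Top:Htpjoin} by identifying $\Delta(n,P)_{|R}$ (up to homotopy) with the complex $X$ appearing there. First I would reinterpret the ground data: the vertex set of $\Delta(n,P)_{|R}$ is the disjoint union $A_0 \cup A_1 \cup \cdots \cup A_{n-1}$, where I set $A_{i-1} = \{i\} \times R_i$ for $i = 1, \ldots, n$ (a relabeling needed only so the indices line up with the corollary's $A_0, \ldots, A_n$; one could equally keep $n$ layers and speak of a join of $n-1$ complexes directly). The edges between $A_{i-1}$ and $A_i$ are exactly the pairs $\{(i,p_i),(i+1,p_{i+1})\}$ with $p_i \leq p_{i+1}$ in $P$. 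By the description of the Stanley--Reisner ideal of $\Delta(n,P)_{|R}$ given just above the statement, its minimal generators are precisely the products $x_{1,p_1} x_{2,p_2} \cdots x_{n,p_n}$ in which every pair of successive factors is an edge of this layered graph; this is exactly the form of the Stanley--Reisner ideal of the complex $X$ in Corollary~\ref{Cor:Top:Htpjoin}. Hence $\Delta(n,P)_{|R}$ \emph{is} the complex $X$ of that corollary, and therefore it is homotopy equivalent to $X_1 * X_2 * \cdots * X_{n-1}$, where $X_i$ is the complex on $A_{i-1} \cup A_i = \{i\} \times R_i \cup \{i+1\} \times R_{i+1}$ whose Stanley--Reisner ideal is the edge ideal of the bipartite graph between those two layers. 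But this $X_i$ is exactly $\Delta_i(n,P)_{|R}$.

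The remaining gap is that the proposition asserts homotopy equivalence to the join of the smaller complexes $X_i(R)$ (the restrictions of $\Delta_i(n,P)_{|R}$ to $\max(R_i) \cup \min(R_{i+1})$), not to the join of the $\Delta_i(n,P)_{|R}$ themselves. To close it I would invoke Lemma~\ref{TopLemaap}: within the bipartite graph between $\{i\}\times R_i$ and $\{i+1\}\times R_{i+1}$, any non-maximal element of $R_i$ has its neighbourhood contained in the neighbourhood of the maximal element above it, and dually any non-minimal element of $R_{i+1}$ is dominated by a minimal one whose neighbourhood it contains; repeatedly deleting such vertices shows $\Delta_i(n,P)_{|R}$ is homotopy equivalent to its restriction $X_i(R)$ to $\max(R_i) \cup \min(R_{i+1})$. (This is the same reduction used in the proof of the $L(2,P)$ theorem.) Finally, by the remark in Subsection~\ref{Subsec:join}, replacing one factor of a join by a homotopy equivalent complex yields a homotopy equivalent join, so iterating over $i = 1, \ldots, n-1$ gives
\[
\Delta(n,P)_{|R} \simeq \Delta_1(n,P)_{|R} * \cdots * \Delta_{n-1}(n,P)_{|R} \simeq X_1(R) * \cdots * X_{n-1}(R).
\]

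I do not expect any serious obstacle here: the proof is essentially bookkeeping, checking that the combinatorial description of $\Delta(n,P)_{|R}$ matches the hypotheses of Corollary~\ref{Cor:Top:Htpjoin} verbatim. The one point requiring a little care is the disjointness hypothesis in that corollary and in Proposition~\ref{pro:TopJoin} — the vertex sets $\{i\}\times R_i$ are genuinely disjoint in $[n]\times P$ even when the underlying subsets $R_i$ of $P$ overlap, so the join is literally the join of complexes on disjoint vertex sets, and no identification of vertices is needed. I would also note in passing that one must check the $X_i$ in Corollary~\ref{Cor:Top:Htpjoin} have no ``horizontal'' quadratic generators within a single layer; this is automatic since all generators of the edge ideal connect \emph{consecutive} layers, so the hypothesis of the preceding Theorem is satisfied.
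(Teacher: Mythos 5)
Your strategy is exactly the paper's: the printed proof is a one-line citation of Corollary \ref{Cor:Top:Htpjoin}, and the steps you spell out --- matching the Stanley--Reisner ideal of $\Delta(n,P)_{|R}$ with the path-monomial ideal of the layered graph, reducing each factor $\Delta_i(n,P)_{|R}$ to $X_i(R)$ via Lemma \ref{TopLemaap}, and using that the join respects homotopy equivalence (Subsection \ref{Subsec:join}) --- are precisely the intended implicit content, carried out for $n=2$ in the proof of the theorem on $L(2,P)$. Two of your supporting claims, however, are stated the wrong way around and should be corrected. First, the neighbourhood containments are reversed: if $p \in R_i$ is non-maximal and $p < p'$ with $p'$ maximal in $R_i$, then the neighbourhood of $p'$ in $R_{i+1}$ (the $q$ with $p' \leq q$) is \emph{contained in} that of $p$, not the other way; since Lemma \ref{TopLemaap} deletes the vertex whose neighbourhood contains the other's, it is the containment $N(p') \subseteq N(p)$ that licenses deleting the non-maximal vertex $p$ --- with your containment as written, the lemma would delete $p'$ instead. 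Dually, a non-minimal $q \in R_{i+1}$ dominates (is not dominated by) some minimal $q'$, and $N(q') \subseteq N(q)$ allows deleting $q$.

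Second, your disjointness remark is false. The factor $X_i(R)$ lives on $\{i\}\times\max(R_i)\cup\{i+1\}\times\min(R_{i+1})$ and $X_{i+1}(R)$ on $\{i+1\}\times\max(R_{i+1})\cup\{i+2\}\times\min(R_{i+2})$, so consecutive factors share the vertices $\{i+1\}\times\bigl(\max(R_{i+1})\cap\min(R_{i+1})\bigr)$, which is nonempty whenever $R_{i+1}$ contains an element incomparable to the rest of $R_{i+1}$ (e.g.\ when $R_{i+1}$ is an antichain --- exactly the situation relevant to the last linear strand and to Proposition \ref{pro:MultiHighhom}); the same overlap occurs for the complexes $\Delta_i(n,P)_{|R}$, which share the whole layer $\{i+1\}\times R_{i+1}$. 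The join in the statement, as in Corollary \ref{Cor:Top:Htpjoin} itself (whose $X_i$ and $X_{i+1}$ share $A_i$), must therefore be read as the join of disjoint copies of the factors; the point of the theorem preceding the corollary, with its copies $B'$ and $B''$, is precisely that identifying the two copies of the shared hub does not change the homotopy type, so no further argument is needed --- only the correct interpretation. With these two repairs your proof is complete and coincides with the paper's.
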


\begin{proof}
This follows by Corollary \ref{Cor:Top:Htpjoin}.
\end{proof}

Recall the polynomial $\tH(X_i(R),t)$ defined in Subsection \ref{Subsec:join}.
Let $r$ be the cardinality $|R|$ and $\beta(R,t)$ 
the Betti polynomial 
\[ t^r\beta_{0,R} + t^{r-1} \beta_{1,R} + \cdots + t \beta_{r-1,R} \]
where $\beta_{i,R} = \beta_{i,R}(L(n,P))$.
We say that a Betti number $\beta_{i,R}$ is in the {\it $(|R|-i)$-linear strand}
of $L(n,P)$. We thus see that the coefficient of $t^p$ in $\beta(R,t)$ 
above is the Betti number of multigrade $R$ in the $p$-linear
strand of $L(n,P)$. 

\begin{theorem} \label{The:LPR:BettiLnP}
The Betti polynomial $\beta(R)$ of $L(n,P)$ is 
\[ \beta(R,t) = t^n \prod_{i=1}^{n-1}\tH(X_i(R),t). \]
\end{theorem}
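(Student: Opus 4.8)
The plan is to combine the homotopy decomposition of $\Delta(n,P)_{|R}$ with the multiplicativity of reduced cohomology under joins, and then match the resulting polynomial identity against the definition of $\beta(R,t)$ via Hochster's formula. First I would invoke the proposition immediately preceding the theorem: $\Delta(n,P)_{|R}$ is homotopy equivalent to the join $X_1(R) * \cdots * X_{n-1}(R)$, so in particular $\tH^j(\Delta(n,P)_{|R}, \kr) \cong \tH^j(X_1(R)*\cdots*X_{n-1}(R), \kr)$ for all $j$, and hence $t\tH(\Delta(n,P)_{|R}, t) = t\tH(X_1(R)*\cdots*X_{n-1}(R), t)$. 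By Proposition \ref{pro:TopJoin} applied to the $n-1$ complexes $X_i(R)$ (which live on pairwise disjoint vertex sets, since the $i$-th one uses vertices with first coordinate $i$ or $i+1$ restricted to $\max(R_i)\cup\min(R_{i+1})$ — wait, these are not disjoint across consecutive $i$; I will need to be careful here, see the obstacle paragraph), we would get $t\tH(\Delta(n,P)_{|R}, t) = \prod_{i=1}^{n-1} t\tH(X_i(R), t)$.

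Next I would translate the cohomology polynomial into the Betti polynomial. By Hochster's formula, $\beta_{i,R}(L(n,P)) = \dim_\kr \tH^{|R|-i-2}(\Delta(n,P)_{|R}, \kr)$, so writing $r = |R|$ and reindexing, the Betti number in the $p$-linear strand (the coefficient of $t^p$ in $\beta(R,t)$, corresponding to $|R| - i = p$, i.e. $i = r - p$) equals $\dim_\kr \tH^{p-2}(\Delta(n,P)_{|R}, \kr)$. Therefore $\beta(R,t) = \sum_p t^p \dim_\kr \tH^{p-2}(\Delta(n,P)_{|R}, \kr) = t^2 \sum_{j \geq -1} t^j \dim_\kr \tH^j(\Delta(n,P)_{|R}, \kr) = t^2 \tH(\Delta(n,P)_{|R}, t)$, which is $t \cdot \big(t\tH(\Delta(n,P)_{|R}, t)\big)$. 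Combining with the previous step, $\beta(R,t) = t \prod_{i=1}^{n-1} t\tH(X_i(R),t) = t \cdot t^{n-1} \prod_{i=1}^{n-1}\tH(X_i(R),t) = t^n \prod_{i=1}^{n-1}\tH(X_i(R),t)$, as claimed.

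The main obstacle I anticipate is the disjointness hypothesis in Proposition \ref{pro:TopJoin}: as literally defined, $X_i(R)$ is a complex on $\max(R_i) \cup \min(R_{i+1})$ viewed inside $[n]\times P$, and $X_i(R)$ and $X_{i+1}(R)$ share the vertices indexed by $\{i+1\} \times \min(R_{i+1})$ — unless one reads the join $X_1(R)*\cdots*X_{n-1}(R)$ as a join of abstract complexes on formally disjointified copies of the vertex sets, which is exactly the convention used in the preceding theorem and Corollary \ref{Cor:Top:Htpjoin} (with $\Bp, \Bpp$ copies of $B$). So the clean way to phrase it is: the preceding proposition already produces a homotopy equivalence with a join of complexes on disjoint vertex sets, and Proposition \ref{pro:TopJoin} applies to that join directly. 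I would make this explicit to avoid any ambiguity — the homotopy type, and hence all reduced cohomology, only depends on the abstract complexes $X_i(R)$, so $\tH(X_i(R),t)$ is well-defined regardless of which copy of the vertices one uses.

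One small bookkeeping point to verify carefully: the lowest-degree term. The constant term of $\prod_{i=1}^{n-1} \tH(X_i(R),t)$ is $\prod_i \dim_\kr \tH^{-1}(X_i(R),\kr)$, which is $1$ exactly when every $X_i(R)$ is the void complex (empty set not a face), i.e. when each bipartite graph restricted to $\max(R_i)\cup\min(R_{i+1})$ has no isolated-vertex obstruction in the relevant sense; this should correspond on the Betti side to $\beta_{0,R} \neq 0$, i.e. $R$ being the support of a generator or an lcm in homological degree $0$. I would just note that the degree shift $t^n$ absorbs this correctly, since the coefficient of $t^n$ in $t^n\prod_i \tH(X_i(R),t)$ is $\prod_i \tH^{-1}(X_i(R))$, matching $\beta_{0,R} = \dim_\kr \tH^{r-2}(\Delta(n,P)_{|R})$ via $r = |R|$ and the join formula $\tH^{r-2}$ of a join of $n-1$ complexes on a total of $r$ vertices decomposing as a product over the pieces. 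No further subtlety is expected; the argument is essentially a formal consequence of the two cited results plus Hochster's formula.
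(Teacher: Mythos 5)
Your proposal is correct and takes essentially the same route as the paper's own proof: the homotopy equivalence of $\Delta(n,P)_{|R}$ with the join $X_1(R)*\cdots*X_{n-1}(R)$, the multiplicativity of $t\tH(-,t)$ from Proposition \ref{pro:TopJoin}, and Hochster's formula in the form $\beta(R,t)=t^2\tH(\Delta(n,P)_{|R},t)$. Your added care about reading the join on formally disjoint copies of the vertex sets and about the degree bookkeeping simply makes explicit what the paper leaves implicit.
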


\begin{proof} 
By the previous proposition and Proposition \ref{pro:TopJoin}
\[ t \tH(\Delta(n,P)_{|R},t) = \prod_{i = 1}^{n-1} t \tH(X_i(R),t). \]
By Hochster's formula
\[ \beta(R,t) = t^2 \tH( \Delta(n,P)_{|R}, t) \]
and so the statement follows.
\end{proof}

Note that $X_i(R)$ is homotopic to the suspension of the complex 
$Y_{i,\max}(R)$ on $\max(R_i)$ constructed as in Subsection 
\ref{Subsec:Top:Bip}.
It is similarly homotopic to the suspension $Y_{i+1,\min}(R)$ on $\min(R_{i+1})$
also constructed there.

\begin{corollary} 
The Betti number $\beta_{i,R}(L(n,P))$ can be nonzero only if 
$R_1 \leq R_2 \leq \cdots \leq R_n$. 
\end{corollary}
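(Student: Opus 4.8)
The plan is to deduce this from Theorem \ref{The:LPR:BettiLnP} together with Corollary \ref{cor:BettiL2P} applied to each consecutive pair of subsets. First I would recall from Theorem \ref{The:LPR:BettiLnP} that
\[ \beta(R,t) = t^n \prod_{i=1}^{n-1}\tH(X_i(R),t), \]
so that $\beta_{i,R}(L(n,P))$ can be nonzero only if every factor $\tH(X_i(R),t)$ is nonzero, i.e. only if each $X_i(R)$ has some nonvanishing reduced cohomology. So it suffices to show: if not $R_i \leq R_{i+1}$, then $X_i(R)$ is contractible (equivalently all its reduced cohomology vanishes), forcing the corresponding factor, hence the whole product, hence $\beta_{i,R}$, to be zero.

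Next I would observe that $X_i(R)$ is by definition the simplicial complex on $\max(R_i) \cup \min(R_{i+1})$ whose Stanley-Reisner ideal is the edge ideal of the bipartite graph with edges $\{p_i, p_{i+1}\}$ for $p_i \in \max(R_i)$, $p_{i+1} \in \min(R_{i+1})$ with $p_i \leq p_{i+1}$. This is exactly the complex $X(R')$ associated to the pair $R' = \{1\} \times R_i \cup \{2\} \times R_{i+1}$ in the $L(2,P)$ setting (after Lemma \ref{TopLemaap} reduces $\Delta(2,P)_{|R'}$ to $X(R')$). Then I would invoke the argument in the proof of Corollary \ref{cor:BettiL2P}: if not $R_i \leq R_{i+1}$, then either some $p_i \in \max(R_i)$ is dominated by no element of $\min(R_{i+1})$, in which case $Y_{i,\max}$-type complex on $\min(R_{i+1})$ is a full simplex, or symmetrically some $p_{i+1} \in \min(R_{i+1})$ dominates no element of $\max(R_i)$, making the complex on $\max(R_i)$ a full simplex; either way one of the suspension-bases $Y$ is contractible, so $X_i(R)$, being homotopy equivalent to its suspension, is contractible. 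Hence $\tH(X_i(R),t) = 0$.

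Putting this together: if $R_1 \leq R_2 \leq \cdots \leq R_n$ fails, then $R_i \not\leq R_{i+1}$ for some $i$, the factor $\tH(X_i(R),t)$ vanishes, the product in Theorem \ref{The:LPR:BettiLnP} is zero, and therefore $\beta_{i,R}(L(n,P)) = 0$ for all homological degrees $i$. I do not expect any real obstacle here; the only care needed is the bookkeeping identification of $X_i(R)$ with an $L(2,P)$-type complex $X(R')$ so that Corollary \ref{cor:BettiL2P} (and the contractibility criterion behind it) applies verbatim to the pair $(R_i, R_{i+1})$. Everything else is a formal consequence of the product formula.
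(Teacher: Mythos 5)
Your proposal is correct and takes essentially the same route as the paper: the paper's proof also deduces the statement from the product formula of Theorem \ref{The:LPR:BettiLnP} together with the contractibility argument behind Corollary \ref{cor:BettiL2P}, so that a nonzero $\beta_{i,R}$ forces every factor $\tH(X_i(R),t)$ to be nonvanishing and hence $R_i \leq R_{i+1}$ for every $i$. The only difference is that you spell out the identification of $X_i(R)$ with the $L(2,P)$-type complex of the pair $(R_i,R_{i+1})$, which the paper leaves implicit.
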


\begin{proof}
This follows from Corollary \ref{cor:BettiL2P} because each 
$\tH(X_i(R),t)$ must be nonvanishing.
\end{proof}

\subsection{Extremal properties of the Betti table of $L(n,P)$}

For the letterplace ideal $L(n,P)$ we now
characterize i) the Betti numbers in the first
and last linear strands, ii) the Betti numbers in the
highest homological degree $|P|-1$, and iii) starting homological
degrees of the linear strands. As a consequence we answer
a problem posed in \cite[Sec.7]{Katt} on when the ideal $L(n,P)$ is
a Cohen-Macaulay level ideal.

The ideal $L(n,P)$ is 
generated by the  monomials $x_{1,p_1}x_{2,p_2}\cdots x_{n,p_n}$
with $p_1 \leq p_2 \leq \cdots \leq p_n$ and so the first linear
strand is the $n$-linear strand. The following characterizes
the Betti numbers in this strand.

\begin{proposition}
The multidegree $R$ occurs in the first $n$-linear strand 
in the resolution of $L(n,P)$ iff for every $i = 1, \ldots, n-1$ 
and every pair of elements 
$p \in R_i$ and $q \in  R_{i+1}$ we have $p \leq q$. 
Then the Betti number $\beta_{i,R}(L(n,P)) = 1$.
\end{proposition}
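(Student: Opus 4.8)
The plan is to apply Theorem \ref{The:LPR:BettiLnP} and analyze when the product $\prod_{i=1}^{n-1} \tH(X_i(R), t)$ contributes a nonzero coefficient of $t^0$ to $t^{-n}\beta(R,t)$, i.e. when $\beta(R,t)$ has a nonzero term $t^n \cdot c$ (homological degree $|R| - n$). Since $\beta(R,t) = t^n \prod_{i=1}^{n-1} \tH(X_i(R), t)$, the lowest-degree term in $t$ on the right-hand side is $t^n$ times the product of the lowest-degree terms of each $\tH(X_i(R), t)$. Each $X_i(R)$ is a nonempty simplicial complex (it sits on $\max(R_i) \cup \min(R_{i+1})$, which is nonempty as long as $R_i, R_{i+1}$ are nonempty), so $\tH(X_i(R), t)$ has lowest-degree term in degree $\geq -1$; it equals $t^{-1}$ exactly when $X_i(R)$ is contractible (or rather, when $\tH^{-1} \neq 0$, i.e. $X_i(R)$ is void — but $X_i(R)$ always contains the empty face, so $\tH^{-1} = 0$), and otherwise lowest degree $\geq 0$. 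Wait — I need to be careful: $\tH^{-1}(X, \kr) = \kr$ only when $X$ is the void complex $\{\}$ (no faces at all), which does not happen here. So $\tH(X_i(R), t)$ has lowest term in degree $\geq 0$, with equality (degree $0$, coefficient $1$) precisely when $X_i(R)$ is contractible.

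**Next I would** pin down the contribution to the $n$-linear strand. The coefficient of $t^0$ in $t^{-n}\beta(R,t) = \prod_{i=1}^{n-1}\tH(X_i(R),t)$ is nonzero iff every $\tH(X_i(R),t)$ has a nonzero constant term, i.e. iff $\tH^0(X_i(R),\kr) \neq 0$ for all $i$, and when this holds it equals $\prod_i \dim_\kr \tH^0(X_i(R),\kr)$. Now $X_i(R)$ is the simplicial complex on $\max(R_i) \cup \min(R_{i+1})$ whose Stanley–Reisner ideal is the edge ideal of the bipartite graph with edges $\{p,q\}$ for $p \in \max(R_i)$, $q \in \min(R_{i+1})$, $p \leq q$. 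By Proposition \ref{TopProSusp}, $X_i(R)$ is homotopy equivalent to the suspension of $Y_{i+1,\min}(R)$, so $\tH^0(X_i(R),\kr) \cong \tH^{-1}(Y_{i+1,\min}(R),\kr)$, which is nonzero iff $Y_{i+1,\min}(R)$ is the void complex. But $Y_{i+1,\min}(R)$ on $\min(R_{i+1})$ consists of all $B \subseteq \min(R_{i+1})$ such that $B \cup \{a\}$ is a face of $X_i(R)$ for some $a \in \max(R_i)$; in particular $\emptyset$ is a face of $Y_{i+1,\min}(R)$ as long as $\{a\}$ is a face of $X_i(R)$ for some $a$, i.e. as long as some $a \in \max(R_i)$ has no neighbour in $\min(R_{i+1})$. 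So $Y_{i+1,\min}(R)$ is void iff every $a \in \max(R_i)$ has a neighbour in $\min(R_{i+1})$, i.e. every maximal element of $R_i$ is $\leq$ some minimal element of $R_{i+1}$; and when void, $\tH^{-1} = \kr$, giving $\dim = 1$.

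**Then I would** convert the condition ``every maximal element of $R_i$ lies below some minimal element of $R_{i+1}$'' into the stated condition ``every $p \in R_i$ and $q \in R_{i+1}$ satisfy $p \leq q$.'' One direction is immediate: if every element of $R_i$ is below every element of $R_{i+1}$, then certainly every maximal $a$ of $R_i$ is below every (hence some) minimal $b$ of $R_{i+1}$. For the converse, suppose for every maximal $a$ of $R_i$ there is a minimal $b$ of $R_{i+1}$ with $a \leq b$. Take arbitrary $p \in R_i$, $q \in R_{i+1}$; pick a maximal $a \in R_i$ with $p \leq a$ and a minimal $b \in R_{i+1}$ with $b \leq q$. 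We have $a \leq b'$ for some minimal $b'$ of $R_{i+1}$ — but this gives $p \leq a \leq b'$, not obviously $p \leq q$. So I must use the $X_i(R)$ structure more carefully: $X_i(R)$ being contractible (equivalently $Y_{i+1,\min}(R)$ void) is the honest condition, but the proposition claims the stronger pointwise inequality. **The main obstacle** — and the step to handle with care — is that contractibility of $X_i(R)$ alone does \emph{not} force every $p \in R_i$ to be below every $q \in R_{i+1}$; rather, I believe the correct reading is that we also need the symmetric condition from $Y_{i,\max}(R)$ being void (every minimal element of $R_{i+1}$ dominates some maximal element of $R_i$), and the interplay with the poset structure on $R_i$ and $R_{i+1}$ together with the bipartite graph being \emph{complete} on $\max(R_i) \times \min(R_{i+1})$. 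Concretely: $X_i(R)$ is contractible iff it is a cone, and one shows the bipartite edge-ideal complex on $\max(R_i) \cup \min(R_{i+1})$ is contractible exactly when some vertex is isolated; combined with Corollary \ref{cor:BettiL2P} (which already restricts to $R_1 \leq \cdots \leq R_n$) one deduces the graph on $\max(R_i) \times \min(R_{i+1})$ is complete bipartite, and then transitivity of $\leq$ in $P$ propagates $p \leq a \leq b \leq q$ for any $p \in R_i$, $q \in R_{i+1}$ once we know $a \leq b$ for \emph{all} maximal $a$ and \emph{all} minimal $b$. So the clean route is: (i) $\beta_{i,R}$ lies in the $n$-linear strand iff each $X_i(R)$ is contractible; (ii) a bipartite edge-ideal complex on $\max(R_i)\cup\min(R_{i+1})$ is contractible iff the bipartite graph is complete (no isolated vertex forces it, and completeness makes it a cone); (iii) completeness of the graph on $\max(R_i) \times \min(R_{i+1})$ plus transitivity gives $p \leq q$ for all $p \in R_i$, $q \in R_{i+1}$; (iv) in that case each $\tH^0(X_i(R),\kr) = \kr$, so the product is $1$, giving $\beta_{i,R}(L(n,P)) = 1$.
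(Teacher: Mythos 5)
Your overall strategy (apply Theorem \ref{The:LPR:BettiLnP} and ask when each $\tH(X_i(R),t)$ has nonzero constant term) is the paper's, and your paragraph-two criterion ``$\tH^0(X_i(R),\kr)\neq 0$ for all $i$'' is correct. But the topological analysis you build on it contains a genuine error. First, $\tH^{-1}$ is nonzero precisely for the complex $\{\emptyset\}$ (the complex with the empty face and no vertices), not for the void complex; and $\emptyset$ is a face of $Y_{i+1,\min}(R)$ as soon as $\max(R_i)\neq\emptyset$, because the condition ``no edge between $a$ and any element of $B$'' is vacuous for $B=\emptyset$. Hence your derived condition ``every maximal element of $R_i$ is $\leq$ \emph{some} minimal element of $R_{i+1}$'' is not what $\tH^0(X_i(R))\neq 0$ says; the correct condition is that $Y_{i+1,\min}(R)$ (equivalently $Y_{i,\max}(R)$) has \emph{no vertices}, i.e.\ $a\leq b$ for \emph{every} $a\in\max(R_i)$ and \emph{every} $b\in\min(R_{i+1})$ --- the bipartite graph is complete. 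That is exactly what the paper's proof extracts, and the stated condition on all $p\in R_i$, $q\in R_{i+1}$ then follows by transitivity, since each $p$ lies below some maximal element of $R_i$ and each $q$ above some minimal element of $R_{i+1}$.

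Second, your attempted repair in the last paragraph conflates contractibility with nonvanishing of $\tH^0$ and reverses the bipartite characterization. If $X_i(R)$ were contractible, then $\tH(X_i(R),t)=0$ and $R$ would contribute to \emph{no} strand; the first-strand criterion is that $X_i(R)$ is disconnected, i.e.\ homotopy equivalent to $S^0$. For a bipartite edge-ideal complex, an isolated vertex makes the complex a cone and hence contractible (the paper's lemma in Subsection \ref{Subsec:Top:Bip}), whereas a complete bipartite graph makes it the disjoint union of the simplices on $\max(R_i)$ and on $\min(R_{i+1})$ --- disconnected, not a cone. So steps (i) and (ii) of your ``clean route'' are false as stated. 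If you replace ``contractible'' by ``disconnected'' and correct (ii) to ``disconnected iff the graph on $\max(R_i)\cup\min(R_{i+1})$ is complete bipartite,'' your route becomes the paper's argument; your step (iii) (transitivity) is indeed the bridge to the statement, and it also gives the converse and the value $\beta_{|R|-n,R}=1$, since in that case each $\tH(X_i(R),t)=1$, so $\beta(R,t)=t^n$.
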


\begin{proof}
The degree $R$ occurs in the first linear strand iff $|R| = i+n$ and 
$\beta_{i,R}$ is nonzero. Thus $t^n$ occurs in $\beta(R,t)$ with 
nonzero coefficient $\beta_{i,R}$. But then by Theorem 
\ref{The:LPR:BettiLnP} each $\tH(X_i(R),t)$ has nonzero constant term,
i.e. $\tH^0(X_i(R),\kr)$ is nonzero. Since this equals
$\tH^{-1}(Y_{i,\max}(R),\kr)$ this last simplicial complex is the
complex $\{ \emptyset \}$ and so the induced bipartite graph on 
$\max(R_1) \cup \min(R_2)$ is the complete bipartite graph.
\end{proof}

\begin{example}
Let $P = [m]$ be the totally ordered poset on $m$ elements. 
For two sets $A,B \sus [m]$ we now have $A \leq B$ if $a \leq b$
for every $a \in A$ and $b \in B$. Subsets 
$R_1 \leq R_2 \leq \cdots \leq R_n$ of $[m]$ give a multidegree
$R = \cup_i \{i \} \times R_i$ such that $\beta_{|R|-n,R}(L([n],[m])) = 1$.
Moreover every nonzero Betti number of $L([n],[m])$ arises in 
this  manner.
\end{example}

By \cite[Cor. 3.3]{EHM} the regularity of $L(n,P)$ is
$c(n-1) + 1$ where $c$ is the size of the largest antichain in $P$.  
We characterize the multidegrees in this last linear strand.

\begin{proposition}
The multidegree $R$ occurs in the last $(c(n-1) +1)$-linear strand if
for each $i$ one has that $R_i \leq R_{i+1}$, the sets $\max(R_i)$ and $\min(R_{i+1})$ have the
same cardinality $c$ and 
there is a one to one correspondence between these sets
such that the corresponding
pairs of elements are the only comparability relations between these sets.
The Betti number in this case is $\beta_{i,R}(L(n,P)) = 1$.
(Example case: let $R_1 = R_2 = \cdots = R_n$ 
be an antichain of cardinality $c$.) 
\end{proposition}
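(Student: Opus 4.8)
The plan is to compute the Betti polynomial $\beta(R,t)$ via Theorem \ref{The:LPR:BettiLnP} and show that under the stated hypotheses it equals $t^{c(n-1)+1}$, which simultaneously identifies the linear strand (the exponent $c(n-1)+1$ matches the regularity bound from \cite[Cor. 3.3]{EHM}) and shows the relevant Betti number is $1$. Recall $\beta(R,t) = t^n \prod_{i=1}^{n-1} \tH(X_i(R),t)$, so the core task is to evaluate each factor $\tH(X_i(R),t)$ separately. The hypothesis tells us that for each $i$ the sets $\max(R_i)$ and $\min(R_{i+1})$ have the same cardinality $c$, and the comparability relations between them form a perfect matching. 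First I would analyze the bipartite graph defining $X_i(R)$: its edges are the pairs $\{p,q\}$ with $p \in \max(R_i)$, $q \in \min(R_{i+1})$, $p \leq q$. But caution is needed here — the definition of $X_i(R)$ uses the edge ideal, whose generators are the \emph{non-edges}, i.e. the incomparable pairs $p,q$. So I would instead directly describe the complement: by Proposition \ref{iterated_suspension}, since the matched pairs are the only comparabilities, each matched pair contributes to a suspension and the residual complex $C$ on the remaining (incomparable) vertices is a full simplex on $2c - 2c = 0$ vertices when $\max(R_i) \cap \min(R_{i+1})$ is empty; more carefully, I would argue that $X_i(R)$ is homotopy equivalent to the suspension of $Y_{i,\max}(R)$, and that $Y_{i,\max}(R)$ — the complex on $\max(R_i)$ whose faces are subsets $A$ with no element of $A$ below some fixed minimal element — reduces under the perfect-matching hypothesis to the boundary of the simplex on the $c$ vertices of $\max(R_i)$.

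Concretely, I would show that $Y_{i,\max}(R)$ is the boundary complex $\partial\Delta^{c-1}$ of the full simplex on the $c$ elements of $\max(R_i)$: a proper subset $A \subsetneq \max(R_i)$ is a face because, letting $p$ be the (unique, by the matching) element of $\max(R_i)\setminus A$ and $q$ its matched partner in $\min(R_{i+1})$, no element of $A$ is $\leq q$ (the only comparability into $q$ is from $p$), so $A \cup \{q\}$ lies in $X_i(R)$; whereas the full set $\max(R_i)$ is not a face since every $q \in \min(R_{i+1})$ has its matched partner below it. Hence $Y_{i,\max}(R) = \partial\Delta^{c-1}$, which is a $(c-2)$-sphere, so $\tH^{c-2}(Y_{i,\max}(R),\kr) = \kr$ and all other reduced cohomology vanishes. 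Since $X_i(R)$ is homotopy equivalent to the suspension of $Y_{i,\max}(R)$, we get $\tH^{c-1}(X_i(R),\kr) = \kr$ with everything else zero, i.e. $t\tH(X_i(R),t) = t^c$, so $\tH(X_i(R),t) = t^{c-1}$.

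Then I would assemble: $\beta(R,t) = t^n \prod_{i=1}^{n-1} t^{c-1} = t^{n + (n-1)(c-1)} = t^{c(n-1)+1}$. This single monomial has coefficient $1$, so there is exactly one nonzero Betti number of multidegree $R$, it lies in the $(c(n-1)+1)$-linear strand, and it equals $1$. For the example case $R_1 = \cdots = R_n$ an antichain $A$ of cardinality $c$, I would note $\max(R_i) = \min(R_{i+1}) = A$, the comparability relations between these two copies of $A$ are exactly the diagonal pairs $\{a,a\}$ (an antichain has no strict relations), which is a perfect matching of size $c$, so the hypothesis applies. (One should double check whether $\max(R_i)$ and $\min(R_{i+1})$ are allowed to overlap as subsets of $P$; if $b$ lies in both, then by Proposition \ref{iterated_suspension} it is comparable only to itself and is absorbed into a suspension coordinate, consistently with the matching count — this is the one bookkeeping point to verify carefully.)

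I expect the main obstacle to be precisely this comparison between the combinatorial hypothesis (perfect matching of comparabilities) and the topology of $Y_{i,\max}(R)$: one must be careful that the ``only comparability relations'' condition is between $\max(R_i)$ and $\min(R_{i+1})$ as subsets of $P$ (so elements may coincide), and that the resulting complex is genuinely $\partial\Delta^{c-1}$ rather than something with extra faces or missing faces. Once that identification is nailed down, Proposition \ref{pro:TopJoin}, Theorem \ref{The:LPR:BettiLnP}, and the suspension formula of Proposition \ref{TopProSusp} do the rest mechanically. A secondary, minor point is confirming that the exponent $c(n-1)+1$ indeed names the \emph{last} linear strand, which follows immediately from the regularity statement \cite[Cor. 3.3]{EHM} quoted just before the proposition.
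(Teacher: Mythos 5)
Your proposal is correct and follows essentially the same route as the paper: apply Theorem \ref{The:LPR:BettiLnP}, use the suspension identification $\tH^{j}(X_i(R))\cong\tH^{j-1}(Y_{i,\max}(R))$, and recognize each $Y_{i,\max}(R)$ as the boundary of the $(c-1)$-simplex under the matching hypothesis, giving $\beta(R,t)=t^{c(n-1)+1}$ with coefficient $1$; you simply run the argument forward from the hypothesis, whereas the paper phrases it by forcing each $Y_{i,\max}(R)$ to be $\partial\Delta^{c-1}$ from the strand condition and then concluding. (Only a cosmetic slip: a proper subset $A\subsetneq\max(R_i)$ need not miss a \emph{unique} element; picking any missing element and its matched partner suffices.)
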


\begin{proof}
The degree $R$ occurs in the $(c(n-1) +1)$-linear strand iff 
 $t^{c(n-1) +1}$ occurs in $\beta(R,t)$ with 
nonzero coefficient. Dividing this polynomial by $t^n$,
the power $c(n-1) +1-n = (c-1)(n-1)$ must occur with nonzero coefficient
in $\prod_{i = 1}^{n-1} \tH(X_i(R),t)$. But $\tH^j(X_i(R);\kr) 
\iso \tH^{j-1}(Y_{i,max}(R);\kr)$
vanishes for $j-1 \geq c-1$. Thus these cohomology groups must all 
be nonvanishing for $j = c-1$. Then each $Y_{i,max}(R)$ must be the boundary of 
the $(c-1)$-simplex and the result follows.
\end{proof}

\begin{proposition} Let $i = 1,\ldots, c-1$.
 For $j > in - (i-1)$ the $j$'th linear
strand is zero in homological degrees $\leq i-1$.
Furthermore the $(i+1)n - i$ linear strand starts in homological 
degree $i$. 
\end{proposition}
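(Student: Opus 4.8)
The plan is to read off everything from the formula $\beta(R,t) = t^n \prod_{i=1}^{n-1} \tH(X_i(R),t)$ of Theorem \ref{The:LPR:BettiLnP}, together with the vanishing range for the reduced cohomology of the complexes $X_i(R)$. First I would record the relevant bound: since $X_i(R)$ is homotopy equivalent to the suspension of $Y_{i,\max}(R)$, a complex on the antichain $\max(R_i)$ which has at most $c$ vertices, we have $\tH^{j}(X_i(R),\kr) \iso \tH^{j-1}(Y_{i,\max}(R),\kr)$, and the latter vanishes for $j-1 \geq c-1$, i.e. $\tH^j(X_i(R),\kr) = 0$ for $j \geq c-1$. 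Also $\tH^j(X_i(R),\kr) = 0$ for $j < 0$. Hence each factor $\tH(X_i(R),t)$ is a polynomial whose nonzero terms have exponents $p_i$ in the range $0 \leq p_i \leq c-2$. Moreover, for the product to be nonzero at all we need each factor nonzero, and a nonzero factor has a nonzero coefficient in some degree $p_i \geq 0$.

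Now consider the coefficient of $t^j$ in $\beta(R,t)$, which is the $j$-linear strand Betti number in multidegree $R$. Dividing by $t^n$, this is the coefficient of $t^{j-n}$ in $\prod_{i=1}^{n-1}\tH(X_i(R),t)$. A term of degree $j-n$ in this product is a sum of products $\prod_i (\text{coeff of } t^{p_i})$ with $\sum_{i=1}^{n-1} p_i = j-n$ and each $0 \leq p_i \leq c-2$. For this to be nonzero we need $j - n \leq (n-1)(c-2)$, i.e. $j \leq (n-1)(c-2) + n = (c-1)(n-1) + 1 = c(n-1) - (n-1) + 1$; but more to the point, I want to tie the size of $j$ to the number of factors that must carry a \emph{positive} exponent. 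If the $j$-linear strand is nonzero in homological degree $h$, then some multidegree $R$ with $|R| = j + h$ has $\beta_{h,R} \neq 0$, so the product is nonzero in degree $j-n$. Among the $n-1$ factors, let $k$ of them contribute a positive $p_i$ and the remaining $n-1-k$ contribute $p_i = 0$ (equivalently $\tH^0(X_i(R),\kr) \neq 0$, i.e. the bipartite graph between $\max(R_i)$ and $\min(R_{i+1})$ is complete). Then $j - n = \sum p_i \leq k(c-2)$, so $j \leq k(c-2) + n$. I would then relate $h$ to $k$: a factor contributing $p_i = 0$ means $Y_{i,\max}(R) = \{\emptyset\}$, forcing $\max(R_i)$ and $\min(R_{i+1})$ to be comparable in the complete-bipartite way; intuitively each such "rigid" step contributes nothing to the homological degree beyond the linear minimum, while each of the $k$ nontrivial steps can raise the homological degree. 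Making this precise: the homological degree $h = |R| - (\text{number of } t^n\text{-shift}) - \dots$; from $\beta_{h,R}$ in the $j$-linear strand we have $h = |R| - j$ and $|R| = \sum |R_i|$. The key inequality to extract is $h \leq$ (something like) $\sum_i (p_i + 1) - $ corrections, bounding $h$ below $k$ unless $j$ is large.

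Concretely, for the first claim I would argue contrapositively: suppose the $j$-linear strand is nonzero in homological degree $h \leq i-1$ for some $i \in \{1,\dots,c-1\}$; I want to show $j \leq in - (i-1)$. The cohomological class sits in $\bigotimes_{\ell} \tH^{p_\ell}(X_\ell(R),\kr)$ with $\sum(p_\ell + 1) = $ (degree in the suspension-shifted join) and $h$ controlled by the number of indices $\ell$ with $p_\ell > 0$, call it $k$; one shows $h \geq k - 1$ roughly (each nontrivial suspension factor of cohomological degree $\geq 1$ forces homological degree up), hence $k \leq h + 1 \leq i$. Then $j - n = \sum p_\ell \leq k(c-2) \leq i(c-2)$, giving $j \leq i(c-2) + n = in - i(n - c + 2) + \dots$ — I will need to be careful with the arithmetic here, but the target $j \leq in - (i-1)$ should come out after correctly accounting for the interplay between $|R|$, $h$, and $\sum p_\ell$, using $|R_i| \geq |\max(R_i)|$ and $|R_i| \geq |\min(R_i)|$ and the fact that a rigid step contributes exactly a predictable amount to $|R|$. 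For the second claim, that the $((i+1)n - i)$-linear strand \emph{starts} in homological degree $i$: the first part (with $i+1$ in place of $i$, noting $(i+1)n - i = (i+1)n - ((i+1)-1)$) gives that this strand is zero in homological degrees $\leq i-1$; so it remains to exhibit one multidegree $R$ with a nonzero Betti number in homological degree exactly $i$ in this strand. For that I would take $i$ of the steps to be "boundary-of-$(c-1)$-simplex" type (each $Y_{i,\max}(R)$ the boundary sphere $S^{c-2}$, contributing a $t^{c-1}$ term) — wait, that overshoots; instead take $i$ of the steps to contribute the minimal positive amount and the rest rigid, e.g. each nontrivial $X_\ell(R)$ homotopy equivalent to a single $1$-sphere (so $\tH^1 = \kr$, contributing $t^1$), which one realizes by choosing $\max(R_\ell), \min(R_{\ell+1})$ a two-element antichain matched by a single covering relation; with $i$ such steps and $n-1-i$ complete-bipartite steps one computes $\sum(p_\ell+1)$ and the resulting $(j,h)$, and checks $j = (i+1)n - i$ and $h = i$.

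The main obstacle I expect is pinning down the exact relation between the homological degree $h$, the cardinality $|R| = \sum|R_i|$, and the cohomological degrees $p_\ell$ of the join factors — essentially tracking how a "rigid" (complete-bipartite) step $\ell$ forces $|R_\ell|$ and $|R_{\ell+1}|$ to be antichains of equal comparable size and hence contributes a fixed, minimal amount to $|R| - j$, so that only the nontrivial steps can push the homological degree up. Once that bookkeeping is in place, both statements follow by the counting above; but getting the off-by-one constants ($in - (i-1)$ exactly, and homological degree exactly $i$) right will require care with the boundary cases $c=1$, antichain steps, and the minimal realizing examples.
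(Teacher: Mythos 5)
Your overall framework (read everything off from Theorem \ref{The:LPR:BettiLnP}) is the right one, but both halves of your argument have genuine gaps at the decisive step. For the vanishing statement, your plan hinges on two inequalities: $\sum_\ell p_\ell \leq k(c-2)$, where $k$ is the number of join factors with $p_\ell>0$, and ``$h \geq k-1$''. The second inequality is false: take $R_1=\cdots=R_n$ equal to a fixed $2$-element antichain; then every factor $X_\ell(R)$ is a $4$-cycle, so $k=n-1$ and $p_\ell=1$ for all $\ell$, while $|R|=2n$, $j=n+(n-1)=2n-1$ and $h=|R|-j=1$, so $h=1$ while $k-1=n-2$ is arbitrarily large. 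And even granting $k\leq i$, the bound you would get, $j\leq n+i(c-2)$, is strictly weaker than the required $j\leq in-(i-1)$ as soon as $c$ is large (already for $i=1$, where the target is $j\leq n$ but your bound is $j\leq n+c-2$): the maximal antichain size $c$ is simply not the relevant quantity. What is needed instead is the multidegree-sensitive bound $\tH^{p}(X_\ell(R),\kr)=0$ for $p>\min\{|R_\ell|,|R_{\ell+1}|\}-1$; choosing $u$ with $|R_u|$ maximal, summing these bounds gives $j-n\leq |R|-|R_u|-(n-1)$, i.e.\ $h\geq |R_u|-1\geq |R|/n-1$, and combining this with $|R|=j+h\leq j+i-1$ yields exactly $j\leq i(n-1)+1=in-(i-1)$. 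Your ``bookkeeping to be pinned down'' is precisely this step, and your $c$-based and $k$-based substitutes do not supply it.

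For the second statement, your proposed witness is also off: taking $i$ nontrivial steps each contributing $t^{1}$ and $n-1-i$ complete-bipartite steps gives $j=n+i$, which is the $(n+i)$-linear strand, not the $((i+1)n-i)$-strand, once $n\geq 3$ (they agree only for $n=2$); moreover, if ``matched by a single covering relation'' means a single edge between two $2$-element antichains, that factor is contractible and the Betti number vanishes altogether. The correct witness is the one your first instinct rejected: take $R_1=\cdots=R_n=A$ an antichain of cardinality $i+1$ (possible since $i\leq c-1$), so that every one of the $n-1$ factors is a sphere contributing $t^{i}$; then $j=n+(n-1)i=(i+1)n-i$, $|R|=n(i+1)$, and $h=|R|-j=i$, which together with the (corrected) vanishing statement shows the strand starts exactly in homological degree $i$.
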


\begin{proof} 
Let $A$ be an antichain with $i+1$ elements. Let 
$R_1 = R_2 = \cdots = R_n = A$. Then each $Y_{i,max}(R)$ is the boundary
of the simplex on $A$, and so $\tH(X_i(R),t) = t^{i}$. Hence
$\beta(R,t)$ is $t^{(n-1)i + n}$. 
The coefficient of this power of $t$ is $\beta_{|R|-(n-1)i - n,R}$. 
Here $|R| = (i+1)n$. So this is homological degree 
$|R|-(n-1)i - n  = i$ and the
$(n-1)i + n = (i+1)n-i$'th linear strand.

Now let $j > in- (i-1)$. We want to show that the coefficient
$\beta_{|R|-j,R}$ 
of $t^j$ in $\beta(R,t)$ is zero when $|R|-j \leq i-1$.
The $Y_{k,max}(R)$ are simplicial complexes on a subset of $R_k$
and similiarly the $Y_{k+1,min}(R)$ are simplicial complexes 
on subsets of $R_{k+1}$.
Therefore $\tH^p(X_k(R), \kr)$ is nonzero only if 
$p \leq \min \{|R_k|,|R_{k+1}|\} - 1$. 
Let $u$ be such that $|R_u|$ has maximal cardinality among the $R_k$.
Then $\prod_{i=1}^{n-1}\tH(X_i(R),t)$ is nonzero only for 
\[ p \leq
\sum_{k \neq u} |R_k| - n + 1 = |R|-|R_u| - n + 1.\]
Hence if the coefficient
of $t^j$ is nonzero in $\beta(R,t)$, then
\[ j \leq |R|- |R_u| + 1 \leq \frac{n-1}{n} |R| + 1. \]
Thus
\[ nj \leq (n-1) |R| + n \leq (n-1)(j+i-1) + n. \]
This gives $j \leq i(n-1) + 1 = in-(i-1)$, contradicting the assumption that
$j > in- (i-1)$. 
\end{proof}

\begin{lemma} \label{lem:MultiRs}
Let $R_1 \leq R_2 \leq \ldots \leq R_n$ be subsets of $P$.
Then 
\[ |R_1| + |R_2| + \cdots + |R_n| = |R_1 \cup \cdots \cup R_n| + 
\sum_{i = 1}^{n-1} |R_i \cap R_{i+1}|. \]
\end{lemma}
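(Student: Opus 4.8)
The plan is to reduce the identity to an element-by-element count. For each $p \in \bigcup_{i=1}^n R_i$ set $S_p = \{\, i \in [n] : p \in R_i \,\}$. Then $\sum_{i=1}^n |R_i| = \sum_p |S_p|$, while $|\bigcup_i R_i| = \sum_p 1$ and $\sum_{i=1}^{n-1} |R_i \cap R_{i+1}| = \sum_p \#\{\, i \in [n-1] : i \in S_p \text{ and } i+1 \in S_p \,\}$. So the whole lemma will follow once I show that each $S_p$ is an \emph{interval} $\{a, a+1, \ldots, b\}$ of consecutive integers: such an interval contributes $|S_p| = b-a+1$ on the left and $1 + (b-a)$ on the right, and these agree, so summing over $p$ gives the claim. (If $S_p$ were not contiguous its number of consecutive pairs would be strictly less than $|S_p| - 1$, so contiguity is precisely the property the equality demands.)

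The core step is therefore: \textbf{if $i < j$ and $p \in R_i \cap R_j$, then $p \in R_k$ for all $i \leq k \leq j$}; by induction on $j-i$ it suffices to prove $p \in R_{i+1}$. First I would record an auxiliary observation: if $A \leq B$ and $a \in A$, then some minimal element of $B$ is $\geq a$ --- push $a$ up to a maximal element $m$ of the subposet $A$ (possible since $P$ is finite), then apply the first clause of the definition of $A \leq B$ to $m$. Then, starting from $p \in R_i$, I would push $p$ up to a maximal element of $R_i$ and across to a minimal element $b$ of $R_{i+1}$, so $p \leq b \in R_{i+1}$; applying the auxiliary observation repeatedly along $R_{i+1} \leq R_{i+2} \leq \cdots \leq R_j$ yields a minimal element $c$ of $R_j$ with $b \leq c$, hence $p \leq c$. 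Now $p \in R_j$ and $c$ is minimal in $R_j$, so $p = c$; but then $p \leq b \leq c = p$ forces $b = p$, i.e. $p \in R_{i+1}$, completing the induction and showing $S_p$ is an interval.

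I expect the only genuinely delicate point to be this final ``collapse'': one must be careful that the element $c$ reached at the end of the iteration really lies in $\min(R_j)$, so that $p \leq c$ together with $p \in R_j$ forces $p = c$, and that one threads $p$ itself \emph{into} $R_{i+1}$ rather than merely exhibiting an element of $R_{i+1}$ lying above it. The base case $j = i+1$ is vacuous, and the passage from ``each $S_p$ is an interval'' back to the displayed identity is routine bookkeeping (an interval of size $m$ has exactly $m-1$ consecutive pairs).
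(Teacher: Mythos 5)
Your argument is correct: the auxiliary observation (if $A \leq B$, every element of $A$ lies below some element of $\min(B)$), the chain $p \leq b \leq \cdots \leq c$ with $c \in \min(R_j)$, the collapse $p = c$ forced by minimality of $c$ together with $p \in R_j$, and hence $b = p \in R_{i+1}$, all go through, and the passage from contiguity of each $S_p$ to the displayed identity is sound bookkeeping. Your route differs from the paper's in organization rather than in substance. The paper argues by induction on $n$: the case of a pair is plain inclusion--exclusion, and the inductive step rests on the identity $R_1 \cap (R_2 \cup \cdots \cup R_n) = R_1 \cap R_2$, which the paper merely ``notes'' without justification. That identity is exactly the left-endpoint instance of your contiguity claim (an element of $R_1$ that recurs in some later $R_j$ must already lie in $R_2$), and your chain-of-minimal-elements argument is precisely what a complete proof of it requires. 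So your element-by-element double count replaces the paper's induction plus inclusion--exclusion, at the cost of a slightly longer setup but with the benefit of actually establishing the one nontrivial poset-theoretic fact that the paper leaves implicit; the paper's version is the more compact write-up, yours the more self-contained one.
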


\begin{proof} For a pair $R_1 \leq R_2$ this obviously holds.
Now note that $R_1 \leq R_2 \cup \cdots \cup R_n$ and 
$R_1 \cap (R_2 \cup \cdots \cup R_n)$ equals $R_1 \cap R_2$.
Therefore:
\[ |R_1| + |R_2 \cup \cdots \cup R_n| = |R_1 \cup \cdots \cup R_n|
+ |R_1 \cap R_2|. \]
We may now argue by induction on the number of terms in the union.
\end{proof}

We now characterize the Betti numbers of $L(n,P)$ in the highest homological 
degree. Let $p = |P|$ be the cardinality of $P$. Recall that
$L(n,P)$ is a Cohen-Macaulay ideal of codimension $p$ and hence
has projective dimension $p-1$. 

\begin{proposition} \label{pro:MultiHighhom}
The Betti number $\beta_{p-1,R}$ is nonzero iff 
\begin{itemize}
\item $R_1 \leq R_2 \leq \cdots \leq R_n$,
\item $P = R_1 \cup R_2 \cup \cdots \cup R_n$,
\item For each $i = 1, \ldots, n-1$, we have that $\max(R_i) = \min(R_{i+1})$
is a maximal antichain in $P$.
\end{itemize}
In this case the Betti number $\beta_{p-1,R} = 1$. 
\end{proposition}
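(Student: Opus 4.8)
The plan is to read $\beta_{p-1,R}$ off from Theorem~\ref{The:LPR:BettiLnP}. Rewriting that formula as $\beta(R,t)=t\prod_{i=1}^{n-1}t\tH(X_i(R),t)$ (using $t^n=t\cdot t^{n-1}$), the number $\beta_{p-1,R}$ is precisely the coefficient of $t^{|R|-p}$ in $\prod_{i=1}^{n-1}t\tH(X_i(R),t)$. First I would establish an auxiliary fact about the order on subsets: if $A\leq B$ are nonempty subsets of $P$, then $A\cap B=\max(A)\cap\min(B)$. For the nontrivial inclusion, take $x\in A\cap B$; if $x\notin\max(A)$, pick $y\in\max(A)$ with $y>x$ and then $b\in\min(B)$ with $y\leq b$ (possible since $A\leq B$); then $x<b$ with $x\in B$ contradicts minimality of $b$, so $x\in\max(A)$, and symmetrically $x\in\min(B)$. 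Combined with Lemma~\ref{lem:MultiRs}, this gives, whenever $R_1\leq\cdots\leq R_n$, the identity $|R|=|R_1\cup\cdots\cup R_n|+\sum_{i=1}^{n-1}m_i$ with $m_i:=|\max(R_i)\cap\min(R_{i+1})|$.

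For the ``if'' direction, assume the three conditions and set $A_i:=\max(R_i)=\min(R_{i+1})$. Since $A_i$ is an antichain and any element of $R_i\cap R_{i+1}$ is squeezed between $\min(R_{i+1})=A_i$ and $\max(R_i)=A_i$, we get $R_i\cap R_{i+1}=A_i$. The bipartite graph defining $X_i(R)$ is then a perfect matching between two copies of $A_i$, so (via Proposition~\ref{iterated_suspension}, or directly) $X_i(R)$ is the sphere $\Sigma^{|A_i|}\{\emptyset\}$ and $t\tH(X_i(R),t)=t^{|A_i|}$. Hence $\beta(R,t)=t^{1+\sum_i|A_i|}$, and since $P=\bigcup_jR_j$ gives $|R|=p+\sum_i|R_i\cap R_{i+1}|=p+\sum_i|A_i|$, this monomial equals $t^{|R|-p+1}$. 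Therefore $\beta_{p-1,R}=1$ and all other $\beta_{j,R}$ vanish.

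For the ``only if'' direction, suppose $\beta_{p-1,R}\neq0$. Then $R_1\leq\cdots\leq R_n$ by the corollary following Theorem~\ref{The:LPR:BettiLnP}. Because each factor $t\tH(X_i(R),t)$ has nonnegative coefficients, there exist integers $d_i$ with $\tH^{d_i}(X_i(R),\kr)\neq0$ and $\sum_{i=1}^{n-1}(d_i+1)=|R|-p$. Applying Proposition~\ref{iterated_suspension} to each $X_i(R)$ gives $\tH^{d_i}(X_i(R),\kr)\cong\tH^{d_i-m_i}(C_i,\kr)$ for the indicated induced subcomplex $C_i$, which forces $d_i-m_i\geq-1$, i.e.\ $d_i+1\geq m_i$. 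Summing and using the identity from the first paragraph, $|R|-p=\sum_i(d_i+1)\geq\sum_im_i=|R|-|\bigcup_jR_j|$, so $|\bigcup_jR_j|\geq p$ and hence $P=\bigcup_jR_j$. Equality then holds everywhere, so $d_i+1=m_i$ for each $i$, whence $\tH^{-1}(C_i,\kr)\neq0$. But $X_i(R)$ has an edge ideal as its Stanley--Reisner ideal, so every vertex is a face, hence so is every vertex of the induced subcomplex $C_i$; thus $\tH^{-1}(C_i,\kr)\neq0$ is possible only when $C_i$ has no vertices, i.e.\ $\max(R_i)=\min(R_{i+1})=:A_i$. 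This $A_i$ is an antichain (the set of maximal elements of the subposet $R_i$). To see it is maximal, take any $x\in P=\bigcup_jR_j$, say $x\in R_j$: if $j\leq i$, then in $R_j$ we have $x\leq a$ for some $a\in\max(R_j)=\min(R_{j+1})$, and climbing through $R_{j+1},\dots,R_i$ the same way reaches an element of $A_i$ above $x$; if $j>i$, a symmetric descent produces an element of $A_i$ below $x$. So every element of $P$ is comparable with some element of $A_i$, i.e.\ $A_i$ is a maximal antichain. The value $\beta_{p-1,R}=1$ then follows from the computation in the previous paragraph.

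The main obstacle is the ``only if'' direction: extracting all three structural conditions from the single numerical equality $\sum_i(d_i+1)=|R|-p$. What makes it collapse is the interplay of Lemma~\ref{lem:MultiRs}, the auxiliary identity $R_i\cap R_{i+1}=\max(R_i)\cap\min(R_{i+1})$, and the lower bound $d_i+1\geq m_i$ from the iterated-suspension decomposition --- together these turn the inequality into a chain of equalities that pin down, in turn, $P=\bigcup_jR_j$, then $\max(R_i)=\min(R_{i+1})$, and finally (via the ``walking up and down'' argument) the maximality of the antichain. A smaller point needing care is that $\tH^{-1}$ of an induced subcomplex of $X_i(R)$ vanishes as soon as that subcomplex has a vertex, which is exactly what converts $d_i+1=m_i$ into $\max(R_i)=\min(R_{i+1})$.
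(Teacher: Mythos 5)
Your proof is correct and follows essentially the same route as the paper's: the product formula of Theorem \ref{The:LPR:BettiLnP}, the degree lower bound coming from the iterated suspension (Proposition \ref{iterated_suspension}, which the paper invokes via Corollary \ref{lem:MultiIntersect}), and Lemma \ref{lem:MultiRs} to turn the numerical inequality into a chain of equalities forcing $P=\bigcup_j R_j$ and $\max(R_i)=\min(R_{i+1})$. The only differences are that you spell out two steps the paper leaves implicit, namely the identity $R_i\cap R_{i+1}=\max(R_i)\cap\min(R_{i+1})$ when $R_i\leq R_{i+1}$ and the climbing/descending argument for maximality of the antichains, which is elaboration rather than a different method.
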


\begin{proof}
Suppose that $\beta_{p-1,R}$ is nonzero. Then 
by Theorem \ref{The:LPR:BettiLnP} the product
$\prod_{i=1}^{n-1}\tH(X_i(R),t)$ has 
a nonzero term $t^{|R|-p+1-n}$. 
Now each $\beta(Y_{i,max}(R),t)$ lives in degrees
$\geq |\max(R_i) \cap \min(R_{i+1})| - 2$ by Corollary \ref{lem:MultiIntersect}, 
and so 
\[ \beta(X_i(R),t) = t \beta(Y_{i,max}(R),t) \] 
lives in degrees 
$\geq |\max(R_i) \cap \min(R_{i+1})| - 1$.
Thus $\prod_{i=1}^{n-1}\tH(X_i(R),t)$ lives in degrees 
\[ \geq \sum_{i = 1}^{n-1} |\max(R_i) \cap \min(R_{i+1})| -  (n-1), \]
and if it actually has a nonzero term in this degree, then by
Corollary \ref{lem:MultiIntersect}, $\max(R_i) = \min(R_{i+1})$ for every $i$.

By the start of the proof we must then have
\[ |R|-p+1-n \geq \sum_{i = 1}^{n-1} |\max(R_i) \cap \min(R_{i+1})| -  (n-1). \]
This gives 
\[ |R| \geq p + \sum_{i = 1}^{n-1} |\max(R_i) \cap \min(R_{i+1})|. \]
By Lemma \ref{lem:MultiRs} this is only possible if $P$ 
is $R_1 \cup \cdots \cup R_n$ and we have equality above. But then
by the remark a few lines up, also each $ \max(R_i) = \min(R_{i+1})$
for each $i$.
But then each of these
sets must be a maximal antichain (since the union of the $R_i$'s is
$P$). Thus all three conditions in 
the statement must be fulfilled.

Conversely if these statements are fulfilled, we easily see
that each $Y_{i,max}(R)$ is the boundary of the simplex on $\max(R_i)$.
From this we deduce $\beta_{p-1,R} = 1$.
\end{proof}

In \cite[Sec.~7]{Katt} they pose the problem of when 
$L(n,P)$ is a level Cohen-Macaulay ideal. (This means that
the graded Betti numbers in maximal homological degree $|P|-1$ exist
only in one degree.) Here we answer this problem.

\begin{corollary}
The ideal $L(n,P)$ is a level Cohen-Macaulay ideal
iff all maximal antichains in $P$ have the same cardinality. If
this cardinality is $c$ and $p = |P|$, then the nonzero graded Betti number
in maximal homological degree is $\beta_{p-1,p + (n-1)c}$.
\end{corollary}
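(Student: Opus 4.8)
The plan is to derive this corollary directly from Proposition~\ref{pro:MultiHighhom}, which already pins down exactly which multidegrees $R$ carry a nonzero Betti number $\beta_{p-1,R}$ and tells us that in each such case $\beta_{p-1,R}=1$. Recall from that proposition that $\beta_{p-1,R}\neq 0$ forces, for every $i=1,\dots,n-1$, the equality $\max(R_i)=\min(R_{i+1})$ to be a maximal antichain in $P$; write $A_i$ for this common set. So the first step is to compute $|R|$ for such an $R$ in terms of the cardinalities of the $A_i$'s: since $P=R_1\cup\cdots\cup R_n$, Lemma~\ref{lem:MultiRs} gives $|R|=|R_1|+\cdots+|R_n|=p+\sum_{i=1}^{n-1}|R_i\cap R_{i+1}|$, and because $\max(R_i)=\min(R_{i+1})=A_i$ one checks $R_i\cap R_{i+1}=A_i$ (an element of $R_i\cap R_{i+1}$ must be maximal in $R_i$ and minimal in $R_{i+1}$, hence lies in $A_i$, and conversely $A_i\subseteq R_i\cap R_{i+1}$ by definition). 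Hence $|R|=p+\sum_{i=1}^{n-1}|A_i|$ for every multidegree $R$ realizing a top Betti number.

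Next I would observe that \emph{level} means precisely that all multidegrees $R$ with $\beta_{p-1,R}\neq 0$ have the same total degree $|R|$. By the formula just obtained this is equivalent to asking that $\sum_{i=1}^{n-1}|A_i|$ be the same integer for every admissible choice of maximal antichains $A_1,\dots,A_{n-1}$ arising from such an $R$. The key point then is twofold: (a) every maximal antichain $A$ of $P$ actually occurs as some $A_i$ in an admissible $R$ — indeed, taking $R_1=\cdots=R_n=A$ gives a valid multidegree with $\max(R_i)=\min(R_{i+1})=A$ maximal for all $i$ (this is the ``Example case'' already noted in the statement of Proposition~\ref{pro:MultiHighhom}), and more generally one can mix different maximal antichains across the layers provided consecutive constraints are met; and (b) conversely, given \emph{any} sequence $A_1,\dots,A_{n-1}$ of maximal antichains one can build an admissible $R$ — take $R_i$ to be the down-set generated by $A_{i-1}$ intersected with the up-set generated by... more simply, interleave so that $R_i$ has $\min(R_i)=A_{i-1}$ and $\max(R_i)=A_i$, which is possible since maximal antichains, and setting $R_1$ with $\max(R_1)=A_1$, $R_n$ with $\min(R_n)=A_{n-1}$, while ensuring $R_1\cup\cdots\cup R_n=P$. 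The upshot I expect is: level $\iff$ all maximal antichains of $P$ have the same cardinality $c$, since if two maximal antichains $A,A'$ have $|A|\neq|A'|$ one produces two admissible $R$'s (e.g.\ all layers equal to $A$, versus all layers equal to $A'$) with different $|R|$, destroying levelness; and if all maximal antichains have cardinality $c$ then $\sum_{i=1}^{n-1}|A_i|=(n-1)c$ always, so $|R|=p+(n-1)c$ uniformly.

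The main obstacle I anticipate is step (b): verifying carefully that for every prescribed sequence of maximal antichains (in particular at least for constant sequences, which already suffices for the forward implication) there genuinely is a multidegree $R$ satisfying all three bullet conditions of Proposition~\ref{pro:MultiHighhom}. For the direction ``all maximal antichains equal cardinality $\Rightarrow$ level'' this is not even needed in full generality: one only needs that \emph{every} admissible $R$ has $|R|=p+(n-1)c$, which follows immediately from the cardinality computation above together with the fact that each $A_i$ is a maximal antichain, hence $|A_i|=c$. For the direction ``level $\Rightarrow$ all maximal antichains equal cardinality'' one needs the existence of at least one admissible $R$ with $A_1=\cdots=A_{n-1}=A$ for each maximal antichain $A$, and this is exactly the Example case already recorded, so no new work is required. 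Therefore the proof reduces cleanly to: (1) the cardinality identity $|R|=p+\sum|A_i|$ via Lemma~\ref{lem:MultiRs}; (2) each $A_i$ being a maximal antichain; (3) the already-noted realizability of constant sequences; and (4) assembling these into the stated equivalence and reading off $\beta_{p-1,\,p+(n-1)c}$ as the unique nonzero top Betti number.

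\begin{proof}
By Proposition~\ref{pro:MultiHighhom}, $\beta_{p-1,R}\neq 0$ iff $R_1\leq\cdots\leq R_n$, $P=R_1\cup\cdots\cup R_n$, and each $A_i:=\max(R_i)=\min(R_{i+1})$ is a maximal antichain in $P$, and in that case $\beta_{p-1,R}=1$. For such $R$, an element of $R_i\cap R_{i+1}$ is maximal in $R_i$ and minimal in $R_{i+1}$, hence lies in $A_i$; conversely $A_i\subseteq R_i\cap R_{i+1}$. So $R_i\cap R_{i+1}=A_i$, and by Lemma~\ref{lem:MultiRs},
\[ |R| = |R_1\cup\cdots\cup R_n| + \sum_{i=1}^{n-1}|R_i\cap R_{i+1}| = p + \sum_{i=1}^{n-1}|A_i|. \]
The ideal $L(n,P)$ is level iff all degrees $R$ with $\beta_{p-1,R}\neq 0$ have the same $|R|$.

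Suppose every maximal antichain of $P$ has the same cardinality $c$. Then for any $R$ with $\beta_{p-1,R}\neq 0$ each $|A_i|=c$, so $|R|=p+(n-1)c$; thus $L(n,P)$ is level and the unique nonzero top Betti number is $\beta_{p-1,\,p+(n-1)c}=1$.

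Conversely, suppose two maximal antichains $A,A'$ of $P$ have $|A|\neq|A'|$. Taking $R_1=\cdots=R_n=A$ gives a multidegree with $\max(R_i)=\min(R_{i+1})=A$ maximal and $R_1\cup\cdots\cup R_n=A=P$ when $P=A$; in general, to ensure $P=R_1\cup\cdots\cup R_n$ one enlarges, say $R_1$, to all of $P$ while keeping $\max(R_1)=A$ — possible because $A$ being a maximal antichain means every element of $P$ is comparable to some element of $A$, so $P$ itself has maximal elements forming a subset of $A$... more directly, since $A$ is a maximal antichain, setting $R_1=\{x\in P: x\leq a\text{ for some }a\in A\}\cup\cdots$ and arguing as in the Example case of Proposition~\ref{pro:MultiHighhom} produces an admissible $R$ with all $A_i=A$, hence $|R|=p+(n-1)|A|$. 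The same construction with $A'$ gives an admissible $R'$ with $|R'|=p+(n-1)|A'|\neq|R|$. So $L(n,P)$ is not level. This proves the equivalence.
\end{proof}
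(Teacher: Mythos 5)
Your first half is sound and follows the paper's route: you reduce everything to Proposition \ref{pro:MultiHighhom}, and your computation $|R| = p + \sum_{i}|A_i|$ via Lemma \ref{lem:MultiRs} is a correct (and usefully explicit) version of what the paper leaves implicit. One small point: your justification that $R_i \cap R_{i+1} \subseteq A_i$ asserts the conclusion ("such an element is maximal in $R_i$ and minimal in $R_{i+1}$") rather than proving it; the quick argument is that $x \in R_i \cap R_{i+1}$ satisfies $b \leq x \leq a$ for some $b \in \min(R_{i+1}) = A_i$ and $a \in \max(R_i) = A_i$, and since $A_i$ is an antichain this forces $x = a = b \in A_i$. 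Also, to speak of \emph{the} nonzero Betti number $\beta_{p-1,p+(n-1)c}$ you need some nonzero top Betti number to exist; this is automatic since $L(n,P)$ is Cohen--Macaulay of codimension $p$, hence has projective dimension exactly $p-1$ (or it follows from the construction below).

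The genuine gap is in the converse. The multidegree $R_1 = \cdots = R_n = A$ does \emph{not} satisfy the hypotheses of Proposition \ref{pro:MultiHighhom} unless $A = P$: the covering condition $P = R_1 \cup \cdots \cup R_n$ fails. (The ``Example case'' you invoke is attached to the earlier proposition on the last linear strand, not to Proposition \ref{pro:MultiHighhom}, precisely because that proposition has no covering requirement.) Your attempted repairs do not work as written: enlarging $R_1$ to all of $P$ destroys $\max(R_1) = A$, since the maximal elements of $P$ need not lie in $A$; and your ``more directly'' construction is left unfinished (an ellipsis in place of the actual definition), while keeping $R_n = A$ would still miss every element strictly above $A$. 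The missing construction — which is exactly what the paper supplies — is: take $R_1$ to be the order ideal (down-set) generated by the maximal antichain $A$, take $R_2 = \cdots = R_{n-1} = A$, and take $R_n$ to be the filter (up-set) generated by $A$. Then $\max(R_i) = \min(R_{i+1}) = A$ for all $i$, $R_1 \leq \cdots \leq R_n$, and $R_1 \cup R_n = P$ precisely because $A$ is a maximal antichain, so Proposition \ref{pro:MultiHighhom} gives $\beta_{p-1,R} = 1$ with $|R| = p + (n-1)|A|$. Applying this to two maximal antichains of different cardinalities destroys levelness, completing the equivalence. With this construction inserted, your proof coincides with the paper's.
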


\begin{proof}
If all maximal antichains have cardinality $c$, then 
by the above proposition any nonzero $\beta_{p-1,R}$ will have
$R$ of cardinality $p + c(n-1)$. On the other hand, if there is an antichain
$D$ of cardinality $d < c$, then let $R_1$ be the poset ideal generated by 
$D$, let $R_2 = R_3 = \cdots = R_{n-1} = D$, and let $R_n$ be the poset filter
generated by $D$. Then $R$ has cardinality $p + d(n-1)$ and $\beta_{p-1,R} = 1$
and so $L(n,P)$ is not level.
\end{proof}

\begin{remark}
In \cite[Cor.~4.7]{Katt} they show that the first linear strand has length $p-1$,
equal to the projective dimension,  if $P$ has a unique maximal or minimal
element. In fact Proposition \ref{pro:MultiHighhom} 
shows that this linear strand has length $p-1$
iff there is some element in $P$ which is comparable to any other element
in $P$. 
\end{remark}

\section{Letterplace resolutions when $P$ has tree structure}
\label{sec:Trees}
In the case the Hasse diagram of $P$ has the form of a rooted
tree, we shall see that there is a rather simple inductive procedure
which enables us to construct the graded Betti table of $L(n,P)$. In the
last example we compute the resolution of the $n$'th letterplace ideal
of the $V$ shaped poset with three nodes. This is the initial ideal
of the Pfaffians of a $(2n+1) \times (2n+1)$ generic skew-symmetric matrix.

In \cite{FlNe} the second and third author study deformations of 
letterplace ideals $L(2,P)$ when the Hasse diagram of $P$ is
a rooted tree. For such posets $P$ and $Q$ 
it is shown in \cite{HeQuAk} 
that $L(P,Q)$ and $L(Q,P)$ are Alexander dual, which is one of the
few cases this holds.

\subsection{The inductive procedure}
Let $a$ be an element of $P$.
The letterplace ideal $L(n,\Pa)$ lives in the polynomial ring
$\kr[x_{[n] \times (\Pa)}]$. If we include this ring in 
$\kr [x_{[n] \times P}]$, this letterplace ideal generates an extended ideal 
$L(n, \Pa)^e$ in $\kr[x_{[n] \times P}]$.
Similarly we have an extended ideal $L([2, \ldots, n],P)^e$ in 
$\kr[x_{[n] \times P}]$.   
The extended ideal $L(n,\Pa)^e$ is naturally included in $L(n,P)$ which is
again naturally included in $L([2,\cdots,n],P)^e$.

When $a$ is a unique minimal  element of $P$, 
there is also a natural multiplication map
\begin{align*} 
& & L([2,\ldots,n],P)^e(-1) & \mto{\cdot x_{1,a}} L(n,P)& 
\end{align*}

For short we write $L(n-1,P)$ for $L([2,\ldots,n],P)$.
Let $\iota$ denote inclusion maps.
\begin{lemma} Suppose the poset $P$ has a unique minimal element $a$.
Then there is an exact sequence
\begin{equation*} 
0 \vpil L(n,P) \xleftarrow{\left[ \begin{matrix} 
\iota, \cdot x_{1,a} \end{matrix} \right ]}
\begin{matrix} L(n, \Pa)^e  \\ \oplus \\ L(n-1,P)^e(-1)
\end{matrix} 
\xleftarrow{\left [ \begin{matrix} \cdot x_{1,a}, \\-\iota 
\end{matrix} \right ]} \\
L(n,\Pa)^e (-1) \vpil 0.
\end{equation*}

Furthermore $\Tor_i^S(-,\kr)$ applied to the right map is zero
for each $i$.
\end{lemma}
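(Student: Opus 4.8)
The plan is to identify the three nonzero terms with the intersection, the two summands, and the sum of two explicit subideals of $L(n,P)$; exactness then becomes the standard short exact sequence attached to a sum and an intersection of ideals, and the vanishing of $\Tor$ on the right-hand map becomes a disjointness statement about multigraded supports.

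First I would set $A=L(n,\Pa)^e$ and $B=x_{1,a}\cdot L(n-1,P)^e$, both regarded as ideals of $S=\kr[x_{[n]\times P}]$. Since $a$ is the unique minimal element of the finite poset $P$, it is in fact a global minimum, so $a\le p$ for every $p\in P$. Splitting a generator $x_{1,p_1}\cdots x_{n,p_n}$ of $L(n,P)$ according to whether $p_1=a$ shows $L(n,P)=A+B$: if $p_1\neq a$ then $a<p_1\le p_i$ forces all $p_i\in\Pa$, so the generator lies in $A$; if $p_1=a$ it is $x_{1,a}$ times a generator of $L(n-1,P)=L([2,\dots,n],P)$, so it lies in $B$. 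Multiplication by the nonzerodivisor $x_{1,a}$ gives $B\cong L(n-1,P)^e(-1)$. Finally $A\cap B=x_{1,a}A\cong L(n,\Pa)^e(-1)$: the inclusion $x_{1,a}A\subseteq A\cap B$ uses that $A\subseteq L(n-1,P)^e$ (each generator of $A$ is a single variable $x_{1,p_1}$ times a generator of $L(n-1,P)$), and conversely, if $x_{1,a}m\in A$ for a monomial $m$ then a minimal generator of $A$ dividing $x_{1,a}m$ involves no variable $x_{j,a}$ and hence already divides $m$, so $m\in A$. With these identifications the displayed sequence becomes the standard exact sequence $0\to A\cap B\to A\oplus B\to A+B\to 0$ written with the sign conventions $[\iota,\,\cdot x_{1,a}]$ and $[\cdot x_{1,a},\,-\iota]$; in particular it is a complex and it is exact.

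For the second statement I would regard everything with its full $\NN^{[n]\times P}$-multigrading, reading every twist $(-1)$ as the multigraded twist by the variable $x_{1,a}$, so that the right-hand map $\phi=[\cdot x_{1,a},-\iota]$ is multigraded and hence so is $\Tor_i^S(\phi,\kr)$. It then suffices to check that, for each $i$, no multidegree supports both the source $\Tor_i^S(L(n,\Pa)^e(-1),\kr)$ and either summand of the target. Because $L(n,\Pa)^e$ is extended from $\kr[x_{[n]\times\Pa}]$ (equivalently, the $x_{j,a}$, $j\in[n]$, form a regular sequence modulo it), extension of scalars carries a minimal free resolution to a minimal free resolution, so $\Tor_i^S(L(n,\Pa)^e,\kr)$ is supported in multidegrees involving no $x_{j,a}$; after the twist by $x_{1,a}$ the source is supported in multidegrees that contain $(1,a)$. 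The first target summand $\Tor_i^S(L(n,\Pa)^e,\kr)$ is supported in multidegrees avoiding $(1,a)$ — disjoint from the source. The second target summand involves $L(n-1,P)^e$, which is extended from $\kr[x_{\{2,\dots,n\}\times P}]$, so $\Tor_i^S(L(n-1,P)^e,\kr)$ avoids all $x_{1,q}$ and after the twist meets $\{1\}\times P$ in exactly $\{(1,a)\}$; whereas by the corollary following Theorem~\ref{The:LPR:BettiLnP} (applied to the poset $\Pa$) a nonzero $\beta_{i,R}(L(n,\Pa))$ forces $R_1\neq\emptyset$, so the source is supported in multidegrees that contain $(1,a)$ together with some $(1,q)$ with $q\neq a$ — again disjoint. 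A multigraded map of multigraded vector spaces with disjoint supports is zero, so $\Tor_i^S(\phi,\kr)=0$ for all $i$.

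The one step needing care is the support claim for $\Tor_i^S(L(n,\Pa)^e,\kr)$: one should explicitly observe that base change along $\kr[x_{[n]\times\Pa}]\hookrightarrow S$ preserves minimality of free resolutions, so that no new multidegrees are introduced by passing from $L(n,\Pa)$ to $L(n,\Pa)^e$. The rest is a routine unwinding of the Mayer--Vietoris sequence of the ideals $A$ and $B$.
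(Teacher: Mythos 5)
Your proposal is correct and is essentially the paper's argument in different packaging: presenting the sequence as the standard exact sequence $0 \to A\cap B \to A \oplus B \to A+B \to 0$ with $A = L(n,P\setminus\{a\})^e$ and $B = x_{1,a}L(n-1,P)^e$ rests on exactly the computations the paper performs directly (splitting the generators of $L(n,P)$ according to whether $p_1=a$, and $A:x_{1,a}=A$ since no generator of $A$ involves a variable $x_{j,a}$), and your $\Tor$-vanishing is the same multigraded support-disjointness argument, with your flat base change remark making explicit what the paper leaves implicit. The only cosmetic difference is that the paper kills the component $\cdot\, x_{1,a}$ by noting that multiplication by a variable annihilates $\Tor_i^S(-,\kr)$, whereas you use a second disjoint-support argument; both are fine.
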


\begin{proof}
The above sequence is clearly a complex. Also every generator of $L(n,P)$ 
is in the image of the left map so this map is surjective. The right
map is clearly also injective. So let $u \oplus -v$ be in  the kernel
of the left map. Then $u = x_{1,a} v$. 
We claim that $v$ is in $L(n,\Pa)^e$. Then clearly the image of $v$
by the right map is $u \oplus -v$.

Let $m_{\Gamma \phi}$ divide $u$ where $\phi : [n] \pil \Pa$ is an isotone map. 
Then $m_{\Gamma \phi}$ divides $v$ and so $v$ is in $L(n,\Pa)^e$. 

\medskip
As for the second statement, it is clear that $\Tor_i^S(-,\kr)$ applied
to $L(n,\Pa)^e \vmto{\cdot x_{1,a}} L(n,\Pa)^e(-1)$ is zero.
Furthermore no $R$ with nonzero $\beta_{i,R}$ of the ideal $L([2,\ldots,n],P)^e$ involves
an element $(1,p) \in R$ for some $p \in  P$ since no generator of this
ideal does. 
But every Betti degree of $L(n,\Pa)^e$ must involve such an element, since
every generator does so.
Hence $\Tor_i^S(-,\kr)$ is zero when applied to the right map in
the statement.
\end{proof}

\begin{corollary} \label{ColRecBetti}
The Betti number $\beta_{i,j}(L(n,P))$ equals the sum
\[ \beta_{i,j}(L(n,\Pa)) + \beta_{i,j-1}(L(n-1,P)) + \beta_{i-1,j-1}(L(n,\Pa)).\]
\end{corollary}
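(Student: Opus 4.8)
The plan is to extract the corollary directly from the short exact sequence in the preceding lemma by taking the long exact sequence in $\Tor^S_i(-,\kr)$, and then use the vanishing statement at the end of that lemma to split the long exact sequence into short pieces. First I would write down the long exact sequence associated to
\[ 0 \vpil L(n,P) \vpil L(n,\Pa)^e \oplus L(n-1,P)^e(-1) \vpil L(n,\Pa)^e(-1) \vpil 0, \]
which reads
\[ \cdots \pil \Tor_i^S(L(n,\Pa)^e(-1),\kr) \pil \Tor_i^S(L(n,\Pa)^e \oplus L(n-1,P)^e(-1),\kr) \pil \Tor_i^S(L(n,P),\kr) \pil \Tor_{i-1}^S(L(n,\Pa)^e(-1),\kr) \pil \cdots. \]
Here one uses additivity of $\Tor$ over direct sums to rewrite the middle term as $\Tor_i^S(L(n,\Pa)^e,\kr) \oplus \Tor_i^S(L(n-1,P)^e(-1),\kr)$.

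Next I would invoke the second statement of the lemma: $\Tor_i^S(-,\kr)$ applied to the right-hand map $L(n,\Pa)^e(-1) \pil L(n,\Pa)^e \oplus L(n-1,P)^e(-1)$ is the zero map for every $i$. This means that in the long exact sequence every connecting-into-the-middle map is zero, so the sequence breaks up into short exact sequences
\[ 0 \pil \Tor_i^S(L(n,\Pa)^e,\kr) \oplus \Tor_i^S(L(n-1,P)^e(-1),\kr) \pil \Tor_i^S(L(n,P),\kr) \pil \Tor_{i-1}^S(L(n,\Pa)^e(-1),\kr) \pil 0. \]
Passing to graded (or multigraded) dimensions, this gives
\[ \beta_{i,j}(L(n,P)) = \beta_{i,j}(L(n,\Pa)^e) + \beta_{i,j}(L(n-1,P)^e(-1)) + \beta_{i-1,j}(L(n,\Pa)^e(-1)). \]
Finally I would observe that extension of scalars along the flat inclusion $\kr[x_{[n]\times(\Pa)}] \hookrightarrow \kr[x_{[n]\times P}]$ does not change Betti numbers, so $\beta_{i,j}(L(n,\Pa)^e) = \beta_{i,j}(L(n,\Pa))$ and likewise for $L(n-1,P)^e$; and a shift by $(-1)$ in internal degree sends $\beta_{i,j}$ to $\beta_{i,j-1}$. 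Substituting these identifications yields exactly
\[ \beta_{i,j}(L(n,P)) = \beta_{i,j}(L(n,\Pa)) + \beta_{i,j-1}(L(n-1,P)) + \beta_{i-1,j-1}(L(n,\Pa)), \]
which is the claim.

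The main obstacle, to the extent there is one, is the bookkeeping around the extended ideals and the degree shifts: one must be careful that $\Tor$ over the big ring $S = \kr[x_{[n]\times P}]$ of the extended ideal agrees with $\Tor$ over the small ring of the original ideal, i.e. that extending scalars along a polynomial (hence faithfully flat) ring map preserves the minimal free resolution up to tensoring, and that the grading conventions line up so the $(-1)$ shifts land on the $j-1$ index as written. Everything else is a formal consequence of the long exact sequence in $\Tor$ together with the vanishing of the relevant maps, which is precisely what the lemma was engineered to supply; so once the split short exact sequences are in hand, the corollary is immediate.
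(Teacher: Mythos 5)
Your proposal is correct and follows essentially the same route as the paper: take the long exact sequence in $\Tor^S_\bullet(-,\kr)$ for the short exact sequence of the lemma, use the vanishing of $\Tor_i^S$ applied to the right-hand map to split it into short exact sequences in each internal degree, and then identify the Betti numbers of the extended ideals (and the $(-1)$ shift) with those of $L(n,\Pa)$ and $L(n-1,P)$. The paper's proof is just a terser version of this argument, writing down the resulting four-term exact sequence in degree $j$ directly.
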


\begin{proof}
The Betti number $\beta_{i,j}(L(n,P))$ equals $\Tor_i^S(L(n,P),\kr)_j$.
The above gives an exact sequence
\begin{align*} 
0 &  \pil   \Tor_i^S(L(n-1,P),\kr)_{j-1} \oplus \Tor_i^S(L(n,\Pa)^e, \kr)_j  &
 \pil \Tor_i^S(L(n,P),\kr)_j \\
&  \pil   \Tor_{i-1}^S(L(n,\Pa)^e)_{j-1} \pil 0, & 
\end{align*}
and the statement follows.
\end{proof}

\subsection{Posets with tree structure}
We now assume that the Hasse diagram of $P$ has the structure
of a rooted tree. So there is a unique minimal element $a$ in $P$,
and every interval $[p,q]$ in $P$ is a chain. We can then describe
completely the multigraded Betti numbers of $L(n,P)$.

\begin{proposition} Suppose the Hasse diagram of $P$ is a rooted tree. Let 
$R \sus [n] \times P$ be such that $R_1 \leq R_2 \leq \cdots \leq R_n$
and let $m_i$ be the cardinality of $\max(R_i)$ for $i = 1, \ldots, n-1$. 
Then this multidegree occurs only in the $p = (1 + \sum_{i=1}^{n-1} m_i)$-linear
strand, and the Betti number $\beta_{|R|-p,R}(L(n,P)) = 1$.
\end{proposition}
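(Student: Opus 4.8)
The plan is to invoke Theorem \ref{The:LPR:BettiLnP}, which reduces the computation of $\beta(R,t)$ to understanding each factor $\tH(X_i(R),t)$, together with the fact that $X_i(R)$ is homotopy equivalent to the suspension of the complex $Y_{i,\max}(R)$ built on the antichain $\max(R_i)$ as in Subsection \ref{Subsec:Top:Bip}. Since $R_1 \leq R_2 \leq \cdots \leq R_n$ we know each factor is nonzero, so the key claim to establish is that $Y_{i,\max}(R)$ is the boundary of the $(m_i-1)$-simplex on $\max(R_i)$, which would give $\tH(X_i(R),t) = t^{m_i}$, hence $\tH(\Delta(n,P)_{|R},t) = t^{-2}\prod_{i=1}^{n-1} t^{m_i+1}$, and therefore $\beta(R,t) = t^n \cdot t^{\sum (m_i+1) - (n-1)} \cdot \ldots$. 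Tracking the exponents carefully: $t\tH(X_i(R),t) = t\cdot t\tH(Y_{i,\max}(R),t)$, and $\tH(Y_{i,\max}(R),t) = t^{m_i-2}$ when $Y_{i,\max}(R)$ is the boundary of the $(m_i-1)$-simplex (since the reduced cohomology of $S^{m_i-2}$ sits in degree $m_i-2$). So $t\tH(X_i(R),t) = t^{m_i}$, the product over $i$ is $t^{\sum m_i}$, and $\beta(R,t) = t^n \cdot t^{\sum m_i - (n-1)} = t^{1 + \sum m_i} = t^p$. Thus the only nonzero Betti number in multidegree $R$ is $\beta_{|R|-p,R} = 1$, confirming both the linear strand and the value.

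The heart of the argument is therefore the claim that, when the Hasse diagram of $P$ is a rooted tree and $R_i \leq R_{i+1}$, the complex $Y_{i,\max}(R)$ is exactly the boundary of the simplex on $\max(R_i)$. Recall $Y_{i,\max}(R)$ consists of subsets $A \subseteq \max(R_i)$ such that some $b \in \min(R_{i+1})$ dominates no element of $A$. I would argue: the full set $\max(R_i)$ is \emph{not} a face, because $R_i \leq R_{i+1}$ forces every $b \in \min(R_{i+1})$ to dominate some $a \in \max(R_i)$. On the other hand, every proper subset $A \subsetneq \max(R_i)$ \emph{is} a face. Pick $a_0 \in \max(R_i) \setminus A$; since $R_i \leq R_{i+1}$, there is $b_0 \in \min(R_{i+1})$ with $a_0 \leq b_0$. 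I then need that $b_0$ dominates no element of $A$. This is where the tree hypothesis enters: if $b_0$ dominated both $a_0$ and some $a \in A$ with $a \neq a_0$, then $a_0$ and $a$ would both lie below $b_0$, so $a, a_0$ are comparable in the chain $\{p : p \leq b_0\}$ — but $a, a_0$ are distinct elements of the antichain $\max(R_i)$, a contradiction. (One must double-check that $\{p \in P : p \leq b_0\}$ is a chain: in a rooted tree, the order ideal below any element is exactly the chain from the root up to that element, since intervals $[a,q]$ are chains and there is a unique minimal element.) Hence $A \cup \{$ witness property via $b_0 \}$ shows $A \in Y_{i,\max}(R)$, so $Y_{i,\max}(R)$ is the boundary of the simplex on $\max(R_i)$, which is $\cong S^{m_i - 2}$.

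The main obstacle I anticipate is the degenerate case $m_i = 1$: then $\max(R_i)$ is a single element, its "boundary simplex" is $\{\emptyset\} = S^{-1}$, and one should check the exponent bookkeeping still gives $t\tH(X_i(R),t) = t^1 = t^{m_i}$, which it does since $\tH^{-1}(\{\emptyset\},\kr) = \kr$. A second point requiring care is that the Proposition as stated only claims $R$ occurs in a single linear strand with Betti number $1$; once we know $\beta(R,t) = t^p$ exactly, this is immediate, but I should make sure the "occurs only in" language is justified — i.e. that $\beta(R,t)$ is a \emph{monomial}, not merely that $t^p$ appears. This again follows because each $\tH(X_i(R),t)$ is the monomial $t^{m_i}$, so their product and hence $\beta(R,t)$ is a single monomial. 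Finally I would note explicitly where $R_i \leq R_{i+1}$ (rather than the full $R_1 \leq \cdots \leq R_n$) is used: it is exactly what guarantees $Y_{i,\max}(R)$ omits the top face and contains all proper faces, so the hypothesis in the statement is precisely what is needed.
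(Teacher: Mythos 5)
Your proposal is correct and follows essentially the same route as the paper: apply Theorem \ref{The:LPR:BettiLnP} and show, using that in a rooted tree the elements below any $b \in \min(R_{i+1})$ form a chain (so $b$ dominates at most one element of the antichain $\max(R_i)$, while $R_i \leq R_{i+1}$ supplies the needed edges), that each $Y_{i,\max}(R)$ is the boundary of the $(m_i-1)$-simplex, whence $\beta(R,t)=t^p$. The only blemish is the early slip ``$\tH(X_i(R),t)=t^{m_i}$'' (it should be $t^{m_i-1}$), which your subsequent careful exponent tracking corrects.
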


\begin{proof}
We apply Proposition \ref{The:LPR:BettiLnP}.
We will show that each $Y_{i,max}(R)$ is 
the boundary of an $(m_i-1)$-simplex. Then each $\tH(X_i(R))$ is 
a power $t^{m_i-1}$ and the result follows. 
But from each element of $\min(R_{i+1})$ there is at most one edge
in the bipartite graph whose edge ideal defines $X_i(R)$,
going to an element of $\max(R_i)$, due to the tree structure of $P$.
Since $R_i \leq R_{i+1}$ also every element of 
$\max(R_i)$ is incident to such an edge. Then the complex $Y_{i,\max}(R)$ 
on $\max(R_i)$ is simply the boundary of the simplex on $\max(R_i)$.
\end{proof}
 
\begin{example}
We compute the Betti table of $L(2,P)$ for the poset $P$ below.
\begin{center}
\begin{tikzpicture}[scale=1, vertices/.style={draw, fill=black, circle, inner sep=1pt}]
              \node [vertices, label=right:{$a$}] (0) at (-0+0,0){};
              \node [vertices, label=right:{$b$}] (5) at (-.75+0,1.33333){};
              \node [vertices, label=right:{$f$}] (1) at (-.75+1.5,1.33333){};
              \node [vertices, label=right:{$c$}] (6) at (-1.5+0,2.66667){};
              \node [vertices, label=right:{$e$}] (4) at (-1.5+1.5,2.66667){};
              \node [vertices, label=right:{$g$}] (2) at (-1.5+3,2.66667){};
              \node [vertices, label=right:{$d$}] (7) at (-.75+0,4){};
              \node [vertices, label=right:{$h$}] (3) at (-.75+1.5,4){};
      \foreach \to/\from in {0/5, 0/1, 5/4, 1/2, 2/3, 5/6, 6/7}
      \draw [-] (\to)--(\from);
\end{tikzpicture}
\end{center}
First we compute the Betti table of the subposet $P_1$,
\begin{center}
\begin{tikzpicture}[scale=1, vertices/.style={draw, fill=black, circle, inner sep=1pt}]
              \node [vertices, label=right:{$b$}] (0) at (-0+0,0){};
              \node [vertices, label=right:{$c$}] (1) at (-.75+0,1.33333){};
              \node [vertices, label=right:{$e$}] (3) at (-.75+1.5,1.33333){};
              \node [vertices, label=right:{$d$}] (2) at (-0+0,2.66667){};
      \foreach \to/\from in {0/1, 0/3, 1/2}
      \draw [-] (\to)--(\from);
\end{tikzpicture}
\end{center}
If we remove the point $b$, we end up with subposets $P(c,d)$:
\begin{tikzpicture}[scale=.5, vertices/.style={draw, fill=black, circle, inner sep=1pt}]
              \node [vertices, label=right:{$c$}] (0) at (-0+0,0){};
              \node [vertices, label=right:{$d$}] (1) at (-0+0,1.33333){};
      \foreach \to/\from in {0/1}
      \draw [-] (\to)--(\from);
\end{tikzpicture}
and $P(e)$:
\begin{tikzpicture}[scale=.5, vertices/.style={draw, fill=black, circle, inner sep=1pt}]
              \node [vertices, label=right:{$e$}] (0) at (-0+0,0){};
      \foreach \to/\from in {}
      \draw [-] (\to)--(\from);
\end{tikzpicture}.
The quotient rings by the ideals $L(2,P(c,d))$ and $L(2, P(e))$ have 
resolutions with Betti diagrams
\begin{small}
\begin{align*}
\begin{tabular}{r|ccc}
&0&1&2\\ \hline
\text{0}&1&\text{.}&\text{.}\cr
\text{1}&\text{.}&3&2\cr
\end{tabular}
\qquad
\begin{tabular}{r|cc}
&0&1\\ \hline
\text{0}&1&\text{.}\cr
\text{1}&\text{.}&1 \cr
\end{tabular}
\end{align*}
\end{small}
respectively. These are ideals in rings with distinct variables.
Hence the quotient ring by the ideal generated by the
sum of these ideals has a Betti diagram which is the ``tensor product''
of the above diagrams.

Therefore for the disjoint union of posets $P(c,d)$ and $P(e)$
the Betti diagram of its second letterplace ideal will be:
\begin{small}
\begin{align*}
\begin{tabular}{r|ccc}
&0&1&2\\ \hline
\text{2}&4&2&\text{.}\cr
\text{3}&\text{.}&3&2\cr
\end{tabular}
\end{align*}
\end{small}

Now by Corollary \ref{ColRecBetti}, we have
\begin{small}
\begin{align*}
\operatorname{Betti} (L(2,P_1)) 
& = 
\begin{tabular}{r|ccc}
&0&1&2\\ \hline
\text{2}&4&2&\text{.}\cr
\text{3}&\text{.}&3&2\cr
\end{tabular}
\quad
+
\quad
\begin{tabular}{r|cccc}
&0&1&2&3\\ \hline
\text{2}&\text{.}&4&2&\text{.}\cr
\text{3}&\text{.}&\text{.}&3&2\cr
\end{tabular}
\quad
+
\quad
\begin{tabular}{r|cccc}
&0&1&2&3\\ \hline
\text{2}&4&6&4&1\cr
\end{tabular} \\
& =
\begin{tabular}{r|cccc}
&0&1&2&3\\ \hline
\text{2}&8&12&6&1\cr
\text{3}&\text{.}&3&5&2\cr
\end{tabular}
\end{align*}
\end{small}
The Betti table of the letterplace ideal of the subposet
\begin{center}
\begin{tikzpicture}[scale=1, vertices/.style={draw, fill=black, circle, inner sep=1pt}]
              \node [vertices, label=right:{$b$}] (0) at (-.75+0,0){};
              \node [vertices, label=right:{$f$}] (4) at (-.75+1.5,0){};
              \node [vertices, label=right:{$c$}] (1) at (-1.5+0,1.33333){};
              \node [vertices, label=right:{$e$}] (3) at (-1.5+1.5,1.33333){};
              \node [vertices, label=right:{$g$}] (5) at (-1.5+3,1.33333){};
              \node [vertices, label=right:{$d$}] (2) at (-.75+0,2.66667){};
              \node [vertices, label=right:{$h$}] (6) at (-.75+1.5,2.66667){};
      \foreach \to/\from in {0/1, 0/3, 1/2, 4/5, 5/6}
      \draw [-] (\to)--(\from);
\end{tikzpicture}
\end{center}
is again obtained by taking the ``tensor product'' 
of the Betti tables of $L(2,P_1)$ and $L(2,3)$:
\begin{small}
\[
\begin{tabular}{r|ccccccc}
&0&1&2&3&4&5&6\\ \hline
\text{2}&14&20&9&1&\text{.}&\text{.}&\text{.}\cr
\text{3}&\text{.}&51&141&158&90&26&3\cr
\text{4}&\text{.}&\text{.}&18&54&61&31&6\cr
\end{tabular}
\]
\end{small}
Again by using Corollary \ref{ColRecBetti}, $\operatorname{Betti}(L(2,P))$ equals
\begin{small}
\begin{align*}
 & 
\begin{tabular}{r|ccccccc}
&0&1&2&3&4&5&6 \\ \hline
\text{2}&14&20&9&1&\text{.}&\text{.}&\text{.}\cr
\text{3}&\text{.}&51&141&158&90&26&3\cr
\text{4}&\text{.}&\text{.}&18&54&61&31&6\cr
\end{tabular}
\quad
+
\quad
\begin{tabular}{r|cccc cccc}
&0&1&2&3&4&5&6&7\\ \hline
\text{2}&\text{.}&14&20&9&1&\text{.}&\text{.}&\text{.}\cr
\text{3}&\text{.}&\text{.}&51&141&158&90&26&3\cr
\text{4}&\text{.}&\text{.}&\text{.}&18&54&61&31&6\cr
\end{tabular} \\
\\
+\quad &
\begin{tabular}{r|cccc cccc}
&0&1&2&3&4&5&6&7\\ \hline
\text{2}&8&28&56&70&56&28&8&1\cr
\end{tabular}\\
\\
= &
\begin{tabular}{r|cccccccc}
&0&1&2&3&4&5&6&7\\ \hline
\text{2}&22&62&85&80&57&28&8&1\cr
\text{3}&\text{.}&51&192&299&248&116&29&3\cr
\text{4}&\text{.}&\text{.}&18&72&115&92&37&6\cr
\end{tabular}
\end{align*}
\end{small}
\end{example}

\begin{example}
Using this procedure we find that the $n$'th letterplace ideal $L(n,V)$
of the poset $V$: 
\begin{tikzpicture}[scale=.5, vertices/.style={draw, fill=black, circle, inner sep=1pt}]
              \node [vertices, label=right:{$a$}] (0) at (-0+0,-1){};
              \node [vertices, label=right:{$b$}] (1) at (-0.8,0.5){};
              \node [vertices, label=right:{$c$}] (2) at (+0.8,0.5){};
      \foreach \to/\from in {0/1, 0/2}
      \draw [-] (\to)--(\from);
\end{tikzpicture}
has resolution:
\[\begin{tabular}{r|ccc}
&0&1&2\\ \hline
${n}$&$2n+1$&$2n+1$&$1$\cr
${n+1}$&\text{.}&$1$&$1$\cr
{\vdots}&\text{\vdots}&\vdots&\vdots\cr
${2n-1}$&\text{.}&$1$&$1$\cr
\end{tabular}
\]
In fact these letterplace ideals are initial ideals of 
the Pfaffians of a generic $(2n+1) \times (2n+1)$ skew-symmetric matrix,
by \cite[Thm. 5.1]{HeTr}. The variables $X_{i,2n+2-i}$ in loc.cit. correspond
to our variables $x_{a,i}$ for $i = 1, \ldots, n$, 
the variables $X_{i+1,2n+2-i}$ correspond to the $x_{b,i}$ and the 
$X_{i,2n+1-i}$ to the $x_{c,i}$.
\end{example}


\bibliographystyle{amsplain}
\bibliography{Bibliography}

\end{document}